\newtheorem{theorem}{Theorem}[section]
\newtheorem{corollary}[theorem]{Corollary}
\newtheorem{lemma}[theorem]{Lemma}
\newtheorem{proposition}[theorem]{Proposition}
\newtheorem{remark}[theorem]{Remark}
\newtheorem{conjecture}[theorem]{Conjecture}
\newtheorem{hypothesis}[theorem]{Hypothesis}
\def\N{\mathbb{N}}
\def\Z{\mathbb{Z}}
\def\R{\mathbb{R}}
\def\epsilon{\varepsilon}
\let\e=\varepsilon
\let\vp=\varphi
\let\vt=\vartheta
\let\t=\widetilde
\let\ol=\overline
\let\ul=\underline
\let\mc=\mathcal
\def\O{\Omega}
\def\W{\mc{W}}
\def\hat{\widehat}
\def\tilde{\widetilde}
\DeclareMathOperator{\dist}{dist}
\DeclareMathOperator\supp{supp}
\def\thm#1{Theorem~\ref{thm:#1}}
\def\seq#1{(#1_n)_{n\in\N}}
\def\as{\quad\text{as }\;}
\def\tilde{\widetilde}
\def\1{\mathbbm{1}}
\def\Sph{\mathbb{S}^{N-1}}
\newenvironment{formula}[1]{\begin{equation}\label{#1}}{\end{equation}\noindent}
\def\Fi#1{\begin{formula}{#1}}
\def\Ff{\end{formula}\noindent}
\newcommand{\be}{\begin{equation}}
\newcommand{\ee}{\end{equation}}
\newcommand{\baa}{\begin{array}}
\newcommand{\eaa}{\end{array}}
\newcommand{\ba}{\begin{eqnarray}}
\newcommand{\ea}{\end{eqnarray}}
\numberwithin{equation}{section}
\begin{document}
\date{}
\title{\bf{Spreading, flattening and logarithmic lag\\ for reaction-diffusion equations in $\R^N$:\\
old and new results}}
\author{Fran\c cois Hamel$^{\hbox{\small{ a}}}$ and Luca Rossi$^{\hbox{\small{ b,c }}}$\thanks{This work has received funding from Excellence Initiative of Aix-Marseille Universit\'e~-~A*MIDEX, a French ``Investissements d'Avenir'' programme, and from the French ANR RESISTE (ANR-18-CE45-0019) and ReaCh (ANR-23-CE40-0023-02) projects. The first author is grateful to the hospitality of Universit\`a degli Studi di Roma La Sapienza, where part of this work was done. The authors are grateful to the anonymous reviewers for their valuable comments and suggestions.}\\
\\
\footnotesize{$^{\hbox{a }}$Aix Marseille Univ, CNRS, I2M, Marseille, France}\\
\footnotesize{$^{\hbox{b }}$SAPIENZA Univ Roma, Istituto ``G.~Castelnuovo'', Roma, Italy}\\
\footnotesize{$^{\hbox{c }}$CNRS, EHESS, CAMS, Paris, France}\\
}
\maketitle
	
\begin{abstract}
\noindent{}This paper is concerned with the large-time dynamics of bounded solutions of reaction-diffusion equations with bounded or unbounded initial support in $\R^N$. We~start with a survey of some old and recent results on the spreading speeds of the solutions and their asymptotic local one-dimensional symmetry. We then derive some flattening properties of the level sets of the solutions if initially supported on subgraphs. We also investigate the special case of asymptotically conical-shaped initial conditions. Lastly, we reclaim some known results about the logarithmic lag between the position of the solutions and that of planar or spherical fronts expan\-ding with minimal speed, for almost-planar or compactly supported initial conditions. We~then prove some new logarithmic-in-time estimates of the lag of the position of the solutions with respect to that of a planar front, for initial conditions which are supported on subgraphs with logarithmic growth at infinity. These estimates entail in particular  that the same lag as for compactly supported initial data holds true for a class of unbounded initial supports. The paper also contains some related conjectures and open problems.
\vskip 2pt
\noindent{\small{\it{Keywords}}: Reaction-diffusion equations; traveling fronts; flattening; logarithmic gap.}
\vskip2pt
\noindent{\small{\it{Mathematics Subject Classification}}: 35B06; 35B30; 35B40; 35C07; 35K57.}
\end{abstract}
	
\tableofcontents
	
		
\section{Introduction}\label{sec1}


\subsection{General framework}
	
In this paper, we are interested in the large-time dynamics of solutions of the Cauchy problem for the reaction-diffusion equation
\Fi{homo}
\partial_t u=\Delta u+f(u),\quad t>0,\ x\in\R^N,
\Ff
in any dimension $N\ge1$. The reaction term~$f:[0,1]\to\R$ is given, of class $C^1([0,1])$, with
$$f(0)=f(1)=0.$$
For mathematical convenience, we extend~$f$ by~$0$ in~$\R\setminus[0,1]$, and the extended function, still denoted $f$, is then Lipschitz continuous in $\R$.

The initial conditions $u_0$ are mostly assumed to be indicator functions of sets~$U$:
\be\label{defu0}
u_0(x)=\1_U(x):=\left\{\baa{ll}1&\hbox{if }x\in U,\vspace{3pt}\\ 0&\hbox{if }x\in\R^N\!\setminus\!U,\eaa\right.
\ee
where the ``initial support $U$"\footnote{\ We use the term ``initial support $U$", with an abuse of notation, to refer to the set~$U$ in the definition~\eqref{defu0} of the initial condition $u_0$. This set $U$ differs in general from the usual support $\supp(u_0)$ of $u_0$, which is defined as the complement of the largest open set of $\R^N$ where $u_0$ is equal to $0$ almost everywhere with respect to the Lebesgue measure. However, $U=\supp u_0$ if and only if $U$ is closed and the intersection of $U$ with any non-trivial ball centered at any point of $U$ has a positive Lebesgue measure.} is typically an unbounded measurable subset of $\R^N$ (although some results also deal with bounded sets $U$). Given $u_0$, there is then a unique bounded classical solution $u:(0,+\infty)\times\R^N\to[0,1]$ of~\eqref{homo} such that~$u(t,\cdot)\to u_0$ as~$t\to0^+$ in~$L^1_{loc}(\R^N)$.  Instead of initial conditions $u_0=\1_U$, more general initial conditions $0\leq u_0\leq 1$ could have been considered, whose upper level set $\{x\in\R^N:u_0(x)\geq\theta\}$ lies at bounded Hausdorff distance from $\supp(u_0)$, where $\theta\in(0,1)$ is a suitable value depending on $f$. For the sake of simplicity of the presentation and readability of the paper, we kept the assumption~$u_0=\1_U$ in the main new results, all the more as this case already gives rise to many interesting and non-trivial results.

From the strong parabolic maximum principle, the solution $u$ of~\eqref{homo}-\eqref{defu0} satisfies
$$0<u<1\ \hbox{ in }(0,+\infty)\times\R^N,$$
provided the Lebesgue measures of $U$ and~$\R^N\setminus U$ are positive. However, from parabolic estimates, at each time $t>0$, $u$ stays close to $1$ or $0$ in subregions of $U$ or $\R^N\setminus U$ that are far away from $\partial U$.


\subsection{Main questions and outline of the paper}

The main goal of the paper is to discuss the large-time properties of the level sets of the solutions $u$. For $\lambda\in(0,1)$, the level set of $u$ at time $t>0$ with level $\lambda$ is defined by
\be\label{defElambda0}
E_\lambda(t):=\big\{x\in\R^N:u(t,x)=\lambda\},
\ee
and its upper level set is given by
\be\label{defFlambda}
F_\lambda(t):=\big\{x\in\R^N:u(t,x)>\lambda\}.
\ee
We are then interested in the properties of $E_\lambda(t)$ and $F_\lambda(t)$ as $t\to+\infty$. More precisely, we want to know where these level sets or upper level sets are located and how they look~like.

A typical question is the existence of a {\em spreading speed}, which, for a given vector $e\in\R^N$ with unit Euclidean norm (that is, $|e|=1$), is a quantity~$w(e)>0$ such that
\be\label{spreading0}\left\{\begin{array}{lll}
u(t,cte)\to1 & \hbox{as $t\to+\infty$} & \hbox{for every $0\le c<w(e)$},\vspace{3pt}\\
u(t,cte)\to0 & \hbox{as $t\to+\infty$} & \hbox{for every $c>w(e)$.}\end{array}\right.
\ee
Provided that $w(e)$ is finite, that would imply that
$$\lim_{t\to+\infty}\Big(\min_{x\in E_\lambda(t)\cap\R^+e}\frac{x\cdot e}{t}\Big)=\lim_{t\to+\infty}\Big(\max_{x\in E_\lambda(t)\cap\R^+e}\frac{x\cdot e}{t}\Big)=w(e)$$
for every $\lambda\in(0,1)$, meaning that all level sets with values in $(0,1)$ move with asymptotic speed $w(e)$ in the direction $e$, where we set $\R^+e:=\{\tau e:\tau>0\}$ with $\R^+:=(0,+\infty)$. It follows that if $w(e)$ exists and is finite in every direction $e$, then, for any $\lambda\in(0,1)$, the rescaled upper level sets $t^{-1}F_\lambda(t)$ approach as $t\to+\infty$, in a suitable sense, the envelop set of the spreading speeds $w(e)$, i.e., the set
\be\label{W}
\W:=\big\{re\,:\,e\in\R^N,\ |e|=1,\ \ 0\leq r<w(e)\big\}.
\ee
This issue and related problems are discussed in Sections~\ref{sec23} and~\ref{sec24}. In particular, in these sections, we recall some old and recent results, including those of~\cite{HR1}, on the existence and characterization, by Freidlin-G\"artner type formulas, of the spreading speeds $w(e)$ and the spreading sets $\W$ for some classes of reactions $f$ and initial conditions, either with bounded or unbounded supports.

Next, once the spreading speed is shown to exist, what can it be said about the possible {\em gaps} between the actual position of the level set $E_\lambda(t)$ and the dilated envelop set $t\mc{W}$ along the ray $\R^+e$, that is, how to estimate
\be\label{gaps}
\min_{x\in E_\lambda(t)\cap\R^+e}\big(x\cdot e-w(e)t\big)\ \hbox{ and }\ \max_{x\in E_\lambda(t)\cap\R^+e}\big(x\cdot e-w(e)t\big)
\ee
as $t\to+\infty$? They are $o(t)$ as $t\to+\infty$ if~\eqref{spreading0} holds and $w(e)$ is finite, but can we say more? Known results related to this issue for solutions with compact or almost-planar initial support are recalled in Section~\ref{seclag1}. New results on the lag~\eqref{gaps} for solutions initially supported on subgraphs having logarithmic growth at infinity, with reactions $f$ of the KPP type~\eqref{kpp0} or~\eqref{fkpp} below, are presented in Section~\ref{seclag2}, with proofs of the new statements given in Section~\ref{sec6}.

Another typical question is to know whether, for $\lambda\in(0,1)$ and for a sequence $(t_n,x_n)_{n\in\N}$ in $(0,+\infty)\times\R^N$ such that $x_n\in E_\lambda(t_n)$ and $t_n\to+\infty$ as $n\to+\infty$, the level sets $E_\lambda(t_n)$ become asymptotically locally flat around $x_n$ as $n\to+\infty$. By that we mean that there is a family of affine hyperplanes  $(H_n)_{n\in\N}$ in $\R^N$ such that, for any $R>0$, the Hausdorff distance between $E_\lambda(t_n)\cap B_R(x_n)$ and $H_n\cap B_R(x_n)$ tends to $0$ as $n\to+\infty$ (see the notations and the definition below for the open Euclidean balls $B_r(x)$ and the Hausdorff distance). If yes, we then speak of {\em local flattening} of the level sets. Known results of~\cite{J} on this topic and the related notion of asymptotic local one-dimensional symmetry for solutions with general reactions $f$ and bounded initial supports, and of~\cite{HR2} for solutions with KPP type reactions~\eqref{fkpp} and unbounded convex-like initial supports, are recalled in Section~\ref{sec25}. New results for more general reactions $f$ and solutions initially supported on subgraphs are stated in Section~\ref{sec3}. The proofs of the new results are carried out in Section~\ref{sec5}.

The answer to the above questions shall strongly depend on the given function $f$, on the initial support $U$ of $u$, as well as on the dimension $N$. We first presenting and discuss some standard hypotheses in Sections~\ref{sec21} and~\ref{sec22}. 
 

\subsection{Some notations}

Throughout the paper, ``$|\ |$'' and ``$\ \cdot\ $'' denote respectively the Euclidean norm and inner product in $\R^N$,
$$B_r(x):=\{y\in\R^N:|y-x|<r\}$$
is the open Euclidean ball of center $x\in\R^N$ and radius $r>0$, $B_r:=B_r(0)$, and $\Sph:=\{e\in\R^N:|e|=1\}$ is the unit Euclidean sphere of~$\R^N$. The distance of a point $x\in\R^N$ to a set $A\subset\R^N$ is given by
$$\dist(x,A):=\inf\big\{|y-x|:y\in A\big\},$$
with the convention $\dist(x,\emptyset)=+\infty$. The Hausdorff distance between two subsets $A,B\subset\R^N$ is given by
$$d_{\mc{H}}(A,B):=\max\Big(\sup_{x\in A}\dist(x,B),\,\sup_{y\in B}\dist(y,A)\Big),$$
with the conventions that $d_{\mc{H}}(A,\emptyset)=d_{\mc{H}}(\emptyset,A)=+\infty$ if $A\neq\emptyset$ and $d_{\mc{H}}(\emptyset,\emptyset)=0$. 

For $x\in\R^n\setminus\{0\}$, we set
$$\hat{x}:=\frac{x}{|x|}.$$
Lastly, we call $(\mathrm{e}_1,\cdots,\mathrm{e}_N)$ the canonical basis of $\R^N$, that is,
$$\mathrm{e}_i=(0,\cdots,0,1,0,\cdots,0)$$
for $1\le i\le N$, where $1$ is the $i$-th coordinate of $\mathrm{e}_i$.


\section{Standard hypotheses and known results}\label{sec2}

Before stating the main new results in Sections~\ref{sec3} and~\ref{sec4}, we introduce some important hypotheses which are satisfied in standard situations. The hypotheses are expressed in terms of the solutions of~\eqref{homo} with more general initial conditions than indicator functions, or are expressed in terms of the function $f$ solely. We then discuss the logical link between these hypotheses and we review some known spreading and flattening results for compactly supported or more general initial data.


\subsection{Invasion property}\label{sec21}

Both $0$ and $1$ are steady states of~\eqref{homo}, since $f(0)=f(1)=0$, and we consider a non-symmetric situation in which, say, the state $1$ is more attractive than $0$, in the sense that it attracts the solutions of~\eqref{homo} --~not necessarily satisfying~\eqref{defu0}~-- that are ``large enough" in large balls at initial~time. 

\begin{hypothesis}\label{hyp:invasion}
The {\rm invasion property} occurs for any solution $u$ of~\eqref{homo} with a ``large enough'' initial datum $u_0$, that~is, there exist $\theta\in(0,1)$ and $\rho>0$ such that if
\be\label{hyptheta}
\theta\,\1_{B_\rho(x_0)}\le u_0\le1\ \hbox{ in }\R^N,
\ee
for some $x_0\in\R^N$, then $u(t,x)\to1$ as $t\to+\infty$, locally uniformly with respect to $x\in\R^N$. We then say that $u$ is an {\rm invading solution}.
\end{hypothesis}
	
If $f$ is such that
\Fi{HTconditions}
f>0 \text{ \ in $(0,1)$\quad and \quad }\liminf_{s\to0^+}\frac{f(s)}{s^{1+2/N}}>0,
\Ff 
then Hypothesis~\ref{hyp:invasion} is satisfied with any $\theta\in(0,1)$ and $\rho>0$, and this property is known as the ``hair trigger effect'', see~\cite{AW}. If $f>0$ in~$(0,1)$ (without any further assumption on the behavior of $f$ at $0^+$), then Hypothesis~\ref{hyp:invasion} still holds with any $\theta\in(0,1)$, and with $\rho>0$ large enough (depending on $\theta$). Actually, Hypothesis~\ref{hyp:invasion} holds as well if~$f$ is of the ignition type, that~is,
\be\label{ignition}
\exists\,\alpha\in(0,1),\ \ f=0\hbox{ in $[0,\alpha]$ and $f>0$ in $(\alpha,1)$},
\ee
and $\theta$ in Hypothesis~\ref{hyp:invasion} can be any real number in the interval $(\alpha,1)$, provided $\rho>0$ is large enough (depending on $\theta$). For a bistable function $f$, i.e.
\be\label{bistable}
\exists\,\alpha\in(0,1),\ \ f<0\hbox{ in }(0,\alpha)\hbox{ and }f>0\hbox{ in }(\alpha,1),
\ee
Hypothesis~\ref{hyp:invasion} is fulfilled if and only if $\int_0^1f(s)\,ds>0$, see~\cite{AW,FM}, and in that case $\theta$ can be any real number in $(\alpha,1)$, provided $\rho>0$ is large enough (depending on $\theta$). Notice that the validity of Hypothesis~\ref{hyp:invasion} and the choice of $\theta$ when $f$ is of the ignition type or is just positive in $(0,1)$ can be viewed as a consequence of the aforementioned property in the bistable case and of the comparison principle, by putting below $f$ a suitable bistable function with positive integral over $[0,1]$. For a tristable function $f$, namely
\be\label{tristable}
\exists\,0<\alpha<\beta<\gamma<1,\quad f<0\hbox{ in $(0,\alpha)\cup(\beta,\gamma)\ $ and $\ f>0$ in $(\alpha,\beta)\cup(\gamma,1)$},
\ee
then it follows from~\cite{FM} that Hypothesis~\ref{hyp:invasion} is fulfilled if and only if both integrals $\int_{\beta}^1f$ and $\int_0^1f$ are positive, and, for such a function, these positivity conditions are in turn equivalent to the positivity of $\int_t^1f$ for every $t\in[0,1)$.

More generally speaking, it actually turns out from~\cite{DP,P3} that Hypothesis~\ref{hyp:invasion} is equivalent to the following two simple simultaneous conditions on the function~$f$:
\be\label{hyp:theta}
\exists\,\theta\in(0,1),\quad f>0\hbox{ in }[\theta,1),
\ee
and
\be\label{intf}
\forall\,t\in[0,1),\quad\int_t^1f(s)\,ds>0.
\ee
Furthermore, $\theta$ can be chosen as the same real number in Hypothesis~$\ref{hyp:invasion}$ and in~\eqref{hyp:theta}. More precisely, the fact that Hypothesis~\ref{hyp:invasion} implies~\eqref{hyp:theta}-\eqref{intf} follows from~\cite[Proposition~2.12]{P3}, while the converse implication follows from~\cite[Lemma~2.4]{DP}. In particular, Hypothesis~\ref{hyp:invasion} is satisfied if $f\ge0$ in $[0,1]$ and if condition~\eqref{hyp:theta} holds. Condition~\eqref{hyp:theta} alone is not enough to guarantee Hypothesis~\ref{hyp:invasion}, as shown by bistable functions $f$ of the type~\eqref{bistable} with~$\int_0^1f\le0$. Similarly, condition~\eqref{intf} alone is not enough to guarantee Hypothesis~\ref{hyp:invasion}, since there are $C^1([0,1])$ functions $f$ which vanish at $0$ and $1$ and satisfy~\eqref{intf} but not~\eqref{hyp:theta}: consider for instance $f$ defined by $f(1)=0$ and $f(s)=s(1-s)^3\sin^2(1/(1-s))$ for $s\in[0,1)$.

It also follows from the equivalence between conditions~\eqref{hyp:theta}-\eqref{intf} and Hypothesis~\ref{hyp:invasion} that the latter is independent of the dimension $N$. On the other hand, for a function $f$ which is positive in $(0,1)$, the validity of the hair trigger effect (that is, the arbitrariness of $\theta\in(0,1)$ and $\rho>0$ in Hypothesis~\ref{hyp:invasion}) does depend on~$N$. For instance, for the function $f(s)=s^p(1-s)$ with $p\ge1$, Hypothesis~\ref{hyp:invasion} holds in any dimension $N\ge1$, but the hair trigger effect holds if and only if $p\le1+2/N$, see~\cite{AW}.


\subsection{Planar traveling fronts}\label{sec22}

In the large-time dynamics of bounded solutions of the reaction-diffusion equation~\eqref{homo}, a crucial role is played by particular solutions, called {\em planar traveling fronts}, which connect the steady states $1$ and $0$. These are solutions of the form
$$u(t,x)=\vp(x\cdot e-ct)$$
with $c\in\R$, $e\in\Sph$, and
\be\label{limitsvp}
0=\vp(+\infty)<\vp(z)<\vp(-\infty)=1\quad \text{for all $z\in\R$}.
\ee
The level sets of these solutions are parallel hyperplanes orthogonal to $e$ traveling with the constant speed $c$ in the direction~$e$. If a planar traveling front solution exists, its profile $\vp$ solves the ODE
$$\vp''+c\vp'+f(\vp)=0\ \hbox{ in $\R$},$$
and it is necessarily decreasing and unique up to shifts, for a given speed~$c$, see e.g.~\cite{HR1}.

The second main hypothesis used in the present paper is concerned with the existence of planar traveling front solutions connecting $1$ to $0$.

\begin{hypothesis}\label{hyp:minimalspeed}
For any direction $e\in\Sph$, equation~\eqref{homo} admits a planar traveling front solution $u(t,x)=\vp(x\cdot e-c_0t)$ connecting $1$ to~$0$ in the sense of~\eqref{limitsvp}, with positive speed $c_0>0$.
\end{hypothesis}

Notice that Hypothesis~\ref{hyp:minimalspeed} is equivalent to the existence of a traveling front solution $\vp(x-c_0t)$ connecting $1$ to $0$ with $c_0>0$ for the one-dimensional version of~\eqref{homo}. Hypothesis~\ref{hyp:minimalspeed} thus depends on the function $f$ only, and not on the dimension $N$, as it is the case for Hypothesis~\ref{hyp:invasion}. Hypothesis~\ref{hyp:minimalspeed} is fulfilled for instance if $f>0$ in $(0,1)$, or if $f$ is of the ignition type~\eqref{ignition}, or if $f$ is of the bistable type~\eqref{bistable} with $\int_0^1f(s)\,ds>0$ (in the last two cases, the speed $c_0$ is unique), see~\cite{AW,FM,F,KPP}. Hypothesis~\ref{hyp:minimalspeed} is also satisfied for some functions~$f$ having multiple oscillations in the interval $[0,1]$. For instance, for a tristable function $f$ satisfying~\eqref{tristable}, there exist unique speeds $c_1$ and $c_2$ of one-dimensional fronts $\varphi_1(x-c_1t)$ and $\varphi_2(x-c_2t)$ such that
$$0=\varphi_1(+\infty)<\varphi_1(z)<\varphi_1(-\infty)=\beta$$
and
$$\beta=\varphi_2(+\infty)<\varphi_2(z)<\varphi_2(-\infty)=1$$
for all $z\in\R$, and Hypothesis~\ref{hyp:minimalspeed} is fulfilled if and only if $c_1<c_2$ and $\int_0^1f(s)\,ds>0$, see~\cite{FM} (furthermore, in that case, $c_0$ is unique and $c_1<c_0<c_2$). It also follows from~\cite[Proposition~1.1]{DLL} that Hypothesis~\ref{hyp:minimalspeed} is satisfied if the following condition holds: $\int_0^1f(s)ds>0$, and $f(s)>0$ for all $s\in(0,1)$ such that $\int_0^sf(\tau)d\tau>0$. In particular, for a tristable function $f$ of the type~\eqref{tristable}, the latter condition means that $\int_0^1f(s)ds>0$ and $\int_0^\beta f(s)ds\le0$ (hence, $\int_\beta^1f(s)ds>0$), which in turn yields $c_1\le 0<c_2$, and Hypothesis~\ref{hyp:minimalspeed} is thus well fulfilled.

As a matter of fact, it turns out that Hypothesis~\ref{hyp:minimalspeed} is equivalent to the existence of a positive minimal speed $c^*$ of traveling fronts connecting $1$ to $0$, that is, the existence of $c^*>0$ such that~\eqref{homo} in $\R$ admits a solution of the form $\vp^*(x-c^*t)$ satisfying~\eqref{limitsvp} with~$\vp^*$ instead of $\vp$, and it does not admit any solution of the same type with $c\in(-\infty,c^*)$ instead of $c^*$ (thus, necessarily, $c^*\le c_0$), see~\cite[Lemma~3.5]{HR1}. If $f>0$ in $(0,1)$, then the set of admissible speeds of planar traveling fronts connecting $1$ to $0$ is  equal to the whole interval $[c^*,+\infty)$, and moreover $c^*\ge2\sqrt{f'(0)}$ with equality if (but not only if) $f$ further satisfies the so-called KPP condition: $f(s)\le f'(0)s$ for all $s\in[0,1]$, see~\cite{AW,FM,F,KPP}. On the other hand, if $f$ is of the ignition type~\eqref{ignition}, or if $f$ is of the bistable type~\eqref{bistable} with $\int_0^1f(s)\,ds>0$, then $c^*=c_0$, since $c_0$ is unique in these two cases.


\subsection{Known spreading results for localized or general initial data}\label{sec23}

In this section, we first establish the link between Hypotheses~\ref{hyp:invasion} and~\ref{hyp:minimalspeed} and we recall some classical results on the spreading of solutions with initial bounded supports. We then present some recent results of~\cite{HR1} on the spreading speeds and spreading sets of solutions with general initial supports, and we finish with some counter-examples to the estimates.

\subsubsection{Relationship between Hypotheses~\ref{hyp:invasion} and~\ref{hyp:minimalspeed}, and spreading speeds for solutions with bounded initial supports}

Hypothesis~\ref{hyp:invasion} is concerned with a property satisfied by the solutions of the Cauchy pro\-blem~\eqref{homo} with large enough initial conditions, whereas Hypothesis~\ref{hyp:minimalspeed} is related to the existence of some special entire (defined for all times $t\in\R$) solutions of~\eqref{homo} having flat level sets moving with constant speed. It is therefore not clear to see how these two pro\-perties could be related. However, it turns out that Hypothesis~\ref{hyp:minimalspeed} implies Hypothesis~\ref{hyp:invasion}, as follows from~\cite[Lemma~3.4]{HR1} together with~\cite[Lemma~2.4]{DP}. This implication can also be derived from \cite[Theorem~1.5]{DR} under the additional condition that there is~$\delta>0$ such that $f$ is nonincreasing in~$[0,\delta]$ and in~$[1-\delta,1]$. Furthermore, under Hypothesis~\ref{hyp:minimalspeed}, the following properties hold:
any solution $u$ as in Hypothesis~\ref{hyp:invasion} satisfies
\be\label{c<c*}
\forall\,c\in[0,c^*),\quad\min_{|x|\le ct}u(t,x)\to1\as t\to+\infty,
\ee
while, if the initial datum $u_0$ is compactly supported, then
\Fi{c>c*}\forall\,c>c^*,\quad
\sup_{|x|\geq ct}u(t,x)\to0\as t\to+\infty,
\Ff
where $c^*>0$ is the minimal speed of planar traveling fronts connecting $1$ to $0$ (given in the last paragraph of Subsection~\ref{sec22}), see~\cite[Proposition~1.3]{HR1}. These properties imply that, under Hypothesis~\ref{hyp:minimalspeed}, solutions with compactly supported, but large enough, initial data have a spreading speed $w(e)$ in any direction $e\in\Sph$, in the sense of~\eqref{spreading0}, and moreover $w(e)=c^*$ for all $e\in\Sph$. This answers the first question mentioned in Section~\ref{sec1}. But we point out that the properties~\eqref{c<c*}-\eqref{c>c*} are stronger than~\eqref{spreading0}, in that they include a uniformity with respect to the directions and with respect to the speeds smaller or larger than $c^*\pm\varepsilon$ for any $\varepsilon>0$ small enough. The properties~\eqref{c<c*}-\eqref{c>c*}, which hold under Hypothesis~\ref{hyp:minimalspeed}, can be viewed as a natural extension of some results of the seminal paper~\cite{AW}, which were originally obtained under more specific assumptions on $f$, especially of the type~\eqref{HTconditions},~\eqref{ignition}, or~\eqref{bistable} with $\int_0^1f(s)ds>0$.

Whereas Hypothesis~\ref{hyp:minimalspeed} implies Hypothesis~\ref{hyp:invasion}, the converse implication is false in general. For instance, consider equation~\eqref{homo} in dimension $N=1$ with a tristable function~$f$ satisfying~\eqref{tristable} and such that $\int_0^\beta f>0$ and $\int_\beta^1f>0$, and let $c_1$ and $c_2$ be the unique (positive) speeds of the traveling fronts $\vp_1(x-c_1t)$ and $\vp_2(x-c_2t)$ connecting $\beta$~to~$0$, and~$1$ to~$\beta$, respectively. It follows from~\cite{FM} that, if~$c_1\ge c_2$, then Hypothesis~\ref{hyp:minimalspeed} is not satisfied, while Hypothesis~\ref{hyp:invasion} is, from~\cite{DP,FM}. Furthermore, if $c_1>c_2$, then the invading solutions $u$ emanating from compactly supported initial conditions $u_0$ as in Hypothesis~\ref{hyp:invasion} develop into a terrace of two expanding fronts with speeds~$c_1$ and~$c_2$, in the sense that
\be\label{terrasse}\left\{\begin{array}{lll}
\displaystyle\inf_{B_{ct}}u(t,\cdot)\to1 & \hbox{as $t\to+\infty$} & \hbox{if $0<c<c_2$},\vspace{3pt}\\
\displaystyle\sup_{B_{c''t}\setminus B_{c't}}|u(t,\cdot)-\beta|\to0 & \hbox{as $t\to+\infty$} & \hbox{if $c_2<c'<c''<c_1$},\vspace{3pt}\\
\displaystyle\sup_{\R^N\setminus B_{ct}}u(t,\cdot)\to0 & \hbox{as $t\to+\infty$} & \hbox{if $c>c_1$},\end{array}\right.
\ee
see~\cite{DM2,FM}. In particular, the existence of $w(e)$ satisfying~\eqref{spreading0} fails in that case. We refer to~\cite{DM2,DGM,GR,P2,P3} for more results on propagating terraces in more general frameworks. We also point out that, under Hypothesis~\ref{hyp:minimalspeed}, properties~\eqref{c<c*}-\eqref{c>c*} give the exact spreading speed of the solutions $u$ with compactly supported and large enough initial conditions. However, under the sole Hypothesis~\ref{hyp:invasion}, property~\eqref{c<c*} is still fulfilled, for a certain positive speed $c^*$ (which nevertheless may not be any speed of a traveling front solution connecting~$1$ to~$0$): indeed, if $v$ denotes the solution to~\eqref{homo} with initial condition $v_0:=\theta\,\1_{B_\rho(x_0)}\le u_0$, then $v(t,\cdot)\to1$ as $t\to+\infty$ locally uniformly in $\R^N$ by Hypothesis~\ref{hyp:invasion} and there exists $T>0$ such that
$$1\ge u(T,\cdot+y)\ge v(T,\cdot+y)\ge v_0\hbox{ in $\R^N$ for every~$|y|\leq1$},$$
whence $1\ge u(kT+t,\cdot+ky)\ge v(kT+t,\cdot+ky)\ge v(t,\cdot)$ in $\R^N$ for all $k\in\N$, $t\geq0$, and $|y|\le1$ by immediate induction. This entails~\eqref{c<c*} with $c^*:=1/T$.

\subsubsection{Spreading speeds and spreading sets for solutions with general unbounded initial supports}

While~\eqref{c<c*}-\eqref{c>c*} give, under Hypothesis~\ref{hyp:minimalspeed}, the existence and characterization of the spreading speed $w(e)=c^*$ for any $e\in\Sph$ in the sense of~\eqref{spreading0} for the solutions  of~\eqref{homo} with compactly supported and large enough initial data, the situation is much more intricate when the initial condition $u_0=\1_U$ in~\eqref{defu0} has an unbounded initial support $U$. We already know that, if $U$ contains a ball of radius $\rho$, with $\rho>0$ given by Hypothesis~\ref{hyp:invasion} (following from Hypothesis~\ref{hyp:minimalspeed}), then the spreading speed in a direction $e\in\Sph$, if any, in the sense of~\eqref{spreading0}, necessarily satisfies $w(e)\ge c^*$, thanks to~\eqref{c<c*} and the comparison principle. To show the existence and provide formulas of the spreading speeds for an arbitrary set~$U$, we introduced in~\cite{HR1} some notions of sets of directions ``around which~$U$ is bounded'' and ``around which~$U$ is unbounded'', respectively defined as follows:
$$\left\{\begin{array}{l}
\displaystyle\mc{B}(U):=\Big\{\xi\in\Sph:\liminf_{\tau\to+\infty}\frac{\dist(\tau\xi,U)}{\tau}>0\Big\},\vspace{3pt}\\
\displaystyle\mc{U}(U):=\Big\{\xi\in\Sph:\lim_{\tau\to+\infty}\frac{\dist(\tau\xi,U)}{\tau}=0\Big\}.\end{array}\right.$$
These sets are respectively open and closed relatively to $\Sph$. A direction $\xi$ belongs to~$\mc{B}(U)$ if and only if there is an open cone $\mc{C}$ containing the ray~$\R^+\xi$ such that $U\cap\mc{C}$ is bounded. On the other hand, $\xi\in\mc{U}(U)$ if $\R^+\xi\setminus U$ is bounded and only if for any open cone~$\mc{C}$ containing the ray~$\R^+\xi$, the set $U\cap\mc{C}$ is unbounded. For any $\delta>0$, we denote
$$U_\delta:=\big\{x\in U:\dist(x,\partial U)\ge\delta\big\}.$$
One of the main results of~\cite{HR1} is that, under Hypothesis~\ref{hyp:minimalspeed} (which implies that Hypo\-thesis~\ref{hyp:invasion} holds for some $\rho>0$), if $U_\rho\neq\emptyset$ and
\Fi{hyp:U}
\mc{B}(U)\cup\,\mc{U}(U_\rho)=\Sph,
\Ff
then the solution $u$ of~\eqref{homo}-\eqref{defu0} admits a continuous (from $\Sph$ to $[c^*,+\infty]$) family of spreading speeds $e\mapsto w(e)$ in the sense of~\eqref{spreading0}, and even in the uniform sense
\Fi{ass-cpt}\left\{\baa{lll}
\displaystyle\lim_{t\to+\infty}\,\Big(\min_{x\in C}u(t, tx)\Big)\!&\! =1 & \text{if }C\subset\mc{W},\vspace{3pt}\\
\displaystyle\lim_{t\to+\infty}\,\Big(\max_{x\in C}u(t,tx)\Big)\!&\!=0 & \text{if }C\subset\R^N\setminus\ol{\mc{W}}\eaa\right.
\Ff
for any compact set $C\subset\R^N$, with $\W$ being the envelop set of $w(e)$ as defined in~\eqref{W}. In~addition, the set $\W$ is explicitly given by
\be\label{asspre}
\W=\R^+\, \mc{U}(U)\,+\,B_{c^*}
\ee
(with the convention $\emptyset+B_{c^*}:=B_{c^*}$), which implies that the spreading speed $w(e)$ in any direction $e$ is given by the equivalent formulas
\Fi{FGgeneral}
w(e)=\sup_{\xi\in\mc{U}(U),\ \xi\.e\ge0}\frac{c^*}{\sqrt{1-(\xi\.e)^2}}=\frac{c^*}{\dist(e,\R^+\, \mc{U}(U))}\ \in[c^*,+\infty],
\Ff
with the conventions $w(e)=c^*$ if there is no $\xi\in\mc{U}(U)$ such that $\xi\cdot e\ge0$, and $c^*/0=+\infty$ (in particular, $w(e)=c^*$ if $\mc{U}(U)=\emptyset$). The above results are contained in~\cite[{Theorems~2.1-2.2}]{HR1}.

\subsubsection{Estimates on the upper level sets $F_\lambda(t)$ defined in~\eqref{defFlambda}}

Formula~\eqref{FGgeneral} can be viewed as a Freidlin-G\"artner type formula, as these authors were the first ones to derive in~\cite{FG} a variational formula for the spreading speeds of solutions with compact initial supports, in the context of spatially periodic reaction-diffusion equations of the Fisher-KPP type (the formula derived from~\cite{FG} and~\cite{BHN,BHN1,R1,W} actually involves further notions of minimal speeds of pulsating fronts in suitable directions). However, while the anisotropy of the original Freidlin-G\"artner formula is a result of the spatial heterogeneity of the equation, the one in formula~\eqref{FGgeneral} above reflects the shape of the initial support of the solution. Formula~\eqref{ass-cpt} means that the envelop set $\mc{W}$ of the spreading speeds $w(e)$ is a  {\em spreading set} for the solution~$u$ of~\eqref{homo}-\eqref{defu0}. Furthermore,~\eqref{asspre} implies that $\mc{W}$ is the open $c^*$-neighborhood of the positive cone generated by the directions $\mc{U}(U)$ (it is therefore either unbounded, when $\mc{U}(U)\neq\emptyset$, or it coincides with $B_{c^*}$). Notice that $\mc{W}$ is not convex in general (for instance, if~$U\neq\emptyset$ is a non-convex closed cone, say with vertex~$0$, then~$\R^+\mc{U}(U)=U\setminus\{0\}$ and thus  $\W$ is not convex either). Nevertheless, if $U$ is convex or if there is a convex set $U'$ such that $d_{\mc{H}}(U,U')<+\infty$, then
$$\R^+\mc{U}(U)\cup\{0\}=\R^+\mc{U}(U')\cup\{0\}$$
is convex and $\W$ is convex too. It also follows from the formulas~\eqref{ass-cpt}-\eqref{asspre} that the rescaled upper level sets $t^{-1}F_\lambda(t)$, as defined in~\eqref{defFlambda}, converge locally to the spreading set $\mc{W}$, in the sense that, for any $R>0$ and any $\lambda\in(0,1)$,
\Fi{Hloc}
d_{\mc{H}}\big(\ol{B_R}\cap t^{-1}F_\lambda(t)\,,\,\ol{B_R}\cap \mc{W}\big)\to0\ \text{ as }t\to+\infty,
\Ff
see~\cite[Theorem~2.3]{HR1}. The above convergence is not global in general, that is, it does not hold in general without the intersection with the balls $\ol{B_R}$, see~\cite[Proposition~6.5]{HR1}. However, still under Hypothesis~\ref{hyp:minimalspeed}, if instead of~\eqref{hyp:U} one assumes that 
\Fi{dUrho}
d_{\mc{H}}(U,U_\rho)<+\infty,
\Ff
then the rescaled upper level sets $t^{-1}F_\lambda(t)$ globally approach the $c^*$-neighborhood of the rescaled initial supports $t^{-1}U$, in the sense that
\Fi{dH}
d_{\mc{H}}\big(t^{-1}F_\lambda(t) \,,\, t^{-1}U+B_{c^*}\big)\to0\ \text{ as }t\to+\infty,
\Ff
see~\cite[Theorem~2.4]{HR1}.
The role of condition~\eqref{dH} is cutting off regions of $U$ which play a negligible role in the large-time behavior of the solution. 

To illustrate the definitions and properties of the sets $\mc{B}(U)$, $\mc{U}(U)$ and $\mc{W}$ defined above, and other applications of the previous results, consider the case when $U$ is a subgraph
\be\label{Ugamma}
U=\big\{x=(x',x_N)\in \R^{N-1}\times\R:x_N\le\gamma(x')\big\},
\ee
with $\gamma\in L^\infty_{loc}(\R^{N-1})$. Thus, $U_\rho\neq\emptyset$ for any $\rho>0$. If $\gamma(x')/|x'|\to\alpha\in\R$ as $|x'|\to+\infty$, then
$$\mc{B}(U)=\big\{e=(e',e_N)\in\Sph:e_N>\alpha|e'|\big\}$$
and
$$\mc{U}(U)=\mc{U}(U_\rho)=\big\{e\in\Sph:e_N\leq\alpha|e'|\big\}.$$
Hence~\eqref{hyp:U} is fulfilled and then, under Hypothesis~\ref{hyp:minimalspeed}, the previous results hold. Nevertheless, the shape of the spreading set $\mc{W}$ given by~\eqref{asspre} changes according to $\alpha$: if $\alpha>0$, then
\be\label{Wshift}
\mc{W}=\big\{x=(x',x_N)\in\R^N:x_N< \alpha\,|x'|+c^*\sqrt{1+\alpha^2}\big\}
\ee
(a shift of the interior of the cone $\R^+\mc{U}(U)$); if~$\alpha<0$, then~$\mc{W}$ is still the $c^*$-neighborhood of the cone $\R^+\mc{U}(U)$, but $\mc{W}$ is now $C^1$ and convex, and~$w(e)=c^*$ if~$e_N\ge|e'|/|\alpha|$; if~$\alpha=0$, then
$$\mc{W}=\{x\in\R^N:x_N<c^*\},$$
$w(e)=+\infty$ if~$e_N\le0$, and~$w(e)=c^*/e_N$ if~$e_N>0$. Now, if $\gamma(x')/|x'|\to-\infty$ as $|x'|\to+\infty$, then
$$\mc{B}(U)\!=\!\Sph\setminus\{-\mathrm{e}_N\},\ \ \mc{U}(U)\!=\!\mc{U}(U_\rho)\!=\!\{-\mathrm{e}_N\},$$
and
$$\mc{W}=-\R^+\mathrm{e}_N+B_{c^*}=\big\{x\in\R^N:|x'|<c^*,\ x_N\leq0\big\}\cup B_{c^*}.$$
On the other hand, if $\gamma(x')/|x'|\to+\infty$ as $|x'|\to+\infty$, then $\mc{B}(U)=\emptyset$, $\mc{U}(U)\!=\!\mc{U}(U_\rho)\!=\!\Sph$, and $\mc{W}=\R^N$.

\subsubsection{Examples and counter-examples}

Several additional comments on these spreading properties are in order. First of all, the existence of spreading speeds satisfying~\eqref{spreading0}, as well as the formulas~\eqref{ass-cpt},~\eqref{Hloc} or~\eqref{dH}, do not hold in general without Hypothesis~\ref{hyp:minimalspeed}, as follows for instance from~\cite{FM} and~\eqref{terrasse} for some tristable functions of the type~\eqref{tristable} with $c_1>c_2$. We also point out that, on the one hand, the geometric assumption~\eqref{hyp:U} is invariant under rigid transformations of~$U$ and is fulfilled in particular when $d_{\mc{H}}(U,U_\rho)<+\infty$ and in addition $U$ is star-shaped or $\mc{B}(U')\cup\mc{U}(U')=\Sph$ for some $U'$ such that $d_{\mc{H}}(U,U')<+\infty$, see~\cite[Proposition~5.1]{HR1}. On the other hand, a sufficient condition for~\eqref{dUrho} to hold is that the set $U$ fulfills the uniform interior sphere condition of radius~$\rho$ (then, $d_{\mc{H}}(U,U_\rho)\leq2\rho$). Another sufficient condition for~\eqref{dUrho} is the case of a subgraph~\eqref{Ugamma} with $\gamma$ having uniformly bounded local oscillations, that is,
\be\label{unifosc}
\sup_{x',y'\in\R^{N-1},\,|x'-y'|\le1}|\gamma(x')-\gamma(y')|<+\infty,
\ee
Independently of~\eqref{Ugamma} or~\eqref{unifosc}, notice also that, in order to have the conclusion~\eqref{dH}, condition~\eqref{dUrho} needs to be fulfilled with the quantity~$\rho$ provided by Hypothesis~\ref{hyp:invasion}. Thus,~\eqref{dH} holds when~$f$ satisfies the condition~\eqref{HTconditions} (ensuring the hair trigger effect), as soon as $U\neq\emptyset$ is uniformly $C^{1,1}$. 

We finally mention that the conditions~\eqref{hyp:U} and~\eqref{dUrho} can not be compared and the spreading properties do not hold in general without them. For instance, on the one hand, the set
$$U:=\bigcup_{n\in\N}\overline{B_{2^n+1}}\setminus B_{2^n-1}$$
satisfies $U_\rho\neq\emptyset$ and~\eqref{dUrho} for any $\rho\in(0,1]$, but it does not satisfy~\eqref{hyp:U} with any $\rho>0$, and the solution $u$ of~\eqref{homo}-\eqref{defu0} with, say, $f(s)=s(1-s)$ then satisfies~\eqref{dH}, but it does not satisfy~\eqref{spreading0},~\eqref{ass-cpt} or~\eqref{Hloc}, for any function $w:\Sph\to[0,+\infty]$ and any open set~$\mc{W}\subset\R^N$ which is star-shaped with respect to the origin, see~\cite[Proposition~6.1]{HR1}. On the other hand, the set
$$\baa{rcll}
U& := & & \big\{x\in\R^N:x_1\ge0,\ x_2^2+\cdots+x_N^2\le1\big\}\vspace{3pt}\\
& & \cup & \big\{x\in\R^N:x_1\ge0,\ (x_2-\sqrt{x_1})^2+x_3^2+\cdots+x_N^2\le e^{-x_1^2}\big\}\eaa$$
satisfies $U_\rho\neq\emptyset$ and~\eqref{hyp:U} for any $\rho\in(0,1]$, but it does not satisfy~\eqref{dUrho} with any $\rho>0$, and the solution $u$ of~\eqref{homo}-\eqref{defu0} with, say, $f(s)=s(1-s)$ then satisfies~\eqref{spreading0},~\eqref{ass-cpt} and~\eqref{Hloc} with $\mc{W}=\R^+\mathrm{e}_1+B_{c^*}$, but it does not satisfy~\eqref{dH}, see~\cite[Proposition~6.2]{HR1}. 


\subsection{Further convergence results for general reactions and loca\-lized initial data}\label{sec24}

Many papers have been devoted to the study of large-time dynamics of solutions of equations of the type~\eqref{homo}, with or without Hypotheses~\ref{hyp:invasion} or~\ref{hyp:minimalspeed}, when the initial conditions~$u_0:\R^N\to[0,1]$ are compactly supported or are somehow localized. For instance, with $N=1$ and $f$ of the type~\eqref{ignition}, with in addition $f$ non-decreasing in a neighborhood of $\alpha$, or of the type~\eqref{bistable} with $\int_0^1f(s)ds>0$, it was proved in~\cite{DM1} that, for any family $[0,+\infty)\ni\lambda\mapsto u_{0,\lambda}$ of compactly supported initial conditions, which is continuous and increasing in the $L^1(\R)$ sense and which is such that $u_{0,0}=0$, there is a unique threshold $\lambda^*\in(0,+\infty]$ such that the solutions $u_\lambda$ of~\eqref{homo} with initial conditions $u_{0,\lambda}$ satisfy:
\begin{itemize}
\item $u_\lambda(t,\cdot)\to0$ as $t\to+\infty$ uniformly in $\R$ if $0\le\lambda<\lambda^*$ (the so-called extinction case),
\item $u_\lambda(t,\cdot)\to1$ as $t\to+\infty$ locally uniformly in $\R$ if $\lambda^*<+\infty$ and $\lambda>\lambda^*$ (the invasion case),
\item $u_{\lambda^*}(t,\cdot)\to\alpha$ as $t\to+\infty$ locally uniformly in $\R$ if $\lambda^*<+\infty$ in the case~\eqref{ignition}, while $u_{\lambda^*}(t,\cdot)\to\Phi$ as $t\to+\infty$ uniformly in $\R$ in the case~\eqref{bistable}, where $\Phi:\R\to(0,1)$ is a stationary solution of~\eqref{homo} such that $\Phi(\pm\infty)=0$ (such a function $\Phi$ is actually unique and radially decreasing, up to spatial shifts).
\end{itemize}
The first results of that type were obtained in~\cite{Z} when the initial conditions are indicator functions of bounded intervals. We also refer to~\cite{AHR,AW,GRH,K,LK} for other extinction/invasion results with respect to the size, amplitude or fragmentation of the initial condition~$u_0$ in $\R^N$ for various reaction terms $f$,~\cite{MP1,MP2,P0} for the existence of unique thresholds with bistable-type autonomous or non-autonomous equations and compactly supported initial conditions in~$\R$ or~$\R^N$, and to~\cite{MZ1,MZ2} for similar conclusions with various functions $f$ in the case of radially non-increasing symmetric and possibly not-compactly-supported initial conditions in $L^2(\R)$ and $L^2(\R^N)$. 

Other results, holding for more general reaction terms $f$, deal with the question of the local or global large-time convergence of the solutions of~\eqref{homo} to a stationary solution (convergence results), or to the set of stationary solutions (quasiconvergence). For instance, it was proved in~\cite{DM1} that, in dimension $N=1$, under the sole assumption $f(0)=0$, any bounded nonnegative solution of~\eqref{homo} with a compactly supported initial condition converges locally in $\R$ to a stationary solution, which is either constant or even and decreasing with respect to a point. Further positive or negative convergence or quasiconvergence results for various equations in $\R$ or $\R^N$ have been obtained in~\cite{DP,MP1,MP2,P1}. 

Lastly, equations of the type~\eqref{homo} set in unbounded domains~$\Omega$ instead of $\R^N$ and notions of spreading speeds and persistence/invasion for solutions that are initially compactly supported in such domains have been investigated in~\cite{BHN2,R3}.


\subsection{Known flattening results for bounded or convex-like initial supports}\label{sec25}

In this section, we first recall the classical result of~\cite{J} leading to the local flattening of invading solutions with bounded initial supports. We then introduce the notions of $\Omega$-limit set and local one-dimensional symmetry for solutions with general initial supports, after~\cite{HR2}, and we present known recent results in the case of almost-planar, $V$-shaped, or convex-like initial supports.

\subsubsection{Local flattening for solutions with bounded initial supports}\label{sec251}

For the invading solutions~$u$ (that is, those converging to $1$ locally uniformly in $\R^N$ as $t\to+\infty$) with compactly supported initial conditions $u_0$ such that $0\le u_0\le 1$ in $\R^N$, it was proved in~\cite{J} from the parabolic strong maximum principle and the Hopf lemma that, for any $t>0$ and any $x\in\R^N$, there holds that
\be\label{jones1}
(\{x\}+\R^+\nabla u(t,x))\,\cap\,C\neq\emptyset,
\ee
where $C$ is the convex hull of the support of $u_0$. Since $C$ is bounded, it follows that
\be\label{jones2}
\sup_{|x|\ge A,\,t>0}\big|\hat{\nabla u(t,x)}+\hat{x}\big|\to0\ \hbox{ as }A\to+\infty.
\ee
Since $\lim_{t\to+\infty}u(t,\cdot)=1$ (locally uniformly in $\R^N$) and $\lim_{|x|\to+\infty}u(t,x)=0$ for every $t>0$ (because $u_0$ is compactly supported), one gets that, for any $\lambda\in(0,1)$, the level set $E_\lambda(t)$ is a non-empty compact set for every $t>0$ large enough, and $\min_{\lambda'\in(0,\lambda],\,x\in E_{\lambda'}(t)}|x|\to+\infty$ as $t\to+\infty$. Hence, for any sequence $(t_n)_{n\in\N}$ in $(0,+\infty)$ diverging to $+\infty$ and any sequence $(x_n)_{n\in\N}$ in $\R^N$ such that $\lambda_n:=u(t_n,x_n)\to\lambda\in(0,1)$ as $n\to+\infty$, there holds that, for every $\varepsilon>0$ and $R>0$,
\be\label{jones3}
E_{\lambda_n}(t_n)\cap B_R(x_n)\subset\Big\{x\in B_R(x_n):\big|(x-x_n)\cdot\hat{x_n}\big|<\varepsilon\Big\}
\ee
for all $n$ large enough.\footnote{\ Indeed, otherwise, there would be a sequence $(y_n)_{n\in\N}$ in $\R^N$ such that $u(t_n,y_n)=\lambda_n=u(t_n,x_n)$, $\sup_{n\in\N}|y_n-x_n|<+\infty$ and $\limsup_{n\to+\infty}|(y_n-x_n)\cdot\hat{x_n}|>0$. Rolle's theorem would then yield the existence of a sequence $(z_n)_{n\in\N}$ such that $z_n\in[x_n,y_n]$ and $\nabla u(t_n,z_n)\cdot(y_n-x_n)=0$ for all $n\in\N$. Since $|x_n|\to+\infty$ and then $|z_n|\to+\infty$ as $n\to+\infty$, and since the sequence $(y_n-x_n)_{n\in\N}$ is bounded, it would then follow from~\eqref{jones2} that $(y_n-x_n)\cdot\hat{z_n}\to0$ as $n\to+\infty$, and then $(y_n-x_n)\cdot\hat{x_n}\to0$ as $n\to+\infty$, leading to a contradiction.} Furthermore, assuming without loss of generality that $|x_n|>0$ for all $n$, and letting $R_0>0$ be such that $C\subset B_{R_0}$, property~\eqref{jones1} implies that each function $u(t_n,\cdot)$ is then decreasing with respect to the direction $\hat{x_n}$ in the half-space $\{x\in\R^N:x\cdot\hat{x_n}\ge R_0\}$. Since $\lim_{t\to+\infty}u(t,\cdot)=1$ locally uniformly in $\R^N$ and since $\lim_{|x|\to+\infty}u(t,x)=0$ for every $t>0$, it then follows from~\eqref{jones3} and the previous observations that, for every $\varepsilon>0$ and $R>R'>0$, there is $n_0\in\N$ such that, for every $n\ge n_0$, $E_{\lambda_n}(t_n)\cap B_R(x_n)$ is the graph of a function of the $N-1$ variables orthogonal to $\hat{x_n}$, which is defined in at least an open Euclidean ball $B'$ of radius $R'$ in $\R^{N-1}$, and whose oscillation in $B'$ is less than $\varepsilon$. 
In particular, the level sets $E_\lambda(t)$ become locally flat as $t\to+\infty$ 
in the sense described at the end of Section~\ref{sec1}.

Moreover, with the same assumptions and notations as in the previous paragraph, up to extraction of a subsequence, one can assume that $\hat{x_n}\to e\in\Sph$ and that, from standard parabolic estimates, the functions $u(t_n+\cdot,x_n+\cdot)$ converge in $C^{1;2}_{loc}(\R_t\times\R_x^N)$ to a solution $v:\R\times\R^N\to[0,1]$ solving~\eqref{homo} for all $t\in\R$. Furthermore, from the strong parabolic maximum principle, either $v\equiv0$ in $\R\times\R^N$, or $v\equiv1$ in $\R\times\R^N$, or $0<v<1$ in $\R\times\R^N$. Here, since $v(0,0)=\lambda\in(0,1)$, one has $0<v<1$ in $\R\times\R^N$. It then follows from~\eqref{jones2} that $\psi:=v(0,\cdot)\in C^2(\R^N)$ satisfies
$$e'\cdot\nabla\psi\equiv0\hbox{ in $\R^N$}$$
for every $e'\in\Sph$ 
orthogonal to $e$. In other words, $\psi$ is thus one-dimensional or planar, that is, it can be written as
$$\psi(x)=\Psi(x\cdot e)$$
for some function $\Psi:\R\to\R$.

Notice also that, together with $\lim_{t\to+\infty}u(t,\cdot)=1$ locally uniformly in $\R^N$ and $\lim_{|x|\to+\infty}u(t,x)=0$ for every $t>0$, the fundamental property~\eqref{jones1} also implies that, for any $\lambda\in(0,1)$ and $x_0\in\R^N$, there are $T>0$, $A>0$ and a map $[T,+\infty)\ni t\mapsto R(t)\in(0,+\infty)$ such that $\lim_{t\to+\infty}R(t)=+\infty$ and
$$E_\lambda(t)\ \subset\ B_{R(t)+A}(x_0)\setminus B_{R(t)}(x_0)\quad \text{for all }t\ge T,$$
see~\cite{R2}. This nevertheless does not mean that the level sets become asymptotically spherical in the sense that $\inf_{x_0\in\R^N,\,R>0}d_{\mc{H}}(E_\lambda(t),\partial B_R(x_0))\to0$ as $t\to+\infty$. Actually, the latter property fails in general, as proved in~\cite{R2,R} for various functions $f$.

\subsubsection{Notions of $\Omega$-limit set and asymptotic local one-dimensional symmetry for solutions with general initial supports}

Following~\cite{HR2,P2,P3}, the observations of Section~\ref{sec251} led to introduce the notion of {\em asymptotic local one-dimensional symmetry} for a solution $u:(0,+\infty)\times\R^N\to[0,1]$ of~\eqref{homo}. More precisely, we define the $\O$-limit set $\Omega(u)$ of $u$ by
\be\label{def:Omega}\begin{array}{ll}
\hspace{-8pt}\Omega(u)\!:=\!\Big\{&\!\!\!\!\!\!\psi\in L^\infty(\R^N):u(t_n,x_n+\.)\to\psi\text{ in $L^\infty_{loc}(\R^N)$ as $n\to+\infty$,}\\
& \!\!\!\!\!\text{for some sequences $(t_n)_{n\in\N}$ in $\R^+\!$ diverging to $\!+\infty$, and $\!(x_n)_{n\in\N}$ in $\R^N$}\!\Big\}.
\end{array}
\ee
Notice that $\Omega(u)\neq\emptyset$ and $\Omega(u)\subset C^2(\R^N,[0,1])$, from standard parabolic estimates. The solution $u$ is called {\em asymptotically locally planar} if every $\psi\in\Omega(u)$ is one-dimensional, that is, if
$$\psi(x)\equiv\Psi(x\.e)\ \hbox{ for all $x\in\R^N$},$$
for some $\Psi\in C^2(\R)$ and~$e\in\Sph$, and $\psi$ is called one-dimensional and (strictly) monotone if $\psi(x)\equiv\Psi(x\.e)$ with $\Psi$ (strictly) monotone.\footnote{\ This property reclaims the De Giorgi conjecture about the one-dimensional property of bounded solutions of the elliptic Allen-Cahn equation $\Delta u+u(1-u)(u-1/2)=0$ in $\R^N$ (obtained after a change of unknown from the original Allen-Cahn equation), which are assumed to be monotone in one direction, see~\cite{AC,BCN,dGconj,DKW,GG,S}.} Furthermore, from the parabolic strong maximum principle, any $\psi\in\Omega(u)$ satisfies either $\psi\equiv0$ in $\R^N$, or $\psi\equiv1$ in $\R^N$, or $0<\psi<1$ in $\R^N$. We also point out that if the level sets of $u$ become locally flat as $t\to+\infty$, in the sense described in Section~\ref{sec1}, then~$u$ is necessarily asymptotically locally planar.\footnote{\ Indeed, otherwise, there would exist $\psi\in\Omega(u)$ and two points $y\neq z\in\R^N$ such that $\nabla\psi(y)$ and $\nabla\psi(z)$ are not parallel. Thus, $0<\psi<1$ in $\R^N$. Since, say, $\nabla\psi(y)$ is not zero, one can assume without loss of generality, even if it means slightly moving $y$, that $\psi(y)\neq\psi(z)$. By definition of $\Omega(u)$, there are a sequence $(t_n)_{n\in\N}$ diverging to $+\infty$ and a sequence $(x_n)_{n\in\N}$ in $\R^N$ such that
$$\lim_{n\to+\infty}u(t_n,x_n+\cdot)=\psi$$
in $C^2_{loc}(\R^N)$. Since $\nabla\psi(y)$ and $\nabla\psi(z)$ are not zero, there are two sequences $(y_n)_{n\in\N}$ and $(z_n)_{n\in\N}$ in $\R^N$, converging respectively to $y$ and $z$, and such that $u(t_n,x_n+y_n)=\psi(y)$ and $u(t_n,x_n+z_n)=\psi(z)$ for all $n\in\N$. From the assumed asymptotic local flatness of the level sets of $u$, applied around the points $x_n+y_n$ and $x_n+z_n$ and the values $\psi(y)$ and $\psi(z)$, it follows that there are two affine hyperplanes $H_y$ and $H_z$, containing the points $y$ and $z$ respectively, such that $\psi$ is constant on $H_y$ and on $H_z$. Thus, $\psi(x)=\psi(y)$ for all $x\in H_y$ and $\psi(x)=\psi(z)$ for all $x\in H_z$. Since $\nabla\psi(y)$ and $\nabla\psi(z)$ are not zero, it follows that the hyperplanes $H_y$ and $H_z$ are orthogonal to $\nabla\psi(y)$ and $\nabla\psi(z)$, respectively. Therefore, these two hyperplanes are not parallel and then have a non-empty intersection $E$. Finally, for each $x\in E$, one then has $\psi(x)=\psi(y)$ and $\psi(x)=\psi(z)$, yielding a contradiction (since $\psi(y)\neq\psi(z)$).} But the converse property is trivially false in general (for instance, if $u_0\equiv\lambda\in(0,1)$ with $f(\lambda)=0$, then $u$ is asymptotically locally planar since $\Omega(u)=\{\lambda\}$, whereas the level set $E_\lambda(t)=\R^N$ is not locally flat as $t\to+\infty$!).

The results described in the first two paragraphs of Section~\ref{sec251}, derived from~\cite{J}, imply that any invading solution $u$ of~\eqref{homo} (this is the case where Hypothesis~\ref{hyp:invasion} is assumed and when $u_0=\1_U$ with $U$ containing a ball of radius $\rho$) with compactly supported initial condition is asymptotically locally planar. This property nevertheless is not true in general for non-invading solutions. For instance, with a bistable function $f$ of the type~\eqref{bistable} with $\int_0^1f(s)ds>0$, it follows from~\cite{MZ2,P0} that there is a unique $R>0$ such that the solution $u$ of~\eqref{homo}-\eqref{defu0} with $U=B_R$ converges as $t\to+\infty$ in $C^2(\R^N)$ to a radially decreasing stationary solution $\Phi:\R^N\to(0,1)$ such that $\Phi(x)\to0$ as $|x|\to+\infty$. Then
$$\Omega(u)=\{0\}\cup\{\Phi(a+\cdot):a\in\R^N\}$$
and $u$ is not asymptotically locally planar.

\subsubsection{Almost-planar and $V$-shaped initial supports}

For general initial conditions $u_0$ which are not compactly supported, the questions of the flattening of the level sets and the asymptotic local one-dimensional symmetry of the solutions of~\eqref{homo} can still be addressed. But the situation is more intricate, as for the spreading properties discussed in Section~\ref{sec23}. Actually, even for initial conditions $u_0$ of the type $u_0=\1_U$, the case of unbounded sets $U$ has been much less studied in the literature. However, on the one hand, if $U$ is the subgraph of a bounded function (or more generally when there are two parallel half-spaces $H$ and $H'$ with outward normal $e$ such that $H\subset U\subset H'$), then the asymptotic local one-dimensional symmetry holds for functions $f$ of the bistable type~\eqref{bistable}, as follows from~\cite{BH1,FM,MN,MNT,RR1}. The same conclusion is valid when $f$ satisfies the Fisher-KPP condition
\be\label{kpp0}
f(0)=f(1)=0,\ 0<f(s)\le f'(0)s\hbox{ for all $s\in(0,1)$},
\ee
from~\cite{BH1,B,HNRR,L,U1}. Furthermore, in these two cases, the $\Omega$-limit set $\Omega(u)$ is made up of the constants $0$ and $1$ and the shifts of planar traveling front profiles $x\mapsto\varphi^*(x\cdot e)$, with $\varphi^*$ solving~\eqref{limitsvp}, $(\varphi^*)''+c^*(\varphi^*)'+f(\varphi^*)=0$ in $\R$, and $c^*$ being the minimal speed (the unique speed in the bistable case) of planar traveling fronts connecting $1$ to $0$. Since $(\varphi^*)'<0$ in~$\R$, the level sets of these solutions $u$ necessarily locally flatten as $t\to+\infty$. Notice that this local flatness is however not global in general, see~\cite{MNT,RR1,RR2}. On the other hand, the asymptotic local flatness and one-dimensional symmetry are known to fail in general, for instance when $U$ is ``V-shaped'', i.e.~when $U$ is the union of two half-spaces with non-parallel boundaries, as follows from~\cite{HMR1,HMR2,HN,HR2,NT,RR1} for bistable or Fisher-KPP functions $f$.

\subsubsection{Convex-like initial supports}

Recently, we considered in~\cite{HR2} other classes of initial supports $U$, when the function $f$ satisfies a stronger Fisher-KPP condition, that is,
\be\label{fkpp}\left\{\baa{l}
f(0)=f(1)=0,\ f(s)>0\hbox{ for all }s\in(0,1),\vspace{3pt}\\
\displaystyle s\mapsto\frac{f(s)}{s}\hbox{ is nonincreasing in $(0,1]$}.\eaa\right.
\ee
In this case the hair trigger effect holds~\cite{AW}, i.e., Hypothesis~\ref{hyp:invasion} is fulfilled for any~$\theta,\rho\!>\!0$, moreover Hypothesis~\ref{hyp:minimalspeed} also holds and the minimal speed $c^*$ of planar traveling fronts connecting $1$ to $0$ is equal to $c^*=2\sqrt{f'(0)}$, see~\cite{AW,KPP}. We showed in~\cite[Theorem~2.1]{HR2} that, if $U$ satisfies~\eqref{dUrho} for some $\rho>0$ and if $U$ is convex, or at a finite Hausdorff distance from a convex set, then the solution $u$ of~\eqref{homo}-\eqref{defu0} is asymptotically locally planar: for any $\psi\in\Omega(u)$, there are $e\in\Sph$ and $\Psi:\R\to[0,1]$ such that $\psi(x)\equiv\Psi(x\cdot e)$ in~$\R^N$. Furthermore, either $\Psi$ is constant or $\Psi$ is strictly monotone. In the case where $\Psi$ is strictly monotone in $\R$ and if $(t_n)_{n\in\N}$ and $(x_n)_{n\in\N}$ are sequences such that $t_n\to+\infty$ and
$$u(t_n,x_n+\cdot)\to\psi\hbox{ in $C^2_{loc}(\R^N)$},$$
then the level sets $E_\lambda(t_n)$ locally flatten around the points~$x_n$ as $n\to+\infty$. The case of sets $U$ that are convex or at finite distance from a convex set is actually a particular case of a more general situation. Namely, for a given non-empty set $U\subset\R^N$ and a given point $x\in\R^N$, by letting
$$\pi_x:=\big\{\xi\in\ol U: |x-\xi|=\dist(x,U)\big\}$$
be the set of orthogonal projections of~$x$ onto $\ol U$ and, for $x\notin\ol U$, by defining
$$\mc{O}(x):=\sup_{\xi\in\pi_x,\,y\in U\setminus\{\xi\}}\,\hat{x-\xi}\.\hat{y-\xi},$$
with the convention that $\mc{O}(x)=-\infty$ if $U=\emptyset$ or $U$ is a singleton (otherwise $\mc{O}(x)=\cos\vartheta$, where $\vartheta$ is the infimum among all~$\xi\in\pi_x$ of half the opening of the largest exterior cone to $U$ at~$\xi$ having axis~$x-\xi$), we showed in~\cite[Theorem~2.2]{HR2} that, if  
\Fi{ballcone}
\lim_{R\to+\infty}\bigg(\,\sup_{x\in\R^N,\,\dist(x,U)=R}\mc{O}(x)\bigg)\leq 0,
\Ff	
then the solution $u$ of~\eqref{homo}-\eqref{defu0} is asymptotically locally planar, and the elements of~$\Omega(u)$ are either constant or planar and strictly decreasing with respect to a direction~$e$. In~\eqref{ballcone}, the left-hand side is equal to $-\infty$ if $\sup_{x\in\R^N}\dist(x,U)<+\infty$ (in this case, since $d_{\mathcal{H}}(U,U_\rho)<+\infty$, one gets that $u(t,x)\to1$ uniformly in $x\in\R^N$ as~$t\to+\infty$ from the results of Section~\ref{sec23}, hence $\Omega(u)=\{1\}$). The limit in~\eqref{ballcone} always exists, since the map
$$R\mapsto\sup_{x\in\R^N,\,\dist(x,U)=R}\mc{O}(x)$$
is nonincreasing, see~\cite[Lemma~4.3]{HR2}. Observe also that, if~$U$ is convex, then $\mc{O}(x)\leq0$ for every $x\notin\ol U$, hence~\eqref{ballcone} is fulfilled (it turns out to be satisfied as well if $U$ is at a finite distance from a convex set). Condition~\eqref{ballcone} is fulfilled as well by subgraphs~\eqref{Ugamma} of functions $\gamma\in L^\infty_{loc}(\R^{N-1})$ with vanishing global mean, that is, $|\gamma(x')-\gamma(y')|=o(|x'-y'|)$ as $|x'-y'|\to+\infty$ (notice that such sets $U$ are in general not convex nor at finite distance from a convex set). On the other hand, without the geometric condition~\eqref{ballcone}, the asymptotic local one-dimensional symmetry of the solutions of~\eqref{homo}-\eqref{defu0} does not hold in general (consider for instance a $V$-shaped $U$ that is the union of two non-parallel half-spaces), nor does it in general without the condition~\eqref{dUrho}, see~\cite[Propositions~4.4 and~4.6]{HR2}.

Under the assumptions~\eqref{dUrho} and~\eqref{fkpp}-\eqref{ballcone}, we also characterized in~\cite[Theorem~2.4]{HR2} the set of directions of strict decreasing monotonicity of the elements of $\Omega(u)$, defined by
$$\mathcal{E}=\big\{e\in\Sph\ :\ \exists\,\Psi:\R\to\R\hbox{ decreasing},\ \big(x\mapsto\Psi(x\cdot e)\big)\in\Omega(u)\big\}.$$
Namely, we showed that $\mc{E}$ coincides with the set of all limits of sequences $(\hat{x_n-\xi_n})_{n\in\N}$ with $\dist(x_n,U)\to+\infty$ and $\xi_n\in\pi_{x_n}$. In particular, if $U$ is bounded, then $\mc{E}=\Sph$, and this property in this case can also be viewed as consequences of results of~\cite{D,RRR}. If $U$ is convex, then $\mc{E}$ is the closure of the set of outward unit normal vectors to all half-spaces containing $U$. If $U$ satisfies~\eqref{Ugamma} with $\gamma$ having vanishing global mean, then $\mc{E}=\{\mathrm{e}_N\}$, that is, the elements of $\Omega(u)$ depend on the variable $x_N$ only, hence $\nabla_{\!x'}u(t,x',x_N)\to0$ as $t\to+\infty$ uniformly with respect to $(x',x_N)\in\R^{N-1}\times\R$.\footnote{\ However, as $t\to+\infty$, $u(t,\cdot)$ is in general not arbitrarily close uniformly in $\R^N$ to functions depending on $x_N$ only: this is the case for instance in dimension $N=2$ if $\gamma:\R\to\R$ has two different limits at $\pm\infty$.} Bounded functions $\gamma$ are particular cases of functions with vanishing global mean and the previous general results applied to this very specific case (and to the case when $U$ is trapped between two parallel half-spaces, even if not a subgraph itself) can also be derived from~\cite{BH1,B,HNRR,L,U1}.

For bistable functions $f$ of the type~\eqref{bistable} with $\int_0^1f(s)ds>0$, it is known in the literature that the asymptotic local one-dimensional symmetry holds when $U$ is bounded and contains a large enough ball (see~\cite{AW,J}), or when $U$ is trapped between two parallel half-spaces (see~\cite{BH1,MN,MNT,HR2}), and it is known to fail when $U$ is $V$-shaped (see~\cite{HMR1,HMR2,NT,RR1}). These facts lead us to conjecture that the result of \cite{HR2} should remain valid beyond the KPP case~\eqref{fkpp}. Namely, for general functions $f$ satisfying Hypothesis~\ref{hyp:invasion} for some $\rho>0$, we conjecture that the solution $u$ of~\eqref{homo}-\eqref{defu0} is asymptotically locally planar provided that $U$ satisfies~\eqref{dUrho} and~\eqref{ballcone}.

Some results in the literature lead to a complete characterization of the $\O$-limit set in specific situations. Namely, when $U$ is bounded with non-empty interior, or when $U$ is trapped between two parallel half-spaces, it holds that
\be\label{conjOmega}
\O(u)=\{0,1\}\,\cup\,\big\{x\mapsto\vp^*(x\.e +a)\,:\,e\in\mc{E},\,a\in\R\,\big\}.
\ee
where $\varphi^*$ is the profile of the planar traveling front connecting $1$ to $0$ with minimal speed $c^*=2\sqrt{f'(0)}$ (see~\cite{D,RRR} for the first case and~\cite{BH1,B,HNRR,L,U1} for the second one). Let us point out that~\eqref{conjOmega} is also coherent with the result~\eqref{dH}, which loosely says that the upper level sets spread with normal velocity~$c^*$, which is precisely the speed at which the fronts $\vp^*(x\.e-c^*t+a)$ travel. Therefore, it is reasonable to conjecture that~\eqref{conjOmega} holds true for a general set $U$ satisfying~\eqref{dUrho} and~\eqref{ballcone}.

	
\section{The subgraph case: new flattening properties and related conjectures}\label{sec3}

In this section, we focus on the important class of initial conditions which are indicator functions of subgraphs in $\R^N$. Up to rotation, let us consider graphs in the direction~$x_N$, and initial conditions $u_0$ given by
\be\label{defu0bis}
u_0(x',x_N)=\begin{cases} 0 & \text{if }x_N>\gamma(x'),\\
1 & \text{otherwise},
\end{cases}
\ee
that is, $u_0=\1_U$ with $U$ given by~\eqref{Ugamma}, where the function $\gamma:\R^{N-1}\to\R$ is always assumed to be in $L^\infty_{loc}(\R^{N-1})$. First of all, from parabolic estimates, one has~$u(t,x',x_N)\to0$ as~$x_N\to+\infty$ and~$u(t,x',x_N)\to1$ as $x_N\to-\infty$, locally uniformly in~$(t,x')\in[0,+\infty)\times\R^{N-1}$. Furthermore,~$u(t,x',x_N)$ is non-increasing with respect to~$x_N$ by the parabolic maximum principle, because the initial datum $u_0$ is, and one actually sees that $\partial_{x_N}u<0$ in $(0,+\infty)\times\R^N$ by differentiating~\eqref{homo} with respect to~$x_N$ and applying the strong maximum principle to~$\partial_{x_N}u$. As~a consequence, one infers that, for every $t>0$, $x'\in\R^{N-1}$ and~$\lambda\in(0,1)$, there exists a unique value $x_N$ such that $u(t,x',x_N)=\lambda$, which will be denoted $X_\lambda(t,x')$ in the sequel,\footnote{\  The above arguments also easily imply that the function $(\lambda,t,x')\mapsto X_\lambda(t,x')$ is continuous in $(0,1)\times(0,+\infty)\times\R^{N-1}$.} that is,
\be\label{defX}
u(t,x',X_\lambda(t,x'))=\lambda.
\ee
In other words, the sets $E_\lambda(t)$ and $F_\lambda(t)$ given in~\eqref{defElambda0} and~\eqref{defFlambda} are respectively the graphs and open subgraphs of the functions $x'\mapsto X_\lambda(t,x')$.

Under Hypothesis~\ref{hyp:minimalspeed}, the spreading results of Section~\ref{sec23} applied to this case give some information on the shape of the graphs of $X_\lambda(t,\cdot)$ at large time and large space in terms of the function~$\gamma$, provided the conditions~\eqref{hyp:U} or~\eqref{dUrho} are fulfilled (the latter holds as soon as $\gamma$ has uniformly bounded local oscillations in the sense of~\eqref{unifosc}). We are now interested in the local-in-space behavior of the graphs of $X_\lambda(t,\cdot)$ at large time. Let us first point out that, because of the asymmetry of the roles of the rest states $0$ and $1$, the behavior of the graphs of $X_\lambda(t,\cdot)$ will be radically different depending on the profile of the function~$\gamma$ at infinity, and especially on whether the function $\gamma$ be large enough or not at infinity. This difference is already inherent in the results of Section~\ref{sec23}. Indeed, for instance, in the particular case~$\gamma(x')=\alpha\,|x'|$, whatever $\alpha\in\R$ may be, the graphs of the functions $X_\lambda(t,\cdot)$ look like the sets $\{x\in\R^N:\dist(x,U)=c^*t\}$ at large time~$t$, in the sense of~\eqref{dH}. If $\alpha>0$, then for each~$t>0$ the set $\{x\in\R^N:\dist(x,U)=c^*t\}$ is a shift of the graph of $\gamma$ in the direction~$x_N$ and therefore it has a vertex, whereas it is $C^1$ if $\alpha\le0$. Of course, for each~$t>0$, in both cases~$\alpha>0$ and~$\alpha\le0$, each level set of $u$ (that is, each graph of $X_\lambda(t,\cdot)$) is at least of class~$C^2$ from the implicit function theorem and the fact that $\partial_{x_N}u<0$ in $(0,+\infty)\times\R^N$. Nevertheless, the previous observations imply that there should be a difference between the flattening properties of the level sets of $u$ according to the coercivity of the function $\gamma$ at infinity.

The following result deals with the non-coercive case, i.e., $\limsup_{|x'|\to+\infty}\gamma(x')/|x'|\le0$.

\begin{theorem}\label{thm:subgraph}
Assume that Hypothesis~$\ref{hyp:minimalspeed}$ holds $($hence Hypothesis~$\ref{hyp:invasion}$ as well$)$. Let~$u$ be the solution of~\eqref{homo} with an initial datum~$u_0$ given by~\eqref{defu0bis}. If
\Fi{gamma<0}
\limsup_{|x'|\to+\infty}\frac{\gamma(x')}{|x'|}\leq0,
\Ff
then, for every $\lambda\in[\theta,1)$, with $\theta\in(0,1)$ given by Hypothesis~$\ref{hyp:invasion}$, and every basis $(\mathrm{e}'_1,\cdots,\mathrm{e}'_{N-1})$ of $\R^{N-1}$, there holds
\be\label{liminf}
\liminf_{t\to+\infty}\Big(\min_{|x'|\le R,\,1\le i\le N-1}|\nabla_{\!x'}X_\lambda(t,x')\cdot\mathrm{e}'_i|\Big)\ \longrightarrow\ 0\ \hbox{ as }R\to+\infty,
\ee
and even
\be\label{liminf2}
\sup_{x'_0\in\R^{N-1}}\Big[\liminf_{t\to+\infty}\Big(\min_{x'\in\overline{B'_R(x'_0)},\,1\le i\le N-1}|\nabla_{\!x'}X_\lambda(t,x')\cdot\mathrm{e}'_i|\Big)\Big]\ \longrightarrow\ 0\ \hbox{ as }R\to+\infty,
\ee
where $B'_R(x'_0)$ denotes the open Euclidean ball of center $x'_0$ and radius $R$ in $\R^{N-1}$.
\end{theorem}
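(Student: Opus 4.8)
The plan is to show that the level sets of $u$ locally flatten along a subsequence of times, and then translate that into an estimate on the tangential derivatives of $X_\lambda(t,\cdot)$. The key observation is that under \eqref{gamma<0} the subgraph $U$ is, in a suitable ``non-coercive'' sense, not too far from a half-space locally: if $\gamma(x')/|x'|\le o(1)$, then for any point $x$ lying high enough above the graph, the nearest point $\xi\in\pi_x$ is such that the exterior cone to $U$ at $\xi$ with axis $x-\xi$ has opening tending to the half-space case. This is exactly the content of condition \eqref{ballcone} for subgraphs with vanishing upper mean; in fact \eqref{gamma<0} implies, after a routine geometric argument, that $\limsup_{R\to\infty}\big(\sup_{\dist(x,U)=R}\mc{O}(x)\big)\le 0$. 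The subtlety is that \eqref{gamma<0} is weaker than the vanishing global mean condition used in \cite{HR2} (it only controls $\gamma$ from above, along rays from the origin, in the limsup sense), so I do not expect to be able to invoke \cite[Theorem~2.2]{HR2} verbatim; rather I would reprove the relevant flattening fact in the weaker, liminf-in-time form that the statement asks for, which is what makes \eqref{liminf}--\eqref{liminf2} only a liminf statement rather than a full limit.

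\textbf{Step 1: from \eqref{gamma<0} to a cone condition.} First I would make precise the geometric claim: assume \eqref{gamma<0}, fix $\e>0$, and show there is $R_\e$ such that whenever $x=(x',x_N)$ satisfies $\dist(x,U)\ge R_\e$ one has $\mc{O}(x)\le\e$. The point is that such an $x$ must lie above the graph with $x_N-\gamma(x')$ comparable to $\dist(x,U)$, hence $|x'|$ is either bounded or itself large; in the latter case $\gamma$ being sublinear at infinity forces the graph near the projection point $\xi$ to be nearly flat relative to the scale $|x-\xi|$, so the exterior cone at $\xi$ with axis $x-\xi$ opens up to nearly a half-space. This is elementary but needs care because $\gamma\in L^\infty_{loc}$ only, so I would work with the closed subgraph $\ol U$ and use that $\ol U$ still ``lies below'' a sublinear graph in the limsup sense.

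\textbf{Step 2: local flattening along a time subsequence.} Next, fix $\lambda\in[\theta,1)$, $x_0'\in\R^{N-1}$, $R>0$, and $\e>0$. Take a sequence $t_n\to+\infty$ realizing (up to a diagonal extraction over $R$ and $x_0'$) the liminf in \eqref{liminf2}. Consider points $x_n=(x_0'+y_n', X_\lambda(t_n, x_0'+y_n'))$ for $y_n'\in\ol{B_R'}$ chosen to maximize $|\nabla_{x'}X_\lambda(t_n,\cdot)\cdot\mathrm e_i'|$; since $u\le\theta$ off a bounded neighborhood of $U$ is impossible here (the spreading results of Section~\ref{sec23}, via Hypothesis~\ref{hyp:minimalspeed} and the subgraph structure with $U_\rho\ne\emptyset$, give $\dist(x_n,U)\to+\infty$ for $\lambda\ge\theta$), these points recede from $U$. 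Translating by $x_n$ and using parabolic estimates, $u(t_n,x_n+\cdot)\to\psi\in\Omega(u)$ in $C^2_{loc}$, with $0<\psi<1$. Using Step 1 together with the Jones-type argument reproduced in Section~\ref{sec251} --- namely that $\{x\}+\R^+\nabla u(t,x)$ meets $\ol U$, whence by the cone condition the gradient direction $-\hat{\nabla u(t_n,z)}$ is, for $z$ far from $U$, within $\e$ of the (unique up to sign) axis direction $\hat{x-\xi}$ uniformly in a fixed ball --- I would deduce that $\psi$ is planar: $\psi(x)=\Psi(x\cdot e)$ for some $e\in\Sph$ (necessarily $e=\mathrm e_N$ up to orientation, by monotonicity in $x_N$), with $\Psi$ decreasing since $\psi(0)=\lambda\in(0,1)$ and $\psi$ is non-increasing in $e$.

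\textbf{Step 3: conclusion.} Finally, translate planarity of $\psi$ back to the graph function. Since $\psi(x)=\Psi(x_N)$ with $\Psi'<0$ near level $\lambda$, the level set $\{\psi=\lambda\}$ is the hyperplane $\{x_N=X_\lambda(0)\}$ shifted to pass through $0$, so $\nabla_{x'}$ of the corresponding graph function vanishes at the limit. Quantitatively, from $u(t_n,x_n+\cdot)\to\psi$ in $C^1_{loc}$ and the implicit-function-theorem formula $\nabla_{x'}X_\lambda(t,x')=-\nabla_{x'}u(t,x',X_\lambda)/\partial_{x_N}u(t,x',X_\lambda)$ (valid since $\partial_{x_N}u<0$), one gets $\nabla_{x'}X_\lambda(t_n,x_0'+y_n')\cdot\mathrm e_i'\to 0$; by the maximizing choice of $y_n'$ this gives $\min_{x'\in\ol{B_R'(x_0')},\,i}|\nabla_{x'}X_\lambda(t_n,x')\cdot\mathrm e_i'|\to 0$ along $t_n$, hence the liminf over all $t$ is $\le\e$. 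Letting $\e\to0$ and then $R\to\infty$ yields \eqref{liminf2}, and \eqref{liminf} is the special case $x_0'=0$.

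\textbf{Main obstacle.} The delicate point is Step 1 combined with the uniformity needed in Step 2: \eqref{gamma<0} only controls $\gamma$ from above and only in a limsup sense along rays, so the cone condition $\mc{O}(x)\le\e$ for $\dist(x,U)$ large must be extracted carefully, and one must ensure the convergence $-\hat{\nabla u(t_n,z)}\to e$ holds \emph{uniformly} for $z$ in a fixed ball around $x_n$ (not merely at $z=x_n$), which is what forces $\psi$ to be genuinely planar rather than merely to have a one-dimensional gradient at the origin. This is also precisely why the statement is a liminf and not a limit: without a lower bound on $\gamma$, nothing prevents $\mc O(x)$ from being bounded away from $0$ along some sequences of times/points, so one can only hope to flatten along a well-chosen subsequence $t_n$.
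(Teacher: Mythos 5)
Your proposal takes a genuinely different route from the paper's, and it has gaps that I do not see how to repair. The first is Step 1: condition \eqref{gamma<0} does \emph{not} imply the cone condition \eqref{ballcone}. It only bounds $\gamma$ from above, asymptotically, along rays from the origin, and gives no control on local oscillations. For instance, in dimension $N=2$, let $\gamma\le0$ be equal to $0$ everywhere except on a sparse sequence of intervals $(a_j,a_j+L_j)$ with $L_j\to+\infty$, on which $\gamma=-L_j$. Then \eqref{gamma<0} holds trivially, but the point $x_j=(a_j+L_j/2,-L_j/2)$ in the middle of the $j$-th slot satisfies $\dist(x_j,U)=L_j/2\to+\infty$, the point $\xi_j=(a_j,-L_j/2)$ belongs to $\pi_{x_j}$ with $\hat{x_j-\xi_j}=(1,0)$, and the choice $y=(a_j+L_j,-1)\in U$ gives $\hat{x_j-\xi_j}\cdot\hat{y-\xi_j}\to 2/\sqrt5$, so $\mc{O}(x_j)$ stays bounded away from $0$. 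Your own ``main obstacle'' paragraph half-concedes this, but it then contradicts what Step 1 asserts, and the chain Step 1 $\Rightarrow$ Step 2 collapses.

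Second, even where a cone condition does hold, Step 2 is not available under the mere Hypothesis~\ref{hyp:minimalspeed}. The implication ``cone condition $\Rightarrow$ planarity of the relevant elements of $\Omega(u)$'' is \cite[Theorem~2.2]{HR2}, whose proof uses the strong Fisher-KPP structure \eqref{fkpp} in an essential way; for general $f$ it is exactly the conjecture stated at the end of Section~\ref{sec25}. Likewise, the Jones property \eqref{jones1} is specific to compactly supported data (reflection across any hyperplane disjoint from the convex hull of the support preserves the order of the initial data); for a subgraph satisfying only \eqref{gamma<0} the corresponding reflection argument fails, and making it work is precisely the role of the structural hypotheses (i)--(iv) of Theorem~\ref{thm:conical}, which yield the stronger conclusion \eqref{limit3chi}. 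There is also a logical slip in how the liminf is handled: you ``take a sequence realizing the liminf'' and argue along it, but to bound a liminf from above one must \emph{construct} a good sequence, and your argument, were it valid, would apply to every sequence $t_n\to+\infty$ and hence prove the full limit of Conjecture~\ref{conj1}, which is open. The paper's proof is entirely different: assuming a persistent slope $\omega>0$ on a large ball for all large times, it uses the monotonicity in $x_N$ to place a ball of radius $\rho$ on which $u\ge\theta$ at height $\omega n$ above the level set and horizontal distance $n$ from it, lets the invasion property propagate from that ball at speed close to $c^*$ in a tilted direction back over the original column, and iterates to produce a vertical spreading speed $c\sqrt{1+\omega^2}>c^*$, contradicting Lemma~\ref{lem:X/t} (which is where \eqref{gamma<0} actually enters, via comparison with conical supersolutions); this is also why the restriction $\lambda\ge\theta$ appears.
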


Notice that, in dimension~$N=2$, property~\eqref{liminf} means that
$$\liminf_{t\to+\infty}\Big(\min_{[-R,R]}|\partial_{x_1}X_\lambda(t,\cdot)|\Big)\ \longrightarrow\ 0\ \hbox{ as }R\to+\infty,$$
for every $\lambda\in[\theta,1)$, and that an analogous consideration holds for~\eqref{liminf2}.

Theorem~\ref{thm:subgraph} will be proved in Section~\ref{sec51}. Roughly speaking, the conclusion says that the level set of any value $\lambda\in[\theta,1)$ becomes almost flat in some directions along some sequences of points and some sequences of times converging to $+\infty$. We point out that the estimates on $\nabla_{\!x'}X_\lambda(t,x')$ immediately imply analogous estimates on $\nabla_{\!x'}u(t,x',X_\lambda(t,x'))$, because
\Fi{DchiDu}
\nabla_{\!x'}u(t,x',X_\lambda(t,x'))=-\partial_{x_N}u(t,x',X_\lambda(t,x'))\nabla_{\!x'}X_\lambda(t,x'),
\Ff
and $\partial_{x_N}u$ is bounded in $[1,+\infty)\times\R^N$ from standard parabolic estimates.\footnote{\ Since $u$ is of class $C^1$ in $(0,+\infty)\times\R^N$ and the function $(\lambda,t,x')\mapsto X_\lambda(t,x')$ is continuous in~$(0,1)\times(0,+\infty)\times\R^{N-1}$, it follows that the function $(\lambda,t,x')\mapsto\nabla_{\!x'}X_\lambda(t,x')$ is also continuous in~$(0,1)\times(0,+\infty)\times\R^{N-1}$.} Hence, the conclusions~\eqref{liminf}-\eqref{liminf2} imply that
$$\liminf_{t\to+\infty}\Big(\min_{|x'|\le R,\,1\le i\le N-1}|\nabla_{\!x'}u(t,x',X_\lambda(t,x'))\cdot\mathrm{e}'_i|\Big)\ \longrightarrow\ 0\ \hbox{ as }R\to+\infty$$
and
$$\sup_{x'_0\in\R^{N-1}}\Big[\liminf_{t\to+\infty}\Big(\min_{x'\in\overline{B'_R(x'_0)},\,1\le i\le N-1}|\nabla_{\!x'}u(t,x',X_\lambda(t,x'))\cdot\mathrm{e}'_i|\Big)\Big]\ \longrightarrow\ 0\ \hbox{ as }R\to+\infty,$$
for every $\lambda\in[\theta,1)$ and every basis $(\mathrm{e}'_1,\cdots,\mathrm{e}'_{N-1})$ of $\R^{N-1}$. The proof of~\eqref{liminf}-\eqref{liminf2} is done by way of contradiction and uses the fact that the level value $\lambda$ belongs to the interval~$[\theta,1)$, where $\theta\in(0,1)$ is given by Hypothesis~\ref{hyp:invasion}. Since the interface between the values~$0$ and~$1$ is initially sharp, we expect, as in the one-dimensional case handled in~\cite{N}, that the transition between~$0$ and~$1$ has a uniformly bounded width (in the sense of~\cite{BH2}) in the direction $x_N$ if, say, $\gamma$ is Lipschitz continuous (although the proof of this property does not extend easily in dimensions $N\ge 2$). If so, it would follow from the proof of Theorem~\ref{thm:subgraph} that~\eqref{liminf}-\eqref{liminf2} would then hold for any $\lambda\in(0,1)$. Actually, if $f$ is positive in~$(0,1)$, Hypothesis~\ref{hyp:invasion} is satisfied for any $\theta\in(0,1)$  and it follows from Theorem~\ref{thm:subgraph} that~\eqref{liminf}-\eqref{liminf2} then hold for all $\lambda\in(0,1)$.

We stress that, without the assumption~\eqref{gamma<0}, the conclusions~\eqref{liminf}-\eqref{liminf2} is imme\-diately seen to fail in general (a trivial counterexample is given by the solution with initial condition~\eqref{defu0bis} with $\gamma(x')=x'\cdot e'$ for some nonzero vector~$e'\in\R^{N-1}$, whose level sets are hyper\-planes in $\R^N$ orthogonal to $(e',-1)$ for all $t>0$, hence $\nabla_{\!x'}X_\lambda(t,x')=e'$ for all $\lambda\in(0,1)$, $x'\in\R^{N-1}$ and $t>0$ by~\eqref{DchiDu}). Moreover, if one assumes that $\liminf_{|x'|\to+\infty}\gamma(x')/|x'|\ge0$ instead of~\eqref{gamma<0}, counterexamples to~\eqref{liminf}-\eqref{liminf2} are provided by rotated $V$-shaped fronts, see~Proposition~\ref{pro46}~(i) below. However, with the assumption~\eqref{gamma<0}, we expect that the liminf of the minimum can be replaced by a limit in~\eqref{liminf}, without any reference to the size $R$, leading to the following conjecture.

\begin{conjecture}\label{conj1}
Under the assumptions of Theorem~$\ref{thm:subgraph}$, the conclusion~\eqref{liminf} can be strengthened by the limit
\be\label{limit2}
\nabla_{\!x'}X_\lambda(t,x')\rightarrow 0\ \hbox{ as $t\to+\infty$, \ locally uniformly in $x'\!\in\!\R^{N-1}$},
\ee
for every $\lambda\in[\theta,1)$.
\end{conjecture}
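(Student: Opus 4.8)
The plan is to deduce the locally uniform convergence \eqref{limit2} from a sharper qualitative fact about the $\Omega$-limit set: under the hypotheses of \thm{subgraph}, $u$ is \emph{asymptotically locally one-dimensional with the single admissible direction} $\mathrm{e}_N$, in the sense that every $\psi\in\Omega(u)$ with $0<\psi<1$ in $\R^N$ is of the form $\psi(x)=\Psi(x_N)$ with $\Psi:\R\to(0,1)$ strictly decreasing (in particular no plateau limit $\psi\equiv\lambda$ can occur). Granting this, \eqref{limit2} follows by a compactness/contradiction argument in the spirit of Section~\ref{sec251}: if it failed for some $\lambda\in[\theta,1)$, there would be $R>0$, $\epsilon>0$, $t_n\to+\infty$ and $|x_n'|\le R$ with $|\nabla_{\!x'}X_\lambda(t_n,x_n')|\ge\epsilon$; putting $x_n:=(x_n',X_\lambda(t_n,x_n'))$ and extracting, $u(t_n,x_n+\cdot)\to\psi$ in $C^2_{loc}(\R^N)$ with $\psi(0)=\lambda$, hence $0<\psi<1$ and, by the above, $\psi=\Psi(x_N)$ with $\Psi$ strictly decreasing. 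Applying the strong maximum principle to the non-positive solution $\partial_{x_N}v$ of the equation linearized along the entire solution $v$ with $v(0,\cdot)=\psi$ (if $\partial_{x_N}v$ vanished at an interior point it would vanish identically for $t\le 0$, forcing $\psi$ constant), one gets $\Psi'(0)<0$; passing to the limit in \eqref{DchiDu} then yields $\nabla_{\!x'}X_\lambda(t_n,x_n')=-\nabla_{\!x'}u(t_n,x_n)/\partial_{x_N}u(t_n,x_n)\to -\nabla_{\!x'}\psi(0)/\Psi'(0)=0$, a contradiction; the $\sup$-over-$x_0'$ form is identical. Thus everything is reduced to the asymptotic one-dimensional symmetry in the direction $\mathrm{e}_N$.

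Two structural features, absent in the general convex-like setting of \cite{HR2}, are available here. First, $\partial_{x_N}u<0$, so each level set is the graph of $X_\lambda(t,\cdot)$ and the whole $1\to 0$ transition is vertically ordered. Second, \eqref{gamma<0} gives, for every $\delta>0$, a constant $C_\delta$ with $U\subseteq U^\delta:=\{x_N\le \delta|x'|+C_\delta\}$, whence $u\le\bar u^\delta$, where $\bar u^\delta$ solves \eqref{homo} with initial datum $\1_{U^\delta}$; since $U^\delta$ is the subgraph of the Lipschitz function $x'\mapsto\delta|x'|+C_\delta$ it satisfies both \eqref{hyp:U} and \eqref{dUrho}, so the results of Section~\ref{sec23} apply to $\bar u^\delta$ and, via \eqref{FGgeneral}--\eqref{Wshift}, give that $\bar u^\delta$ spreads in the direction $\mathrm{e}_N$ with speed $c^*\sqrt{1+\delta^2}$. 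Combining $u\le\bar u^\delta$ (letting $\delta\downarrow 0$) with the invasion lower bound $u\ge\underline u$, where $\underline u$ solves \eqref{homo} from $\theta\1_{B_\rho(\xi_0)}$ for a fixed ball $B_\rho(\xi_0)\subseteq U$ and $\underline u$ satisfies \eqref{c<c*}, one obtains the macroscopic picture $X_\lambda(t,x')=c^*t+o(t)$ as $t\to+\infty$, locally uniformly in $x'$. The strategy is then to promote this ``$o(t)$'' to ``$O(1)$'' and finally to a genuine front-like profile.

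The core step is a \emph{uniformly bounded transition width} lemma: for all $0<\lambda_2<\lambda_1<1$ and all $R>0$,
\[
\sup_{t\ge 1,\ |x'|\le R}\big(X_{\lambda_2}(t,x')-X_{\lambda_1}(t,x')\big)<+\infty .
\]
The idea is to sandwich the solution, in the moving frame $z=x_N-c^*t$, between translates of the minimal travelling front: from above by exploiting the cone barriers $\bar u^\delta$ near their valley bottom and letting $\delta\to 0$, and from below by sliding fronts $\varphi^*(x_N-c^*t+a)$ under $u$ with the help of Hypothesis~\ref{hyp:invasion} and \eqref{c<c*}, the positions of the shifts being controlled by the macroscopic picture above. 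Granting this lemma, any limit $\psi$ of $u(t_n,x_n+\cdot)$ at a level $\lambda\in[\theta,1)$ satisfies $\psi\to 1$ as $x_N\to-\infty$ and $\psi\to 0$ as $x_N\to+\infty$, uniformly in $x'$ (so the plateau case is excluded), and moreover the entire solution $v$ with $v(0,\cdot)=\psi$ is trapped for all $t\in\R$ between two fixed vertical translates $\varphi^*(x_N-c^*t+a)\le v(t,x)\le \varphi^*(x_N-c^*t+b)$, with $-\infty<a\le b<+\infty$; being entire and monotone in $x_N$, $v$ must then coincide with one such front by a standard Liouville-type squeezing argument (using uniqueness up to shift of the profile of speed $c^*$). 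Hence $\psi=\varphi^*(x_N+a)$ is one-dimensional and strictly decreasing --- in fact this yields the full characterization $\Omega(u)=\{0,1\}\cup\{x\mapsto\varphi^*(x_N+a):a\in\R\}$, coherently with \eqref{conjOmega} --- which closes the reduction of the first paragraph.

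The main obstacle is the bounded-width lemma in dimension $N\ge 2$: as noted after \thm{subgraph}, it is known for $N=1$ (following \cite{N}, in the sense of \cite{BH2}) but open for $N\ge 2$, even for Lipschitz $\gamma$. The difficulty is twofold. On the one hand, the upper cone barrier $\bar u^\delta$ only sits a slope-$\delta$ amount above the true interface, and for each fixed $\delta>0$ its own transition width need not stay bounded in time; controlling the interface therefore seems to require a delicate joint limit in which $\delta\to 0$ as $t\to+\infty$ at a suitable rate, with no obvious monotonicity to exploit. On the other hand, since \eqref{gamma<0} is much weaker than the vanishing-global-mean condition underlying \eqref{ballcone} --- it permits $\gamma$ to oscillate with linear-order oscillations --- one must, along the way, rule out tilted and conical limit profiles, which again reduces to a bounded-width type control for the barriers $\bar u^\delta$ as $\delta\to 0$. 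If only a weaker ``$\liminf$-in-$t$'' form of bounded width can be secured, the scheme degrades exactly to the $\liminf$-over-balls conclusions \eqref{liminf}--\eqref{liminf2} already established in \thm{subgraph}, which is precisely the gap between that theorem and Conjecture~\ref{conj1}.
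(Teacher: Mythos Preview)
This statement is Conjecture~\ref{conj1}, and the paper does \emph{not} prove it in the stated generality --- it is left open. What the paper does establish are strictly weaker versions (Propositions~\ref{pro:trapped}, \ref{flatteningHT}, \ref{cor:flatteningKPP}) and one genuine special case, Corollary~\ref{cor:DGlag}, which requires the strong Fisher--KPP hypothesis~\eqref{fkpp} together with the much more restrictive decay condition~\eqref{gamma<} on~$\gamma$. In that special case the paper's route is: first obtain the sharp position estimate $X_\lambda(t,x')=c^*t-\frac{N+2}{c^*}\log t+O(1)$ via Theorem~\ref{thm:normdelay}, which in particular yields the bounded transition width $X_{\ul\lambda}(t,x')-X_{\ol\lambda}(t,x')=O(1)$; then combine this with the $x'$-derivative decay $\nabla_{\!x'}u\to0$ coming from \cite[Theorem~7.2]{HR2} and argue by contradiction through~\eqref{DchiDu}.

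Your proposal is not a proof but a strategy, and you are candid about this in the final paragraph: the ``core step'' bounded-width lemma is precisely the missing ingredient, and you correctly flag it as open for $N\ge2$. Your identification of this obstacle aligns exactly with the paper's own discussion after Theorem~\ref{thm:subgraph} and with the mechanism of Corollary~\ref{cor:DGlag}. Two further points deserve mention. First, even granting bounded width, your Liouville step --- that an entire solution trapped between two shifts of the minimal front must itself be such a front --- is not available in the generality of Hypothesis~\ref{hyp:minimalspeed}; such rigidity is known in the bistable and KPP settings but not for arbitrary~$f$ admitting a minimal-speed front. Second, your reduction in the first paragraph tacitly assumes that limits~$\psi$ along sequences with bounded~$x'_n$ depend on~$x_N$ only; under~\eqref{fkpp} this is supplied by \cite[Theorem~7.2]{HR2}, but under the bare Hypothesis~\ref{hyp:minimalspeed} of Theorem~\ref{thm:subgraph} it is itself part of what must be proved (condition~\eqref{gamma<0} does not imply~\eqref{ballcone}). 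So the gap between your outline and a proof is exactly the gap the paper leaves open.
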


Even for $x'$-symmetric solutions $u$, property~\eqref{limit2} does not hold in general without the assumption~\eqref{gamma<0} of Theorem~\ref{thm:subgraph} (as for~\eqref{liminf}-\eqref{liminf2}, counterexamples are given by $V$-shaped fronts, see Proposition~\ref{pro46}~(ii) for further details). We also point out that, even with the assumption~\eqref{gamma<0}, property~\eqref{limit2} does not hold in general {\em uniformly} with respect to $x'\in\R^{N-1}$ (for instance, in dimension $N=2$, easy counter\-examples are given by nonpositive functions $\gamma$ with a negative slope as $x_1\to+\infty$, see Proposition~\ref{pro46}~(iii) for further details). On the other hand, a strong support to the validity of Conjecture~\ref{conj1} is provided by the results cited in Section~\ref{sec23}. Indeed,~\cite[Theorem~2.4]{HR1} asserts that, under assumption~\eqref{dUrho} (for instance if~\eqref{unifosc} holds), then $F_\lambda(t)\sim U+B_{c^*t}$ for $t$ large for any $\lambda\in(0,1)$, in the sense of~\eqref{dH}, and one can check that condition~\eqref{gamma<0} entails that the exterior unit normals to the set $U+B_{c^*t}$ at the points $(x',x_N)\in\partial(U+B_{c^*t})$ (whenever they exist) approach the vertical direction $\mathrm{e}_N=(0,\cdots,0,1)$ as $t\to+\infty$, locally uniformly with respect to $x'\in\R^{N-1}$. Hence the same is expected to hold for the sets $F_\lambda(t)$, which is what~\eqref{limit2} asserts. This kind of argument can be made rigorous, building on the results of Section~\ref{sec23}, and lead to a weaker version of Conjecture~\ref{conj1}, see Proposition~\ref{pro:trapped} below. We derive two other weaker forms of Conjecture~\ref{conj1}: the first one in the case where~$f$ fulfills~\eqref{HTconditions}, see Proposition~\ref{flatteningHT}, the second one when $f$ is of the strong Fisher-KPP type~\eqref{fkpp}, see Proposition~\ref{cor:flatteningKPP}. This latter result asserts that the conclusion~\eqref{limit2} holds up to subsequences, and relies on the results about the asymptotic local one-dimensional symmetry of~\cite{HR2}. As~for the full Conjecture~\ref{conj1}, we will prove it  in the strong Fisher-KPP case, provided  that condition~\eqref{gamma<0} is strengthened by a suitable divergence to $-\infty$ of $\gamma$ at infinity, see Corollary~\ref{cor:DGlag} below; this result is proved combining the asymptotic local one-dimensional symmetry of~\cite{HR2} with a new result about the location of the level sets of solutions, that we derive under those assumptions in Section~\ref{seclag2}.

Another situation in which we are able to obtain the conclusion of Conjecture~\ref{conj1} is when the initial condition~$u_0$ has an asymptotically $x'$-symmetric conical support or is the subgraph of an axi\-symmetric nonincreasing function. Here is the precise result, which actually holds under the weaker Hypothesis~\ref{hyp:invasion} instead of Hypothesis~\ref{hyp:minimalspeed}.

\begin{theorem}\label{thm:conical}
Assume that Hypothesis~$\ref{hyp:invasion}$ holds. Let $u$ be the solution of~\eqref{homo} with an initial datum $u_0$ given by~\eqref{defu0bis}, where the function $\gamma\in L^\infty_{loc}(\R^{N-1})$ satisfies one of the following assumptions:
\begin{itemize}
\item[{\rm{(i)}}] either $\gamma$ is of class~$C^1$ outside a compact set and there is $\ell\ge0$ such that
\be\label{hypconical}
\hspace{-8pt}\left\{\baa{ll}
\!\!\!\gamma'(x_1)\to\mp\ell\ \hbox{ as }x_1\to\pm\infty & \hbox{if $\!N\!=\!2$},\vspace{3pt}\\
\displaystyle\!\!\!\nabla\gamma(x')=-\ell\,\frac{x'}{|x'|}+O(|x'|^{-1-\eta})\ \hbox{ as }|x'|\to+\infty,\hbox{ for some $\eta>0$,} & \hbox{if }\!N\!\ge\!3\eaa\right.
\ee
$($in dimension $N=3$, by writing $\gamma(x')=\tilde{\gamma}(r,\vartheta)$ in the standard polar coordinates, that means that $\partial_r\tilde{\gamma}(r,\vartheta)=-\ell+O(r^{-1-\eta})$ and $\partial_\vartheta\tilde{\gamma}(r,\vartheta)=O(r^{-\eta})$ as $r\to+\infty$$)$;
\item[{\rm{(ii)}}] or $\gamma$ is continuous outside a compact set and $\gamma(x')/|x'|\to-\infty$ as~$|x'|\to+\infty$;
\item[{\rm{(iii)}}] or $\gamma(x')=\Gamma(|x'-x'_0|)$ outside a compact set,  for some $x'_0\in\R^{N-1}$ and some con\-tinuous nonincreasing function $\Gamma:\R^+\to\R$;
\item[{\rm{(iv)}}]  or $\gamma(x')=\Gamma(|x'-x'_0|)$ outside a compact set,  for some $x'_0\in\R^{N-1}$ and some $C^1$ function $\Gamma:\R^+\to\R$ such that  $\Gamma'(r)\to0$ as $r\to+\infty$.
\end{itemize}
Then, for every $\lambda_0\in(0,1)$, there holds that
\be\label{limit3chi}
\nabla_{\!x'}X_\lambda(t,x')\rightarrow 0\hbox{ \ as $t\to+\infty$, locally in $x'\!\in\!\R^{N-1}$ and uniformly in $\lambda\!\in\!(0,\lambda_0]$}
\ee
and moreover
\be\label{limit3u}
\nabla_{\!x'}u(t,x',x_N)\rightarrow 0\hbox{ \ as $t\to+\infty$, locally in $x'\!\in\!\R^{N-1}$ and uniformly in $x_N\!\in\!\R$}.
\ee
\end{theorem}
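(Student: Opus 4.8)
The plan is to prove the gradient estimate~\eqref{limit3u} for $u$ and then deduce~\eqref{limit3chi} from it. The link between the two is identity~\eqref{DchiDu}: since $\partial_{x_N}u$ is bounded on $[1,+\infty)\times\R^N$, \eqref{limit3u} gives $\nabla_{\!x'}u\to0$ along any level set $\{u=\lambda\}$, and to convert this into $\nabla_{\!x'}X_\lambda\to0$ one must also rule out that $|\partial_{x_N}u|$ degenerates there; I would handle this separately according to whether $f(\lambda)=0$, using the strong maximum principle and $\partial_{x_N}u<0$ when $f(\lambda)\ne0$, and otherwise folding those values of $\lambda$ into the proof of~\eqref{limit3u} itself. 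I would then recast~\eqref{limit3u} via blow-ups: if it failed, there would be $\delta>0$, $R>0$, $t_n\to+\infty$, $|x'_n|\le R$ and $x_{N,n}\in\R$ with $|\nabla_{\!x'}u(t_n,x'_n,x_{N,n})|\ge\delta$; extracting $x'_n\to x'_\infty$ and using parabolic estimates, $u(t_n,(x'_n,x_{N,n})+\cdot)$ converges in $C^{1;2}_\loc$ to an entire solution $v$ of~\eqref{homo} with $0<v<1$, $\partial_{x_N}v<0$ and $|\nabla_{\!x'}v(0,0)|\ge\delta$; the goal becomes to show $\nabla_{\!x'}v\equiv0$, a contradiction.

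Two facts come first. In each of the cases (i)--(iv) the function $\gamma$ is bounded below on every bounded set, so $U$ contains balls of arbitrarily large radius; Hypothesis~\ref{hyp:invasion} then yields invasion, $u(t,\cdot)\to1$ locally uniformly in $\R^N$, hence $X_\lambda(t,x')\to+\infty$ locally uniformly in $x'$ and uniformly in $\lambda\in(0,\lambda_0]$ (in particular $x_{N,n}\to+\infty$ above). Comparing $u$ from below with translates of the compactly supported invading solution issued from $\theta\,\1_{B_\rho(y_0)}$ over the balls $B_\rho(y_0)\subset U$, and using the invasion estimate~\eqref{c<c*} (valid under Hypothesis~\ref{hyp:invasion}), one gets $F_\lambda(t)\supset U_\rho+B_{c_0t}$ for all large $t$ and some $c_0>0$; since $U$ contains arbitrarily large balls this traps the level set above spherical caps of radius tending to $+\infty$, which already captures the flattening ``from below''. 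What is missing --- and this is the core of the theorem, going beyond~\thm{subgraph} --- is a matching upper control pinning the slope of $X_\lambda(t,\cdot)$ to $0$ rather than merely to $o(1)$ along subsequences.

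For the axisymmetric cases (iii)--(iv) I would argue by reflection. In case (iii), $u_0$ is, at every height $x_N$, radially non-increasing in $|x'-x'_0|$: reflecting across any hyperplane that contains the axis $\{x'=x'_0\}$ and separates a point from that axis moves the point closer to the axis, and since $\Gamma$ is non-increasing this does not decrease $u_0$ there; the parabolic comparison principle then keeps $u(t,\cdot)$ radially non-increasing in $|x'-x'_0|$ for all $t>0$, so $x'\mapsto X_\lambda(t,x')$ is radial and non-increasing. Together with the spherical-cap lower bound, the oscillation of $X_\lambda(t,\cdot)$ over $|x'-x'_0|\le R$ tends to $0$; since $u(t,\cdot)$ has uniformly bounded $C^2$ norm for $t\ge1$ and $\partial_{x_N}u<0$, the functions $X_\lambda(t,\cdot)$ are uniformly bounded in $C^{1,\alpha}$ there, and a standard interpolation inequality turns the vanishing of the oscillation (equivalently, of the $L^1$ norm of the radial derivative) into $\nabla_{\!x'}X_\lambda(t,x')\to0$ uniformly on $|x'-x'_0|\le R$, uniformly in $\lambda\le\lambda_0$. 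Case (iv) follows by trapping $\Gamma$, on each bounded interval, between non-increasing functions up to an error controlled by $\sup_{r\ge R}|\Gamma'(r)|\to0$. For the conical and fast-decay cases (i)--(ii), where $U$ need not be axisymmetric, I would instead exploit the blow-up limit $v$ directly: the key geometric feature is that $\R^N\setminus U$ ``opens upward'' at infinity --- for (ii) it eventually contains every downward cone $\{x_N>-m|x'|+C_m\}$, for (i) the complement of a downward cone whose aperture is governed by $\ell$. Replacing $\gamma$ from below and from above by globally $(\ell+\varepsilon)$-Lipschitz functions at bounded Hausdorff distance, whose subgraphs are invariant under translations in the cone $\{d_N\ge(\ell+\varepsilon)|d'|\}$, the corresponding sub- and supersolutions are monotone in that cone for all $t>0$; sandwiching $u$ between them forces the exterior normals to $F_\lambda(t)$ over bounded $x'$ towards $\mathrm{e}_N$, and letting $\varepsilon\to0$ (legitimate since $\ell=0$ in the non-conical subcase and since only the aperture of the cone matters) yields $\nabla_{\!x'}v\equiv0$.

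The main obstacle, I expect, is precisely this matching step. Under Hypothesis~\ref{hyp:invasion} alone there need not be any planar traveling front, and in the presence of propagating terraces the $\lambda$-level set need not travel at a well-defined speed, so one cannot squeeze it between two fronts; the vertical uncertainty on $X_\lambda(t,\cdot)$ is $o(t)$ but a priori unbounded, and the delicate task is to convert the large-scale geometry of $U$ into the fact that this uncertainty cannot produce a persistent non-horizontal feature --- a ``$V$-shaped corner'' --- in the level sets over bounded horizontal regions. In the axisymmetric cases radial monotonicity circumvents this by collapsing the uncertainty into a vanishing oscillation; in the conical cases it is where the bulk of the technical work lies, and the ancillary difficulty that $|\partial_{x_N}u|$ may degenerate on the level sets near a terrace step --- so that the equivalence~\eqref{limit3chi}$\Leftrightarrow$\eqref{limit3u} and the $C^{1,\alpha}$ bounds on $X_\lambda$ cannot be taken for granted --- has to be dealt with in tandem.
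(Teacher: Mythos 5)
Your proposal has a genuine gap, and it is located exactly where you say "the bulk of the technical work lies": you never supply the mechanism that converts the large-scale geometry of $U$ into the vanishing of $\nabla_{\!x'}X_\lambda$ over bounded sets. The paper's proof is a contradiction argument built on a Jones-type \emph{reflection}: if $\nabla_{\!x'}X_{\lambda_n}(t_n,x'_n)\cdot e'_n=\sigma_n\le-2\epsilon<0$ along sequences with $t_n\to+\infty$ and $(x'_n)$ bounded, one reflects $u$ across the hyperplane through $(x'_n,X_{\lambda_n}(t_n,x'_n))$ orthogonal to $(e'_n,\sigma_n)$ (i.e.\ orthogonal to $\nabla u$ there, by~\eqref{DchiDu}); since $X_{\lambda_n}(t_n,x'_n)\to+\infty$ (by invasion) this hyperplane is steep and very high, and each of the hypotheses (i)--(iv) is used precisely to check that the reflection maps $\supp u_0\setminus\O_n$ into $\supp u_0$ for large $n$. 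The strong maximum principle then gives $u(t,\mc R_n(\cdot))<u$ in $\O_n$, and the Hopf lemma at $(x'_n,y_n)$ contradicts the invariance of $\nabla u(t_n,x'_n,y_n)$ under the reflection. Your substitutes do not reach this conclusion. For (iii)--(iv), radial monotonicity of $X_\lambda(t,\cdot)$ together with the spherical-cap inclusion $F_\lambda(t)\supset U_\rho+B_{c_0t}$ bounds the level set only from \emph{below}, with a vertical uncertainty between $X_\lambda(t,x'_0)$ and $\min_{\partial B'_R(x'_0)}X_\lambda(t,\cdot)$ that is a priori $o(t)$, not $o(1)$; a radially non-increasing graph sitting above a huge spherical cap can still have oscillation bounded away from $0$ on $B'_R(x'_0)$, so the interpolation step has nothing to interpolate from. (In addition, the global radial monotonicity of $u(t,\cdot)$ is not available, since $\gamma$ is only radial \emph{outside a compact set}; the paper's reflection only needs the monotonicity far away, where the reflected points land.) For (i)--(ii), monotonicity of comparison sub/supersolutions in the directions of a cone $\{d_N\ge(\ell+\varepsilon)|d'|\}$ does not transfer to $u$ itself, and even if $u$ inherited it, it would only yield the Lipschitz bound $|\nabla_{\!x'}X_\lambda|\le\ell+\varepsilon$, not convergence to $0$; sandwiching between two solutions whose level sets are not within $o(1)$ of each other does not constrain the normals of $E_\lambda(t)$.

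A secondary but real issue is the order of deduction. You propose to prove~\eqref{limit3u} first and derive~\eqref{limit3chi} from it; that direction of~\eqref{DchiDu} requires a \emph{lower} bound on $|\partial_{x_N}u|$ along the level sets, which is exactly what can fail under Hypothesis~\ref{hyp:invasion} alone (terrace-type behaviour), and "folding those values of $\lambda$ into the proof of~\eqref{limit3u}" is not an argument. The paper goes the other way: it proves~\eqref{limit3chi} directly by the reflection argument (which needs no non-degeneracy of $\partial_{x_N}u$), and then~\eqref{limit3u} follows from the easy direction $|\nabla_{\!x'}u|=|\partial_{x_N}u|\,|\nabla_{\!x'}X_\lambda|$ together with a compactness argument for the points where $u\to1$.
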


Theorem~\ref{thm:conical} is proved in Section~\ref{sec53}. We observe at once that it entails that the level sets locally flatten at large time around points with bounded $x'$-coordinates. Furthermore, remembering the definition~\eqref{def:Omega} of $\Omega(u)$, it yields that the elements $\psi$ of $\Omega(u)$ corres\-ponding to sequences $(x_n)_{n\in\N}$ with $\sup_{n\in\N}|x'_n|<+\infty$ are then necessarily functions of~$x_N$ only. In particular, the solution $u$ becomes asymptotically locally one-dimensional in any region of $\R^N$ with bounded $x'$-coordinates.

On the other hand, it is easy to see that, even under Hypothesis~\ref{hyp:minimalspeed} (which is stronger than Hypothesis~\ref{hyp:invasion}), if~\eqref{hypconical} holds with $\ell>0$, then the convergence in~\eqref{limit3chi} cannot be uniform with respect to~$x'\in\R^{N-1}$, see Proposition~\ref{pro46}~(iv) for further details. In other words, if the initial interface between the states $0$ and $1$ has a non-zero slope at infinity, then the level sets cannot become uniformly flat at large time. This observation naturally leads to the following conjecture.
	
\begin{conjecture}\label{conj:order1}
Assume that Hypothesis $\ref{hyp:minimalspeed}$ holds $($hence Hypothesis~$\ref{hyp:invasion}$ as well$)$. Let $u$ be the solution of~\eqref{homo} with an initial datum $u_0$ given by~\eqref{defu0bis}. If $\gamma$ is of class $C^1$ outside a compact set and if
\Fi{gamma'to0}
\lim_{|x'|\to+\infty}\nabla\gamma(x')=0,
\Ff
then, for every $\lambda_0\in(0,1)$,
\be\label{Dchi0}
\nabla_{\!x'}X_\lambda(t,x')\rightarrow 0\hbox{ \ as $t\to+\infty$, uniformly in $x'\!\in\!\R^{N-1}$ and in $\lambda\!\in\!(0,\lambda_0]$}
\ee
and moreover
\be\label{Du0}
\nabla_{\!x'}u(t,x)\rightarrow 0\hbox{ \ as $t\to+\infty$, uniformly in $x\!\in\!\R^N$}.
\ee
\end{conjecture}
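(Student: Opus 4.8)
First, the hypotheses place us inside the framework of Section~\ref{sec23}: a function $\gamma\in L^\infty_{loc}(\R^{N-1})$ which is $C^1$ outside a compact set with $\nabla\gamma\to0$ at infinity has uniformly bounded local oscillations in the sense of~\eqref{unifosc}, so the subgraph $U$ in~\eqref{Ugamma} satisfies~\eqref{dUrho}; moreover $\nabla\gamma\to0$ gives $|\gamma(x')-\gamma(y')|=o(|x'-y'|)$ as $|x'-y'|\to+\infty$ (vanishing global mean), so $U$ also satisfies~\eqref{ballcone}. I~would then reduce everything to the \emph{rigidity claim}: \emph{every $\psi$ in the $\Omega$-limit set $\Omega(u)$ of~\eqref{def:Omega} depends on the variable $x_N$ only.}

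The reduction of~\eqref{Dchi0}--\eqref{Du0} to this claim is standard. If~\eqref{Du0} failed, one would pick $\delta>0$, $t_n\to+\infty$ and $x_n\in\R^N$ with $|\nabla_{x'}u(t_n,x_n)|\ge\delta$, extract (parabolic estimates) a $C^2_{loc}$-limit $\psi\in\Omega(u)$ of $u(t_n,x_n+\cdot)$, and contradict $\nabla_{x'}\psi(0)=0$. For~\eqref{Dchi0} one combines this with~\eqref{DchiDu} and a uniform lower bound $-\partial_{x_N}u\ge c_0>0$ near the level sets $\{u=\lambda\}$ for $\lambda\in(0,\lambda_0]$ and large $t$; such a bound follows from the rigidity claim by compactness once one knows that no element of $\Omega(u)$ equals a constant in $(0,1)$, which is immediate when $f>0$ in $(0,1)$ and, for general $f$, follows from the fact that under Hypothesis~\ref{hyp:minimalspeed} invading solutions do not develop intermediate plateaus, together with the monotonicity in $x_N$ and the strong maximum principle applied to $\partial_{x_N}\psi$.

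For the rigidity claim, the plan is to combine the known (or conjectured) flattening results of Section~\ref{sec25} with a sliding argument that fixes the planarity direction. For $s\in\R^{N-1}$ let $G(s):=\sup_{x'\in\R^{N-1}}\big(\gamma(x')-\gamma(x'-s)\big)$; this is finite, and $\nabla\gamma\to0$ yields the crucial estimate $G(s)=o(|s|)$ as $|s|\to+\infty$ --- indeed, given $\varepsilon>0$ and $R_\varepsilon$ with $|\nabla\gamma|\le\varepsilon$ outside the ball $B'_{R_\varepsilon}$, any segment from $x'-s$ to $x'$ meets $B'_{R_\varepsilon}$ along a length $\le 2R_\varepsilon$, whence $G(s)\le C_\varepsilon+\varepsilon|s|$, the bounded ``core'' of $\gamma$ contributing only the harmless $O(1)$ term $C_\varepsilon$. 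Since $u_0\big(\cdot-(s,G(s))\big)\ge u_0$ in $\R^N$, the parabolic maximum principle gives $u\big(t,\cdot-(s,G(s))\big)\ge u(t,\cdot)$ for all $t>0$, and passing to the limit along the sequence defining $\psi$ yields $\psi\big(\cdot-(s,G(s))\big)\ge\psi$ for all $s\in\R^{N-1}$. If $\psi$ is nonconstant with $\partial_{x_N}\psi<0$ (the generic case, by the strong maximum principle applied to $\partial_{x_N}\psi$), the level set $\{\psi=\lambda\}$ is the graph of a function $\chi$, and evaluating the last inequality on this graph with $\pm s$ gives $-G(s)\le\chi(x')-\chi(x'+s)\le G(-s)$, hence $\chi(y')=\chi(0')+o(|y'|)$ as $|y'|\to+\infty$: the level-set graphs of $\psi$ grow sublinearly. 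Now, \emph{if} one knew that $u$ is asymptotically locally planar, i.e.\ that every such $\psi$ is of the form $\psi(x)\equiv\Psi(x\cdot e)$ with $\Psi$ strictly monotone, then the sublinear growth of $\chi$ would force $e=\mathrm{e}_N$, proving the rigidity claim; for $f$ of the Fisher--KPP type~\eqref{fkpp} this planarity is precisely \cite[Theorems~2.2 and~2.4]{HR2} (applicable since~\eqref{ballcone} holds), which is the mechanism behind the weaker Proposition~\ref{cor:flatteningKPP}, and for bistable $f$ it follows from~\cite{BH1} and the references of Section~\ref{sec25}.

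The \textbf{main obstacle} is thus to prove the asymptotic local planarity of $u$ for a \emph{general} reaction $f$ satisfying Hypothesis~\ref{hyp:minimalspeed}; this is itself an open problem --- precisely the conjecture stated at the end of Section~\ref{sec25} --- because the structure of bounded entire solutions of~\eqref{homo} is not well understood without a specific shape of $f$, and the sliding/maximum-principle arguments above only control the level sets of $\psi$ at horizontal infinity, not their global shape. Two routes seem natural: either a Liouville-type theorem asserting that a bounded entire solution of~\eqref{homo} which is monotone in $x_N$ and whose level sets have vanishing global mean is necessarily one-dimensional in the direction $x_N$ --- which would settle the rigidity claim outright; or first proving, for Lipschitz $\gamma$, that the transition between the states $0$ and $1$ has uniformly bounded width in the direction $x_N$ (the $\R^N$ counterpart, in the sense of~\cite{BH2}, of the one-dimensional result of~\cite{N}), which would upgrade the $\liminf$ of Theorem~\ref{thm:subgraph} to a genuine limit, make the core contribution to $G(s)$ uniformly negligible, and then let one conclude via the sliding argument. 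Either way, the crux is a rigidity/classification statement for solutions of~\eqref{homo} in the full generality of Hypothesis~\ref{hyp:minimalspeed}.
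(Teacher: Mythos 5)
The statement you are addressing is Conjecture~\ref{conj:order1}: the paper offers no proof of it, and it is left open there. So the honest question is whether your reduction is sound and whether the obstacle you isolate is the real one. On the whole, yes: your preliminary observations (that \eqref{gamma'to0} yields \eqref{unifosc}, hence \eqref{dUrho}, and the vanishing-global-mean property, hence \eqref{ballcone}), your sliding inequality $u(t,\cdot-(s,G(s)))\ge u(t,\cdot)$ with $G(s)=o(|s|)$, and the conclusion that the level-set graphs of any nonconstant monotone $\psi\in\Omega(u)$ grow sublinearly, are all correct and consistent with the paper's framework. The obstruction you name --- asymptotic local planarity of $u$ for a general $f$ satisfying Hypothesis~\ref{hyp:minimalspeed} --- is exactly the conjecture formulated at the end of Section~\ref{sec25}, and in the strong KPP case~\eqref{fkpp} the chain you describe ([HR2, Theorems~2.2 and~2.4] plus $\mc{E}=\{\mathrm{e}_N\}$ for vanishing global mean) does give \eqref{Du0}. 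You have correctly located the crux.

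Two points in your write-up are inaccurate or incomplete, however. First, your claim that for bistable $f$ the needed planarity ``follows from~\cite{BH1} and the references of Section~\ref{sec25}'' overstates what is known: those results require $U$ to be trapped between two parallel half-spaces, i.e.\ $\gamma$ bounded, whereas \eqref{gamma'to0} allows $\gamma$ unbounded (e.g.\ $\gamma(x')\sim|x'|^{1/2}$ or $\sim-\log|x'|$); for such $\gamma$ the bistable case is just as open as the general one. Second, your derivation of \eqref{Dchi0} from \eqref{Du0} via a uniform lower bound $-\partial_{x_N}u\ge c_0>0$ on the level sets cannot deliver the stated uniformity in $\lambda\in(0,\lambda_0]$: along sequences with $u(t_n,x_n)=\lambda_n\to0^+$ the limit $\psi\in\Omega(u)$ is $\equiv0$ by the strong maximum principle, so $\partial_{x_N}\psi(0)=0$ and the compactness argument degenerates; both the numerator and denominator in \eqref{DchiDu} vanish in the limit, and a separate quantitative argument in the region where $u$ is small (comparison with explicit super/subsolutions, or a reflection argument applied uniformly over all levels $\lambda$ bounded away from $1$ only, as in the proof of Theorem~\ref{thm:conical}) would be needed to control the ratio there. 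Neither point changes your main conclusion that the statement reduces to an open rigidity/classification problem, but both should be flagged as additional gaps rather than routine steps.
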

	
Properties~\eqref{Dchi0}-\eqref{Du0} are known to hold if condition~\eqref{gamma'to0} is replaced by the boundedness of $\gamma$, at least for some classes of functions $f$ and with $\lambda\in(0,\lambda_0]$ in~\eqref{Dchi0} replaced by $\lambda\in[a,b]$, for any fixed $0<a\le b<1$. More precisely, if the function $f$ is of the bistable type~\eqref{bistable} these properties follow from some results in~\cite{BH1,FM,MN,MNT,RR1}, and the same conclusions hold for more general functions $f$ of the multistable type, see~\cite{P2}, or for KPP type functions~$f$ satisfying~\eqref{kpp0}, see~\cite{BH1,B,HNRR,L,U1}.  

However, by considering some functions $\gamma$ with large local oscillations at infinity, it turns out that both conclusions of Conjecture~\ref{conj:order1} cannot hold if~\eqref{gamma'to0} is replaced by the weaker condition $\lim_{|x'|\to+\infty}\gamma(x')/|x'|=0$, as asserted by the following result, whose proof is done in Section~\ref{sec54}.

\begin{proposition}\label{pro:asymp}
Conjecture~$\ref{conj:order1}$ fails in general if assumption~\eqref{gamma'to0} is replaced by  
\be\label{gamma0}
\nabla\gamma\in L^{\infty}(\R^{N-1})\ \hbox{ and }\ \lim_{|x'|\to+\infty}\frac{\gamma(x')}{|x'|}=0.\ee
\end{proposition}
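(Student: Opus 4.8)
The plan is to exhibit a reaction~$f$ and a function~$\gamma$ satisfying~\eqref{gamma0} but not~\eqref{gamma'to0} for which neither~\eqref{Dchi0} nor~\eqref{Du0} holds. I~would take $f$ bistable of the type~\eqref{bistable} with $\int_0^1f(s)\,ds>0$, so that Hypotheses~\ref{hyp:invasion} and~\ref{hyp:minimalspeed} hold and the minimal speed $c^*=c_0>0$ is attained by a unique front profile~$\varphi^*$. It is enough to treat $N=2$ and to let $\gamma$ depend on the first variable only, $\gamma(x')=\gamma_0(x_1)$: then $u$ is independent of $x_2,\dots,x_{N-1}$ by uniqueness for the Cauchy problem, so that $X_\lambda$ and $\nabla_{\!x'}u$ depend on $(t,x_1,x_N)$ alone and the failure of~\eqref{Dchi0}--\eqref{Du0} reduces to its two-dimensional counterpart, while $\nabla\gamma=\gamma_0'(x_1)\mathrm{e}_1$, so~\eqref{gamma0} holds whenever $\gamma_0$ is globally Lipschitz and sublinear and~\eqref{gamma'to0} fails whenever $\gamma_0'$ does not tend to~$0$ at infinity. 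Henceforth write $x=(x_1,x_2)$. Fix $L>0$, put $a_n:=4^n$ and $\ell_n:=2^n$, so that the intervals $I_n:=[a_n-\ell_n,a_n+\ell_n]$ are pairwise disjoint and $\ell_n/a_n\to0$, and construct $\gamma_0\in C^\infty(\R)$ with $\gamma_0\equiv0$ outside $\bigcup_nI_n$ and, on each~$I_n$, a smooth ``notch'' dipping downward that coincides with an affine function of slope~$-L$ on the sub-interval $J_n:=[a_n-\ell_n+1,a_n-1]$, the three complementary pieces of~$I_n$ being smooth slope-transitions (from~$0$ to~$-L$, from~$-L$ to~$+L$, and from~$+L$ to~$0$) along which $|\gamma_0'|\le L$. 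Then $\gamma_0'\in L^\infty(\R)$, $\gamma_0'\equiv-L$ on each~$J_n$, and $|\gamma_0|\le L\ell_n+O(1)$ on~$I_n$, so that~\eqref{gamma0} holds and~\eqref{gamma'to0} fails.

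The heart of the proof is a localized comparison near the $n$-th notch. Let $q_n:=\big(a_n-\ell_n/2,\gamma_0(a_n-\ell_n/2)\big)$ be the midpoint of the affine part, and set $e:=(L,1)/\sqrt{1+L^2}$, which is the \emph{same} unit vector for every~$n$. For $n$ large the $x_1$-range of $B_{\ell_n/3}(q_n)$ lies inside~$J_n$, hence on that ball the initial datum $\1_U$, with $U=\{x_2\le\gamma_0(x_1)\}$, coincides with $\1_{H_n}$ for the half-plane $H_n:=\{x:(x-q_n)\cdot e\le0\}$. Writing $u_n$ for the solution of~\eqref{homo} with datum $\1_{H_n}$, the difference $w:=u-u_n$ solves $\partial_tw=\Delta w+b(t,x)w$ with $|b|\le\|f'\|_{L^\infty([0,1])}=:\Lambda$ and $|w(0,\cdot)|\le\1_{\{|x-q_n|\ge\ell_n/3\}}$; by the comparison principle $|w|$ is dominated by the solution of $v_t=\Delta v+\Lambda v$ with that datum, and the Gaussian decay of the heat kernel yields, for some $C=C(f)>0$,
$$\sup_{|x-q_n|\le\ell_n/6}\big|u(t,x)-u_n(t,x)\big|\ \le\ Ce^{\Lambda t}e^{-\ell_n^2/(Ct)}\for t>0.$$
On the other hand, by the rotational invariance of~\eqref{homo}, $u_n(t,q_n+y)=U^\sharp(t,y\cdot e)$, where $U^\sharp$ is the one-dimensional solution with step datum $\1_{(-\infty,0]}$ — \emph{the same function for all~$n$} — and since $f$ is bistable with positive integral, $U^\sharp(t,\cdot)\to\varphi^*(\cdot-c^*t+\sigma_\infty)$ in $L^\infty(\R)$ as $t\to+\infty$, for some constant $\sigma_\infty$ (see, e.g.,~\cite{FM}).

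Now take $t_n:=\sqrt{\ell_n}\to+\infty$, fix $\lambda\in(0,\lambda_0]$, set $\kappa_\lambda:=(\varphi^*)^{-1}(\lambda)$ and $y_n:=(c^*t_n-\sigma_\infty+\kappa_\lambda)e$; then $|y_n|\sim c^*\sqrt{\ell_n}\ll\ell_n/6$ and $Ce^{\Lambda t_n}e^{-\ell_n^2/(Ct_n)}\to0$. Translating and using interior parabolic estimates to upgrade the bound above to $C^{1;2}_{loc}$ convergence, the maps $(s,z)\mapsto u(t_n+s,q_n+y_n+z)$ converge, as $n\to+\infty$, to $(s,z)\mapsto\varphi^*(\kappa_\lambda+z\cdot e-c^*s)$ — here one uses crucially that $e$ is independent of~$n$ and that $\sigma$ is asymptotically constant. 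In particular $u(t_n,q_n+y_n)\to\lambda$, so, since $\partial_{x_2}u<0$, there is $\tau_n\to0$ with $u(t_n,z_n)=\lambda$ for $z_n:=q_n+y_n+\tau_n\mathrm{e}_2$, and by~\eqref{DchiDu},
$$\nabla_{\!x'}X_\lambda\big(t_n,(z_n)_1\big)=-\,\frac{\partial_{x_1}u(t_n,z_n)}{\partial_{x_2}u(t_n,z_n)}\ \longrightarrow\ -\,\frac{e_1}{e_2}=-L\neq0,$$
while $\big|\nabla_{\!x'}u(t_n,z_n)\big|=\big|\partial_{x_1}u(t_n,z_n)\big|\to|(\varphi^*)'(\kappa_\lambda)|\,L/\sqrt{1+L^2}>0$. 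Since $t_n\to+\infty$, these limits contradict~\eqref{Dchi0} and~\eqref{Du0} respectively, which proves the proposition.

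The step I expect to be the main obstacle is the bookkeeping of scales. A \emph{different} notch must be used for each time $t_n\to+\infty$, and the localized comparison is informative only while the radius~$\ell_n$ of the ball on which $U$ agrees with~$H_n$ dominates~$\sqrt{t_n}$; the choices $\ell_n=2^n$, $a_n=4^n$, $t_n=\sqrt{\ell_n}$ make this window nonempty and unbounded, but one must verify that the constant~$C$ in the heat-kernel estimate is uniform in~$n$ and that the $C^1$-smoothing of~$\gamma$ at the infinitely many junctions is harmless — it is, for the smoothing can be carried out on a fixed unit scale, much smaller than~$\ell_n$, it leaves~$\gamma$ exactly affine near~$q_n$, and elsewhere it only perturbs~$\1_U$ on a set far from~$q_n$, which is absorbed by the same tail bound. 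The remaining ingredients are soft: rotational invariance reduces the half-plane solutions to a single function up to the fixed rotation by~$e$, and the classical convergence of step-data solutions to the planar front pins down the limit.
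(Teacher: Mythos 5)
Your proof is correct, but it follows a genuinely different route from the paper's. The paper takes the single explicit function $\gamma(x')=\sqrt{|x'|}\sin(\sqrt{|x'|})$, whose derivative is bounded but oscillates without converging, and exploits \emph{spatial} translation compactness at a \emph{fixed} time $t>0$: translating by $4\pi^2n^2$ makes the initial data converge in $L^p_{loc}$ to the indicator of a tilted half-plane, so by parabolic estimates and uniqueness the translated solutions converge to a one-dimensional profile $w(t,2y-x')$, whence $\partial_{x'}X_\lambda\to1/2$ along the translates for every $t>0$; the failure of~\eqref{Du0} is then obtained from the propagating-terrace convergence of~\cite{DGM}, so the paper's counterexample works for any $f$ satisfying Hypothesis~\ref{hyp:minimalspeed}. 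You instead build widening affine ``notches'' of prescribed slope $-L$, compare $u$ quantitatively (Gaussian tails plus the $e^{\Lambda t}$ factor) with the exact tilted half-plane solution on balls of radius $\sim\ell_n$ around the $n$-th notch, and run a diagonal argument in time with $t_n=\sqrt{\ell_n}$, using the Fife--McLeod convergence~\cite{FM} of the one-dimensional step-datum solution to the bistable front; this exhibits the failure of~\eqref{Dchi0}--\eqref{Du0} along a sequence $(t_n)$, which is all the uniform-in-$x'$ statements require. What each approach buys: the paper's argument is softer (no heat-kernel estimates, no scale bookkeeping), shows failure at \emph{every} large time rather than along a sequence, and covers general reactions; yours is more constructive, lets you prescribe the limiting slope $-L$ arbitrarily, and makes the geometric mechanism --- a tilted planar front surviving locally near each notch --- completely explicit, at the cost of restricting to bistable $f$ (harmless for a ``fails in general'' statement) and of the scale-matching $\ell_n\gg t_n\gg1$, $\ell_n^2/t_n\to+\infty$, which you correctly verify.
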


To complete this section, we state a conjecture on the limiting profile of the solution of~\eqref{homo} and~\eqref{defu0bis} when $\gamma$ satisfies the non-coercivity condition~\eqref{gamma<0}. Let us preliminarily observe that the results of Section~\ref{sec23} allow one to derive the spreading speed in the vertical direction in such a case, also if the initial support does not fulfill condition~\eqref{hyp:U}. Indeed, on the one hand, we know that, under Hypothesis~\ref{hyp:minimalspeed}, the solution $u$ of~\eqref{homo} with~\eqref{defu0bis} and $\gamma\in L^\infty_{loc}(\R^{N-1})$ satisfies~\eqref{c<c*}, from~\cite[Proposition~3.1]{HR1}. On the other hand, under assumption~\eqref{gamma<0}, for any $\varepsilon>0$, there is $M_\varepsilon\in\R$ such that $U\subset U^\varepsilon:=\{(x',x_N)\in\R^N:x_N\le M_\varepsilon+\varepsilon|x'|\}$. Notice that each set~$U^\varepsilon$ does satisfy~\eqref{hyp:U} and therefore~\eqref{ass-cpt},~\eqref{FGgeneral} and~\eqref{Wshift} hold for the solution $u_\varepsilon$ of~\eqref{homo} with initial condition $\1_{U^\varepsilon}$, hence in particular $u_\varepsilon$ has a spreading speed equal to $w_\varepsilon(\mathrm{e}_N)=c^*\sqrt{1+\varepsilon^2}$ in the direction $\mathrm{e}_N$, in the sense of~\eqref{spreading0}. Since $u\le u_\varepsilon$ in $[0,+\infty)\times\R^N$ by the comparison principle, one infers from the arbitrariness of~$\varepsilon>0$ and from~\eqref{c<c*} that $u$ has a spreading speed equal to $w(\mathrm{e}_N)=c^*$ in the direction $\mathrm{e}_N$. This result about the spreading speed in the vertical direction, together with the flattening properties of the level sets discussed above, lead us to conjecture that, for initial data of the type~\eqref{defu0bis} and~\eqref{gamma<0}, the solutions locally converge along their level sets to the front profile $\varphi^*$ connecting $1$ to $0$ with minimal speed~$c^*$.

\begin{conjecture}\label{conj3}
Under the assumptions of Theorem~$\ref{thm:subgraph}$, there holds, for every $\lambda\in(0,1)$, for every sequence $(t_n)_{n\in\N}$ diverging to $+\infty$, and for every bounded sequence $(x'_n)_{n\in\N}$ in~$\R^{N-1}$,
\be\label{convphi}
u(t_n+t,x'_n+x',X_\lambda(t_n,x'_n)+x_N)\longrightarrow\vp^*(x_N-c^*t+(\vp^*)^{-1}(\lambda))\ \text{ as }n\to+\infty,
\ee
in $C^{1;2}_{loc}(\R_t\times\R_{x'}^{N-1})$ and uniformly with respect to $x_N\in\R$. If one further assumes~\eqref{gamma'to0}, then the above limit holds for every sequence $(x'_n)_{n\in\N}$ in $\R^{N-1}$, bounded or not.
\end{conjecture}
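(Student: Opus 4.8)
The plan is to combine three ingredients: a compactness argument extracting limiting profiles along subsequences; the flattening of the level sets (Conjecture~\ref{conj1}, or its currently known partial forms) to reduce these profiles to one space variable; and a one-dimensional rigidity statement identifying the resulting entire solutions with the minimal-speed front $\varphi^*$, which in turn requires a sufficiently precise control on the location of the level sets, in the spirit of the estimates of Section~\ref{seclag2}. Concretely, fix $\lambda\in(0,1)$, a sequence $t_n\to+\infty$ and a bounded sequence $(x'_n)_{n\in\N}$, and set $y_n:=(x'_n,X_\lambda(t_n,x'_n))$ and $u_n(t,x):=u(t_n+t,y_n+x)$. By standard parabolic estimates, along a subsequence $u_n\to v$ in $C^{1;2}_{loc}(\R_t\times\R_x^N)$, where $v$ is an entire solution of~\eqref{homo} with $0\le v\le1$ and $v(0,0)=\lambda$, hence $0<v<1$ by the strong maximum principle; moreover $v$ is $x_N$-nonincreasing (a limit of $x_N$-nonincreasing functions), and in fact $\partial_{x_N}v<0$ by the strong maximum principle applied to the linear equation solved by $\partial_{x_N}v$. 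Since the limit will be seen to be independent of the extracted subsequence, it suffices to identify every such $v$.

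The first key step is to prove that $\nabla_{\!x'}v\equiv0$, i.e.\ $v(t,x)=V(t,x_N)$. This is exactly the content of Conjecture~\ref{conj1} combined with the identity~\eqref{DchiDu}: if $\nabla_{\!x'}X_\lambda(t,x')\to0$ locally uniformly in $x'$, then $\nabla_{\!x'}v(0,0)=0$, and applying this at every base point of the limit gives $\nabla_{\!x'}v\equiv0$, so that $V$ solves $\partial_tV=\partial_{x_N}^2V+f(V)$ with $0<V<1$, $\partial_{x_N}V<0$ and $V(0,0)=\lambda$. In the regimes where Conjecture~\ref{conj1} is presently available — under~\eqref{HTconditions}, under the strong Fisher--KPP condition~\eqref{fkpp} through the one-dimensional symmetry results of~\cite{HR2}, or when $U$ is trapped between parallel half-spaces — this step can be carried out, possibly only after extracting a further subsequence. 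In full generality this reduction to one dimension is the main obstacle, since Conjecture~\ref{conj1} itself is open.

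It then remains to identify $V$. First, because $\limsup_{|x'|\to+\infty}\gamma(x')/|x'|\le0$, the discussion preceding the statement shows that $u$ has vertical spreading speed exactly $c^*$: the lower bound is~\eqref{c<c*}, which holds for the invading solution $u$, and the upper bound comes from the comparison $u\le u_\varepsilon$ with the solution $u_\varepsilon$ issued from $\1_{U^\varepsilon}$, $U^\varepsilon:=\{x_N\le M_\varepsilon+\varepsilon|x'|\}$, whose vertical speed is $c^*\sqrt{1+\varepsilon^2}$, followed by $\varepsilon\downarrow0$. What is needed beyond the speed is that the interface position $X_\lambda(t,x'_n)$ equals $c^*t$ up to a correction with vanishing increments, i.e.\ $X_\lambda(t_n+t,x'_n)-X_\lambda(t_n,x'_n)\to c^*t$ along the subsequence; this is precisely a level-set-location estimate of the type obtained in Section~\ref{seclag2}, and would guarantee that the $\lambda$-level of $V$ travels with speed $c^*$. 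Once this is known, one traps $V$ between two shifted minimal-speed fronts — from below via an interior bump $\theta\,\1_{B_\rho(x_0)}$, $x_0\in U_\rho$, whose level sets expand radially with speed $c^*$ by~\eqref{c<c*}--\eqref{c>c*} and which is asymptotically planar in the direction $\mathrm{e}_N$ over bounded $x'$, and from above via the (near-planar) fronts associated with $U^\varepsilon$ — and squeezes them together by a sliding argument in the spirit of Fife--McLeod, using that, in every situation covered by Hypothesis~\ref{hyp:minimalspeed}, compactly supported data invade as a single minimal-speed front (see~\cite{AW} and~\eqref{c<c*}--\eqref{c>c*}); in genuinely multistable cases one additionally invokes the propagating-terrace theory of~\cite{DGM,P2,P3} to exclude that the transition layer splits into a terrace. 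This yields $v(t,x)=\varphi^*(x_N-c^*t+a)$, the shift being fixed by $v(0,0)=\lambda$, i.e.\ $a=(\varphi^*)^{-1}(\lambda)$; the uniformity with respect to $x_N$ in~\eqref{convphi} follows from the uniform boundedness in $t$ of the width of the transition layer, expected here (as in the one-dimensional analysis of~\cite{N}) because the initial interface is sharp.

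For the last assertion, the additional hypothesis~\eqref{gamma'to0} should upgrade the flattening of the first step to hold uniformly in $x'$ — this is Conjecture~\ref{conj:order1} — and likewise make the level-set-location control uniform in $x'$, combining the global flattening with the estimates of Section~\ref{seclag2}; the two subsequent steps then apply verbatim to arbitrary, possibly unbounded, sequences $(x'_n)$. Conditionally on the reduction to one dimension and on the corresponding one-dimensional convergence-to-front statement, the scheme above closes; the former is the hard part.
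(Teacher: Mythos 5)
The statement you are addressing is Conjecture~\ref{conj3}: it is posed in the paper as an open problem, and the paper contains no proof of it, so there is no ``paper's own proof'' to compare your argument against. The closest the paper comes is Corollary~\ref{cor:DGlag}, which, under the much stronger hypotheses that $f$ is of the strong Fisher--KPP type~\eqref{fkpp} and that $\gamma$ satisfies the logarithmic bound~\eqref{gamma<}, shows that the subsequential limits $\t u$ are independent of $x'$ and satisfy $\t u(t,x_N+c^*t)\to1$ (resp.\ $0$) as $x_N\to-\infty$ (resp.\ $+\infty$) uniformly in $t$; even there, the identification of $\t u$ with the minimal front profile is explicitly left open as being exactly the content of~\eqref{convphi}.

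Your scheme is a reasonable map of the difficulties, and you are candid that the reduction to one space dimension rests on the open Conjecture~\ref{conj1}. But the later steps also contain genuine gaps that you pass over too quickly. First, the refined level-set location $X_\lambda(t_n+t,x'_n)-X_\lambda(t_n,x'_n)\to c^*t$ is not a consequence of the spreading speed: under the hypotheses of Theorem~\ref{thm:subgraph} one only knows $X_\lambda(t,x')=c^*t+o(t)$ (Lemma~\ref{lem:X/t}), and an $o(t)$ correction can have order-one increments over bounded time windows; the sharper control comes only from the Section~\ref{seclag2} estimates, which require~\eqref{fkpp} and~\eqref{gamma<}. Second, and more fundamentally, squeezing between shifted fronts ``in the spirit of Fife--McLeod'' identifies the limit only when the front is unique and exponentially stable (bistable or ignition cases); under Hypothesis~\ref{hyp:minimalspeed} alone the monostable case admits fronts of every speed in $[c^*,+\infty)$, the minimal front is only algebraically stable, and classifying the monotone entire solutions whose $\lambda$-level set has average vertical speed $c^*$ is precisely the missing rigidity --- it is where Corollary~\ref{cor:DGlag}~(ii) stops. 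Third, the uniformity in $x_N$ that you derive from a uniformly bounded interface width is itself flagged in the discussion after Theorem~\ref{thm:subgraph} as unproved in dimensions $N\ge2$. In short, what you have is a conditional reduction of the conjecture to three unproved assertions, not a proof; as such it is consistent with, but does not go beyond, the partial results the paper actually establishes.
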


As before, by Proposition~\ref{pro:asymp}, the second conclusion does not hold in general if assumption~\eqref{gamma'to0} is replaced by $\lim_{|x'|\to+\infty}\gamma(x')/|x'|=0$. On the other hand, Conjecture~\ref{conj3}, and especially its second part, holds if $\gamma$ is bounded, for some classes of functions $f$, see~\cite{BH1,MN,MNT,P2,RR1,RR2}.

\begin{remark}\label{rem37} 
The conditions used in the main new results of this section, namely Theorems~$\ref{thm:subgraph}$ and~$\ref{thm:conical}$ are in general not related to the conditions~\eqref{hyp:U} and~\eqref{dUrho} mentioned in Section~$\ref{sec2}$ and used in~{\rm{\cite{HR1}}} to estimate the spreading speeds and upper level sets of solutions with general initial supports which may not be subgraphs. Conditions~\eqref{hyp:U} and~\eqref{dUrho} are needed to get a global picture of the spreading properties in all directions, whereas conditions~\eqref{defu0bis} and~\eqref{gamma<0}, or those of Theorem~$\ref{thm:conical}$, are used to derive local flattening properties in the direction $\mathrm{e}_N$. More precisely, in Theorem~$\ref{thm:subgraph}$, the conditions~\eqref{defu0bis} and~\eqref{gamma<0} do not imply~\eqref{hyp:U} and~\eqref{dUrho} in general, and conversely. For instance, on the one hand, if $\alpha<0$, if $(\varepsilon_k)_{k\in\N}$ is any sequence of positive real numbers converging to $0$, and~if
$$\gamma(x'):=\alpha\,|x'|+|\alpha|\sum_{k=1}^\infty k\times\1_{(k,k+\varepsilon_k)}(|x'|),$$
then $\gamma\in L^\infty_{loc}(\R^{N-1})$ and satisfies~\eqref{gamma<0}, while $U$ defined by~\eqref{defu0bis} does not satisfy~\eqref{hyp:U} or~\eqref{dUrho}, for any $\rho>0$, since $\mc{B}(U)=\{e=(e',e_N)\in\Sph:e_N>0\}$, $\mc{U}(U_\rho)=\{e=(e',e_N)\in\Sph:e_N\le\alpha|e'|\}$, and $\varepsilon_k\to0$ as $k\to+\infty$. On the other hand, if $\gamma(x')=\alpha\,|x'|$ with $\alpha>0$, then~\eqref{hyp:U} and~\eqref{dUrho} hold for any $\rho>0$, while~\eqref{gamma<0} is not satisfied, nor is actually any of the conditions {\rm{(i)-(iv)}} of Theorem~$\ref{thm:conical}$. Nevertheless, any of the conditions~{\rm{(i)}},~{\rm{(ii)}} or~{\rm{(iv)}} of Theorem~$\ref{thm:conical}$ implies~\eqref{hyp:U} and~\eqref{dUrho}, for any $\rho>0$. Condition~{\rm{(iii)}} of Theorem~$\ref{thm:conical}$ implies~\eqref{dUrho}, but not~\eqref{hyp:U} in general: for instance, if $\alpha<0$ and if
$$\gamma(x'):=\alpha\int_0^{|x'|}\sum_{k=1}^\infty k!\times\1_{(k!,k!+1)}(r)\,dr,$$
then $\gamma\in L^\infty_{loc}(\R^{N-1})$ and satisfies condition~{\rm{(iii)}} of Theorem~$\ref{thm:conical}$, but the set $U$ defined in~\eqref{defu0bis} is such that $\mc{B}(U)=\{e=(e',e_N)\in\Sph:e_N>0\}$ and $\mc{U}(U_\rho)=\{e=(e',e_N)\in\Sph:e_N\le\alpha|e'|\}$ for any $\rho>0$, whence~\eqref{hyp:U} does not hold.
\end{remark}


\section{Logarithmic lag: old and new results}\label{sec4}	

In this section, we discuss the large-time gaps, as defined in~\eqref{gaps}, between the position of the level sets of the solutions and the points moving with a constant speed equal to the spreading speed, in a given direction $e$. We first review in Section~\ref{seclag1} some standard results for compactly supported or almost-planar initial conditions. We then state in Section~\ref{seclag2} some new results on the logarithmic lag in the Fisher-KPP case.


\subsection{Known results for bounded or almost-planar initial supports}\label{seclag1}

Let us start with invading solutions $u$ of~\eqref{homo} with compactly supported initial conditions (we recall that the invading solutions are those which converge to $1$ as $t\to+\infty$ locally uniformly in $\R^N$). The estimates of the position at large time of the level sets of $u$ strongly depend on the reaction function $f$ and on the dimension $N$. On the one hand, when $f$ is of the bistable type~\eqref{bistable} with $\int_0^1f(s)ds>0$ (in which case Hypotheses~\ref{hyp:invasion} and~\ref{hyp:minimalspeed} are fulfilled), then, in dimension $N=1$, the solution $u$ develops into a pair of diverging fronts, that is, there are $x_\pm\in\R$ such that $u(t,x)-\varphi(\pm x-c_0t+x_{\pm})\to0$ as $t\to+\infty$ uniformly in $x\in I_\pm$, with $I_-:=(-\infty,0]$ and $I_+:=[0,+\infty)$, where $\varphi$ is the profile of a (unique up to shifts) traveling front connecting $1$ to $0$, with speed $c_0>0$, see~\cite{FM}. In particular, for every $\lambda\in(0,1)$,
$$\limsup_{t\to+\infty}\ d_{\mc{H}}\big(E_\lambda(t),\{-c_0t,c_0t\}\big)<+\infty.$$
This last property also holds when $f$ is of the ignition type~\eqref{ignition} by~\cite{U2}. In dimensions $N\ge2$, then a logarithmic lag occurs, due to curvature effects, namely it is shown in~\cite{U2} that, for both ignition~\eqref{ignition} or bistable~\eqref{bistable} reactions $f$ with $\int_0^1f(s)ds>0$,
$$\limsup_{t\to+\infty}\ d_{\mc{H}}\big(E_\lambda(t),\partial B_{c_0t-((N-1)/c_0)\log t}\big)<+\infty.$$

On the other hand, when $f$ is of the Fisher-KPP type~\eqref{kpp0}, then, in dimension $N=1$, there are $x_\pm\in\R$ such that $u(t,x)-\varphi^*(\pm x-c^*t+(3/c^*)\log t+x_\pm)\to0$ as $t\to+\infty$ uniformly in $x\in I_\pm$, where $\varphi^*$ is a profile of a traveling front connecting $1$ to $0$, with minimal speed $c^*=2\sqrt{f'(0)}$, see~\cite{B,HNRR,L,NRR,U1}. Therefore, for every $\lambda\in(0,1)$,
$$\limsup_{t\to+\infty}\ d_{\mc{H}}\Big(E_\lambda(t),\Big\{-c^*t+\frac{3}{c^*}\log t,c^*t-\frac{3}{c^*}\log t\Big\}\Big)<+\infty.$$ 
Notice the presence of a logarithmic lag, even in dimension $N=1$, which is due to the sublinearity of $f$ and its saturation at the value $1$. In dimensions $N\ge2$, the same curvature effects as for the bistable case are responsible of an additional logarithmic lag~$((N-1)/c^*)\log t$, that is,
$$\limsup_{t\to+\infty}\ d_{\mc{H}}\big(E_\lambda(t),\partial B_{c^*t-((N+2)/c^*)\log t}\big)<+\infty,$$
see~\cite{G}. More precisely, it is known that
$$\sup_{x\in\R^N\setminus\{0\}}\Big|u(t,x)-\varphi^*\Big(|x|-c^*t+\frac{N+2}{c^*}\,\log t+a(\hat{x})\Big)\Big|\to0\ \hbox{ as }t\to+\infty,$$
for some Lipschitz continuous function $a$ defined in~$\Sph$, see~\cite{D,RRR}. 

Consider now the case of almost-planar initial conditions of the type~\eqref{defu0}. Namely, when $u_0=\1_U$ and $U$ is trapped between two parallel half-spaces, say
$$\big\{x\in\R^N:x_N\le 0\big\}\subset U\subset\big\{x\in\R^N:x_N\le A\big\}$$
for some $A>0$, then it follows from the one-dimensional case and the comparison principle that $\limsup_{t\to+\infty}d_{\mc{H}}(E_\lambda(t),\{x_N=c_0t\})<+\infty$ if $f$ is of the ignition or bistable types~\eqref{ignition} or~\eqref{bistable}, and one even has
$$\sup_{x\in\R^N}\big|u(t,x)-\varphi(x_N-c_0t+a(t,x'))\big|\to0\hbox{ as $t\to+\infty$},$$
for some bounded function $a:(0,+\infty)\times\R^{N-1}\to\R$, see~\cite{MN,P2,RR1,U2}. In the Fisher-KPP case~\eqref{kpp0}, then $\limsup_{t\to+\infty}d_{\mc{H}}(E_\lambda(t),\{x_N=c^*t-(3/c^*)\log t\})<+\infty$ in any dimension $N\ge2$, and
$$\sup_{x\in\R^2}\Big|u(t,x)-\varphi^*\Big(x_2-c^*t+\frac{3}{c^*}\log t+a(t,x_1)\Big)\Big|\to0\hbox{ as $t\to+\infty$},$$
in dimension $N=2$ with $f(s)=s(1-s)$, for some bounded function $a:(0,+\infty)\times\R\to\R$, see~\cite{RR2}.


\subsection{New results}\label{seclag2}

We start with general logarithmic-in-time lower and upper bounds of the distance between the upper level sets $F_\lambda(t)$ of the solutions of~\eqref{homo}-\eqref{defu0} and the $c^*t$-neighborhoods of $U$ for general sets $U$, in the Fisher-KPP case~\eqref{kpp0}. We recall that Hypotheses~\ref{hyp:invasion} and~\ref{hyp:minimalspeed} hold in this case, as well as the hair trigger effect, and that $c^*=2\sqrt{f'(0)}$.

\begin{theorem}\label{thlag}
Assume that $f$ is of the Fisher-KPP type~\eqref{kpp0}, and let~$u$ be the solution of~\eqref{homo}-\eqref{defu0} for some $U\subset\R^N$ with $N\ge2$. Then, for any $\lambda\in(0,1)$, there exists $R>0$ such that
\begin{equation}\label{E<U}
F_\lambda(t) \subset U+B_{c^*t+\frac{N-2}{c^*}\log t+R}\quad\text{for all }t\geq1.
\end{equation}
If in addition there exists $\rho>0$ such that $U_\rho\neq\emptyset$~and~\eqref{dUrho} is fulfilled, then it also holds that
\begin{equation}\label{E>U}
F_\lambda(t)+B_R\supset U+B_{c^*t-\frac{N+2}{c^*}\log t}\quad\text{for all $t\ge1$},
\end{equation}
hence, in particular, 
\begin{equation}\label{dH2}
d_{\mc{H}}\big(F_\lambda(t) \,,\, U+B_{c^*t}\big)=O(\log t)\ \text{ as }t\to+\infty.
\end{equation}
\end{theorem}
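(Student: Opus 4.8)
The plan is to derive both inclusions from the known one-dimensional Fisher--KPP front dynamics via the comparison principle, exploiting the fact that the relevant radially-symmetric building blocks are available from the literature recalled in Section~\ref{seclag1}.

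\textbf{Upper bound \eqref{E<U}.} First I would produce a supersolution. Fix $\lambda\in(0,1)$. Since $u_0=\1_U\le 1$, the comparison principle gives $u(t,x)\le v(t,\dist(x,U))$ whenever $v$ is a supersolution of the radial problem with initial datum $\ge\1_{\{s\le0\}}$ in the variable $s=\dist(x,U)$; more precisely, since $\dist(\cdot,U)$ is $1$-Lipschitz and the reaction is spatially homogeneous, one checks that if $\overline v(t,s)$ solves a one-dimensional KPP equation on $\R$ with $\overline v(0,\cdot)\ge\1_{(-\infty,0]}$, then $(t,x)\mapsto\overline v(t,\dist(x,U))$ is a supersolution of \eqref{homo} provided $\overline v$ is nonincreasing in $s$ (so that the $|\nabla\dist|^2\le 1$ defect has the right sign against $\partial_s\overline v<0$), together with a curvature/Laplacian term: actually it is cleaner to dominate $u$ by the solution $\overline u$ of \eqref{homo} in $\R^N$ with initial datum $\1_{\overline U}$ for $\overline U$ a half-space or, better, to use a radial comparison where the Laplacian of $\dist(x,U)$ contributes a favorable $-(N-1)/s$-type term. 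The key input is then the known logarithmic lag for the ignition/KPP fronts: a compactly supported (indeed half-space) KPP datum yields a level set at $c^*t-\frac{N+2}{c^*}\log t+O(1)$, but for the \emph{upper} containment one wants the slower-decaying estimate $c^*t+\frac{N-2}{c^*}\log t+O(1)$, which is exactly the lag of a planar (one-dimensional) front corrected by the worst-case curvature of a distance function, i.e. the $3/c^*$ planar log-lag plus an $(N-5)/c^*$ (sign-favorable) curvature contribution; I would borrow the precise radial supersolution from \cite{G,HNRR,NRR} on $\R$ and $\R^N$. The upshot: $u(t,x)\le\lambda$ as soon as $\dist(x,U)\ge c^*t+\frac{N-2}{c^*}\log t+R$ for $R$ large, which is \eqref{E<U}.

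\textbf{Lower bound \eqref{E>U}.} Here I would use $U_\rho\neq\emptyset$ and \eqref{dUrho}. Pick $x_0\in U_\rho$, so $B_\rho(x_0)\subset U$ and the hair trigger effect (Hypothesis~\ref{hyp:invasion} with any $\theta,\rho$, since $f$ is KPP) applies. Around any point $y\in U_\rho$ we have $u_0\ge\1_{B_\rho(y)}$, hence by comparison with the radial invading solution and the known sharp radial estimate (see \cite{D,G,RRR}: the level sets of the solution emanating from $\1_{B_\rho}$ sit at radius $c^*t-\frac{N+2}{c^*}\log t+O(1)$), we get $u(t,x)>\lambda$ for all $x\in B_{c^*t-\frac{N+2}{c^*}\log t-C}(y)$, uniformly in $y\in U_\rho$, for a constant $C$ depending only on $f,\rho,\lambda$. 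Taking the union over $y\in U_\rho$ gives $F_\lambda(t)\supset U_\rho+B_{c^*t-\frac{N+2}{c^*}\log t-C}$. Finally, \eqref{dUrho} says $d_{\mc H}(U,U_\rho)<+\infty$, so $U\subset U_\rho+B_{D}$ for some $D$; hence $U+B_{c^*t-\frac{N+2}{c^*}\log t}\subset U_\rho+B_{c^*t-\frac{N+2}{c^*}\log t+D}\subset F_\lambda(t)+B_{C+D}$, which is \eqref{E>U} with $R:=C+D$.

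\textbf{Conclusion \eqref{dH2}.} From \eqref{E<U}, every point of $F_\lambda(t)$ lies within $\frac{N-2}{c^*}\log t+R$ of $U+B_{c^*t}$; from \eqref{E>U}, every point of $U+B_{c^*t-\frac{N+2}{c^*}\log t}$ lies within $R$ of $F_\lambda(t)$, and a point of $U+B_{c^*t}$ is within $\frac{N+2}{c^*}\log t$ of $U+B_{c^*t-\frac{N+2}{c^*}\log t}$, hence within $\frac{N+2}{c^*}\log t+R$ of $F_\lambda(t)$. Both one-sided Hausdorff gaps are thus $O(\log t)$, giving \eqref{dH2}. The main obstacle I anticipate is the upper bound \eqref{E<U}: one must build a genuine radial supersolution valid for \emph{all} $t\ge1$ with the precise coefficient $\frac{N-2}{c^*}$ (not merely $O(\log t)$), controlling the distance-function Laplacian term $\Delta\dist(x,U)$, which is only a distribution and has the right sign only where $\dist(\cdot,U)$ is smooth — so one either works with the approximate sign condition $\Delta\dist(\cdot,U)\le (N-1)/\dist(\cdot,U)$ in the viscosity sense, or regularizes $U$ and passes to the limit; I would follow the construction in \cite{G} adapted to a general closed set in place of a ball.
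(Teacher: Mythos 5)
Your treatment of the lower bound \eqref{E>U} coincides with the paper's: translate the radial solution emanating from $\1_{B_\rho}$ to each point of $U_\rho$, invoke the sharp radial estimate of \cite{D,G,RRR}, take the union, and absorb $d_{\mc H}(U,U_\rho)<+\infty$ into $R$. The deduction of \eqref{dH2} from the two inclusions is also correct and is exactly what the paper does.

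The upper bound \eqref{E<U} is where your proposal has a genuine gap, and you have correctly identified it as the critical step without closing it. Two problems. First, the comparison $u(t,x)\le\overline v(t,\dist(x,U))$ you start from is not valid: on $\overline U$ the composed function is frozen at the value $\overline v(t,0)<1$, so it is neither a supersolution there (the inequality reduces to $\partial_{ss}\overline v(t,0)\ge 0$, which has no reason to hold) nor capable of dominating $u$, which tends to $1$ inside $U$; restricting the comparison to $\R^N\setminus\overline U$ requires controlling $u$ on $\partial U$, which is circular. Any fix must allow the barrier to exceed $1$ near $U$. Second, even granting a usable barrier, you never produce the precise coefficient $\frac{N-2}{c^*}$ — your heuristic (``planar lag plus curvature'') misreads the sign of the planar correction (the $\frac{3}{c^*}\log t$ is a \emph{lag}, whereas \eqref{E<U} allows an \emph{advance} of $\frac{N-2}{c^*}\log t$), and deferring to ``the construction in \cite{G} adapted to a general closed set'' is precisely the part that needs an argument. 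The paper's route avoids both issues and is elementary: since $f(s)\le f'(0)s$ on $\R$ (after the extension of $f$ by $0$), the solution $w$ of the linearized equation $\partial_t w=\Delta w+f'(0)w$ with initial datum $\1_{\R^N\setminus B_r}$ is a supersolution of \eqref{homo} even where it exceeds $1$; for $x_t$ with $\dist(x_t,U)\ge r$ one has $u_0\le \1_{\R^N\setminus B_r}(\cdot-x_t)$, hence $u(t,x_t)\le w(t,0)$, and the heat-kernel computation
$$w(t,0)=\frac{e^{f'(0)t}}{(4\pi t)^{N/2}}\,N|B_1|\int_r^{+\infty}\rho^{N-1}e^{-\rho^2/(4t)}\,d\rho\le \frac{4N|B_1|}{(4\pi)^{N/2}}\,t^{(2-N)/2}e^{f'(0)t-r^2/(4t)}\,r^{N-2}$$
(valid for $r\ge c^*t+R_0$ after one integration by parts) shows that the factor $r^{N-2}\sim(c^*t)^{N-2}$ is exactly compensated by choosing $r=c^*t+\frac{N-2}{c^*}\log t+R$ with $R$ large. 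You would need to supply an argument of this kind to make \eqref{E<U} complete.
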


The proof of Theorem~\ref{thlag} is given in Section~\ref{sec61}. We point out that the estimate~\eqref{E<U} is not sharp in some specific cases, for instance when $U$ is bounded or when it is a half space. However, together with \eqref{E>U}, it suffices to derive the logarithmic-in-time bound~\eqref{dH2}. It is obtained by estimating the position of the level sets of some solutions of the linearized equation, using the heat kernel. One may wonder whether a direct analysis on the nonlinear equation without passing to the linearized problem --~in the spirit for instance of~\cite{RRR}~-- would provide a sharper estimate for a general initial support $U$. We leave this question to future investigations.

Next, we turn to the question of the position of the level sets of solutions in a specific case. Namely, we consider a solution to~\eqref{homo} with an initial condition given by~\eqref{defu0bis} with~$\gamma$ bounded from above, and investigate the lag between the position of the level sets of~$u$ behind $c^*t$ in the direction~$x_N$. By comparison, we know from the results of Section~\ref{seclag1} that, up to an additive constant, the lag is between $(3/c^*)\log t$, which is the lag in the $1$-dimensional case, and $((N+2)/c^*)\log t$, which is the lag in the case of compactly supported initial conditions.
Under the notation~\eqref{defX}, this means that the lag $c^*t-X_\lambda(t,x')$ satisfies, for every~$\lambda\in(0,1)$ and $x'\in\R^{N-1}$,
\be\label{lagbetween}
\frac{3}{c^*}\,\log t+O(1)\leq c^*t-X_\lambda(t,x')\leq \frac{N+2}{c^*}\,\log t+O(1)\ \ \hbox{ as }t\to+\infty.
\ee
A natural question is whether or not this lag is equal to one of these bounds or whether it may take some intermediate values. Our next result states that, for an initial condition~$u_0$ satisfying~\eqref{defu0bis} with a function $x'\mapsto\gamma(x')$ tending to $-\infty$ as $|x'|\to+\infty$ faster than a suitable multiple of the logarithm, the lag coincides with the above upper bound, that is, the position of the level sets of $u$ in the direction $x_N$ is the same as when the initial condition is compactly supported. 

\begin{theorem}\label{thm:normdelay}
Assume that $f$ is of the strong Fisher-KPP type~\eqref{fkpp}, and let $u$ be the solution of~\eqref{homo} with an initial condition $u_0$ satisfying~\eqref{defu0bis}. If
\Fi{gamma<}
\limsup_{|x'|\to+\infty}\,\frac{\gamma(x')}{\log(|x'|)}<-\frac{2(N-1)}{c^*},
\Ff
then
\be\label{lag1}
X_\lambda(t,x')=c^*t-\frac{N+2}{c^*}\,\log t+O(1)\ \hbox{ as }t\to+\infty,
\ee
locally uniformly with respect to $\lambda\in(0,1)$ and $x'\in\R^{N-1}$, and the inequality ``$\leq$'' holds true in the above formula locally uniformly in $\lambda\in(0,1)$ and uniformly in $x'\in\R^{N-1}$.
\end{theorem}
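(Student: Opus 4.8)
The plan is to derive the upper and lower bounds for $X_\lambda(t,x')$ separately, each by a comparison argument that reduces to the known behavior for compactly supported data in the Fisher-KPP case recalled in Section~\ref{seclag1}.

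For the upper bound $X_\lambda(t,x')\le c^*t-\frac{N+2}{c^*}\log t+O(1)$, the idea is that the initial support $U=\{x_N\le\gamma(x')\}$, although unbounded, is \emph{thin} in the vertical direction away from a bounded region because $\gamma(x')\to-\infty$. More precisely, I would fix a large ball $B_{R_0}$ such that $\gamma(x')\le 0$ for $|x'|\ge R_0$ and, outside this region, the condition $\limsup_{|x'|\to\infty}\gamma(x')/\log|x'|<-2(N-1)/c^*$ gives $\gamma(x')\le -\frac{2(N-1)}{c^*}\log|x'|-C_0$ for $|x'|$ large. The key point is to build a supersolution of~\eqref{homo} by placing above $u$ a sum (or maximum, after linearization near $0$) of radially symmetric ``bumps'': one compactly supported bump centered near the origin, plus a family of shifted compactly supported bumps accounting for the far-away part of $U$. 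Each such bump, by~\cite{G,RRR} (and the $N$-dimensional radial estimate recalled after~\eqref{lagbetween}), has its $\lambda$-level set at radius $\le c^*t-\frac{N+2}{c^*}\log t+O(1)$ from its own center. Because $\gamma$ decays faster than $\frac{2(N-1)}{c^*}\log|x'|$, the contribution at a point $(x',x_N)$ with $x_N$ near $c^*t$ coming from a bump located at horizontal distance $r=|x'|$ from it decays at least like $r^{-2(N-1)/c^*\cdot c^*/1}\cdot(\text{polynomial})$; summing over the (at most $O(r^{N-1})$ many) relevant bumps at distance $\sim r$ still gives a summable series, so the total is dominated by the single near-origin bump and the upper bound for a compactly supported datum is inherited, uniformly in $x'$. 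This is exactly the mechanism by which~\eqref{E<U} from Theorem~\ref{thlag} already gives the weaker bound $c^*t+\frac{N-2}{c^*}\log t$; here the faster decay of $\gamma$ upgrades the exponent to $\frac{N+2}{c^*}$.

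For the lower bound $X_\lambda(t,x')\ge c^*t-\frac{N+2}{c^*}\log t-C$, it suffices to note that $U$ contains a ball of radius $\rho$ (for any $\rho>0$, since $\gamma\in L^\infty_{loc}$ and the hair-trigger effect holds in case~\eqref{fkpp}, any $\theta,\rho$ are admissible), hence by the comparison principle $u\ge v$ where $v$ is the solution with a compactly supported subsolution datum $\theta\1_{B_\rho(x_0)}$ below $u_0$, with $x_0=(x_0',\gamma(x_0'))$ for any fixed $x_0'$. The invading solution $v$ has, by~\cite{D,G,RRR}, its level sets at radius $c^*t-\frac{N+2}{c^*}\log t+O(1)$ from $x_0$, so in particular $v(t,x',x_N)\to 1$ whenever $x_N\le c^*t-\frac{N+2}{c^*}\log t-C$ and $|x'-x_0'|$ bounded, for $C$ large. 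This forces $X_\lambda(t,x')\ge c^*t-\frac{N+2}{c^*}\log t-C$ for such $x'$, locally uniformly. Combining with the upper bound yields~\eqref{lag1} locally uniformly in $\lambda$ and $x'$, with the one-sided ``$\le$'' being uniform in $x'$ because the supersolution construction above is.

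The main obstacle I expect is the supersolution construction for the upper bound: one must control the \emph{cumulative} effect of infinitely many bumps coming from the unbounded part of $U$, and show their sum near the level $x_N\approx c^*t-\frac{N+2}{c^*}\log t$ is $o(1)$ (or at least bounded in a way that only shifts the level set by $O(1)$). This requires sharp pointwise bounds on a single radial KPP solution at distances \emph{beyond} its level set — i.e., Gaussian-type decay of $u$ in the leading edge — together with the precise threshold $-2(N-1)/c^*$ in~\eqref{gamma<} which is exactly what makes the geometric series over bump locations at horizontal distance $r\sim 2^k$ converge. Handling the linearization carefully (the bumps must be subsolutions of the linear equation $\partial_t w=\Delta w+f'(0)w$ to be legitimately superposed, then the KPP inequality $f(s)\le f'(0)s$ transfers this to a supersolution of the nonlinear problem) is routine but must be done with care so that the saturation at $1$ does not destroy the estimate; this is where the strong Fisher-KPP hypothesis~\eqref{fkpp}, rather than the mere~\eqref{kpp0}, is used.
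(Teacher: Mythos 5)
Your architecture coincides with the paper's: the lower bound is obtained exactly as you describe, by placing a compactly supported radial datum below $u_0$ and invoking the sharp lag results of \cite{D,G,RRR}; the upper bound is obtained (after reducing, by comparison and translation, to $\gamma(x')=\beta\log(1+|x'|)$ with $\beta<-2(N-1)/c^*$) by covering $U$ with countably many balls $B_\delta(k,\gamma(k)-h)$, $(k,h)\in\Z^{N-1}\times\N$, superposing the corresponding translated radial solutions, and checking that the resulting series evaluated near the level $c^*t-\frac{N+2}{c^*}\log t$ stays small precisely because $\sum_{k}e^{c^*\gamma(k)/2}=\sum_k(1+|k|)^{c^*\beta/2}$ converges when $\beta<-2(N-1)/c^*$. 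You also correctly single out the two technical ingredients: a pointwise bound of the form $w(t,x)\le C\varphi^*(|x|-\rho(t))$ in the leading edge (this is \cite[Lemma~3.5]{D}), and the fact that \eqref{fkpp}, not merely \eqref{kpp0}, is what legitimizes the superposition.

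There is, however, one genuine gap in the mechanism you propose for the superposition. You write that the bumps ``must be subsolutions of the linear equation $\partial_tw=\Delta w+f'(0)w$ to be legitimately superposed, then the KPP inequality $f(s)\le f'(0)s$ transfers this to a supersolution of the nonlinear problem.'' This cannot work as stated: a sum of subsolutions of the linear equation is only a subsolution of the linear equation, which yields no comparison from above for $u$; and if instead you take the bumps to be genuine solutions of the linear equation (so that their sum is again a linear solution, hence a nonlinear supersolution by $f(s)\le f'(0)s$), then each bump's level set lies at $c^*t-\frac{N}{c^*}\log t+O(1)$ rather than $c^*t-\frac{N+2}{c^*}\log t+O(1)$ --- linearization loses the Bramson correction $\frac{2}{c^*}\log t$, which is exactly why Theorem~\ref{thlag} only produces the coefficient $(N-2)/c^*$ in~\eqref{E<U}. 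The correct argument, and the one used in the paper, sums the \emph{nonlinear} radial solutions $w_{k,h}$ themselves and observes that, since $s\mapsto f(s)/s$ is nonincreasing by~\eqref{fkpp}, one has $f\big(\sum_ja_j\big)\le\sum_jf(a_j)$ whenever $0\le\sum_ja_j\le1$, so that $\min(v,1)$ with $v=\sum_{k,h}w_{k,h}$ is a generalized supersolution of~\eqref{homo} while each summand retains its sharp $(N+2)/c^*$ delay. A secondary point: the uniformity in $x'$ of the upper bound should not be asserted from ``the construction being uniform'' but follows, once $\gamma$ is replaced by the radial function $\beta\log(1+|x'|)$, from the fact that $u(t,\cdot,x_N)$ is then radially symmetric and nonincreasing in $|x'|$ (by reflection), so the bound at $x'=0$ propagates to every $x'$.
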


If the upper bound for $\gamma$ in~\eqref{gamma<} is relaxed, we expect the lag of the solution with respect to the critical front to differ from the one associated with compactly supported initial data, that we recall is $((N+2)/c^*)\log t$. We derive the following lower bound for the lag.

\begin{proposition}\label{pro:lag}
Assume that $f$ is of the strong Fisher-KPP type~\eqref{fkpp} and let $u$ be the solution of~\eqref{homo} with an initial condition $u_0$ satisfying~\eqref{defu0bis}. If there is $\sigma\ge-(N-1)$ such that
\be\label{hyplag}
\limsup_{|x'|\to+\infty}\frac{\gamma(x')}{\log|x'|}\le\frac{2\sigma}{c^*},
\ee
then, for any $\lambda\in(0,1)$,
\be\label{ineqlag}
X_\lambda(t,x')\leq c^*t-\frac{3-\sigma}{c^*}\,\log t+o(\log t)\ \hbox{ as }t\to+\infty,
\ee
locally uniformly with respect to $x'\in\R^{N-1}$.
\end{proposition}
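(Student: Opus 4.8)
The plan is to derive the upper bound for $X_\lambda$ by constructing a supersolution adapted to the logarithmic growth of $\gamma$ at $-\infty$, and by carefully keeping track of the curvature-induced lag. The starting point is the observation that, by the maximum principle, $u$ is monotone non-increasing in $x_N$, so that $X_\lambda(t,x')$ is well defined by~\eqref{defX}, and the claim~\eqref{ineqlag} is equivalent to an upper bound for $u$ of the form: for every $\varepsilon>0$, $u(t,x',x_N)\le\lambda$ whenever $x_N\ge c^*t-\frac{3-\sigma-\varepsilon}{c^*}\log t$ and $t$ is large, uniformly for $x'$ in a fixed ball. Since~\eqref{fkpp} implies the KPP inequality $f(s)\le f'(0)s$, the function $u$ is a subsolution of the linear heat equation $\partial_t v=\Delta v+f'(0)v$ started from $u_0=\1_U$ with $U$ the subgraph of $\gamma$. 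So it suffices to estimate the solution $\overline{u}$ of this linear problem.

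The second step is to compute, or rather bound from above, $\overline{u}$ using the explicit heat kernel. Writing $\overline{u}(t,x)=e^{f'(0)t}(G_t * \1_U)(x)$ with $G_t$ the $N$-dimensional Gaussian, one estimates the convolution by exploiting the geometry of $U$: a point $y=(y',y_N)\in U$ means $y_N\le\gamma(y')$, and the hypothesis~\eqref{hyplag} gives $\gamma(y')\le\frac{2\sigma}{c^*}\log|y'|+C$ for $|y'|$ large. Splitting the integral over $U$ according to the size of $|y'-x'|$, the dominant contribution for a point $x$ near the interface $x_N\approx c^*t$ comes from $y'$ with $|y'-x'|$ of order $\sqrt{t}$ (the natural diffusive scale in the $x'$-directions), so that the $y_N$-integration over $(-\infty,\gamma(y')]$ contributes a factor controlled by $\gamma(y')$, which is at most $\frac{2\sigma}{c^*}\log(\sqrt t)+O(1)=\frac{\sigma}{c^*}\log t+O(1)$ in the relevant regime. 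Combining the transverse Gaussian factor $t^{-(N-1)/2}$ from the $y'$-integration with this contribution and with the longitudinal factor, and matching against the threshold $\overline{u}=\lambda$, one reads off that the interface sits at $c^*t-\frac{3-\sigma}{c^*}\log t+o(\log t)$: the ``$3$'' is the usual Bramson-type logarithmic correction (coming from $f'(0)t$ versus $e^{-x_N^2/4t}$ on the scale $x_N\sim c^*t$, together with the $N=1$ longitudinal Gaussian normalization), and the ``$-\sigma$'' is exactly the gain afforded by the vertical slab $(-\infty,\gamma(y')]$ having logarithmically growing thickness rather than being a half-space or a bounded set. This mirrors the mechanism behind the bounds~\eqref{E<U} and~\eqref{lagbetween}: for a half-space ($\sigma$ effectively $+\infty$) one gets the $3/c^*$ lag, for compact support ($\sigma=-(N-1)$) one gets $(N+2)/c^*$, and the intermediate $\sigma$ interpolates.

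The main obstacle, I expect, is making the heat-kernel estimate uniform and two-sided enough to pin the coefficient down to $\frac{3-\sigma}{c^*}$ rather than merely bounding it between the two extremes in~\eqref{lagbetween}. The delicate point is that a crude bound $\1_U\le\1_{\{x_N\le g(x')\}}$ for a clean comparison function $g$ (say $g(x')=\frac{2\sigma}{c^*}\log(2+|x'|)+C$) may lose a constant but should not lose a logarithmic factor, provided one chooses $g$ concave-like and handles the $y'$-integral by a Laplace/saddle-point analysis on the scale $|y'-x'|\sim\sqrt t$ carefully; the $\limsup$ in~\eqref{hyplag} only controls $\gamma$ for large $|y'|$, so one must separately check that the bounded region $\{|y'|\le M\}$ of $U$ contributes at most like the compactly-supported case, i.e.\ a lag $\ge\frac{N+2}{c^*}\log t$, which is consistent with~\eqref{ineqlag} precisely because $\sigma\ge-(N-1)$ forces $3-\sigma\le N+2$. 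Once the linear upper bound is in hand, translating it back to $u$ via $u\le\overline u$ and the monotonicity in $x_N$ gives~\eqref{ineqlag}, with the local uniformity in $x'$ coming from the fact that shifting $x'$ by a bounded amount only perturbs the relevant $y'$-integral by lower-order terms. The statement being only a one-sided bound with an $o(\log t)$ error, I would not attempt to optimize constants, which keeps the argument essentially to these three steps.
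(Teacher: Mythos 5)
Your reduction to the linearized equation cannot produce the coefficient $\frac{3-\sigma}{c^*}$; it is off by exactly the Bramson correction $\frac{2}{c^*}\log t$, and this gap is fatal to the approach. Carrying out the heat-kernel computation you sketch: the $y_N$-integration over $(-\infty,\gamma(y')]$ at height $x_N=c^*t-s$ produces a factor $\sim\frac{t}{x_N}e^{-(x_N-\gamma(y'))^2/4t}\approx \frac1{c^*}e^{-x_N^2/4t}e^{c^*\gamma(y')/2}$, the transverse integration of $e^{-|y'|^2/4t}|y'|^{\sigma}$ contributes $t^{(\sigma+N-1)/2}$, and after multiplying by $e^{f'(0)t}(4\pi t)^{-N/2}$ one finds $\overline u(t,0,c^*t-s)\asymp t^{(\sigma-1)/2}e^{c^*s/2}$, so the level set of the \emph{linear} solution sits at lag $s=\frac{1-\sigma}{c^*}\log t+O(1)$, not $\frac{3-\sigma}{c^*}\log t$. (Sanity check in your own terms: for a half-space the linearization gives lag $\frac{1}{c^*}\log t$, whereas the true nonlinear lag you quote is $\frac{3}{c^*}\log t$; the missing $\frac{2}{c^*}\log t$ is precisely Bramson's correction, which comes from the nonlinear saturation near the front and is invisible to the comparison $u\le\overline u$.) Since $X_\lambda\le c^*t-\frac{1-\sigma}{c^*}\log t$ is strictly weaker than the claimed $X_\lambda\le c^*t-\frac{3-\sigma}{c^*}\log t+o(\log t)$, your argument proves a weaker statement than the proposition.

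The paper injects the missing nonlinear information differently: it covers the subgraph $U$ by balls $B_\delta(k,\gamma(k)-h)$, $(k,h)\in\Z^{N-1}\times\N$, and takes as supersolution $\min(v,1)$ with $v(t,x)=\sum_{k,h}w(t,x'-k,x_N-\gamma(k)+h)$, where $w$ is the \emph{nonlinear} solution from a compactly supported radial datum; the subadditivity $f(\sum a_j)\le\sum f(a_j)$ guaranteed by~\eqref{fkpp} makes this sum a supersolution. The sharp inputs are the known bounds $w(t,x)\le C\,\varphi^*(|x|-\rho(t))$ with $\rho(t)=c^*t-\frac{N+2}{c^*}\log t+O(1)$ and $\varphi^*(r)\le Ar e^{-c^*r/2}$, which already contain the full Bramson shift. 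Evaluating $v$ at $(0,\rho(t)+\beta\log t)$, the lattice sum $\sum_{|k|\le t^{\alpha}}e^{c^*\gamma(k)/2}\sim t^{\alpha(\sigma+N-1)}$ with $\alpha\downarrow 1/2$ shows $v\to0$ whenever $\beta>\frac{\sigma+N-1}{c^*}$, and $\frac{N+2}{c^*}-\frac{\sigma+N-1}{c^*}=\frac{3-\sigma}{c^*}$ gives the claimed coefficient. If you want to keep a comparison-based proof, you must replace the linear supersolution by one built from genuinely nonlinear profiles (or from the $1$-dimensional Bramson asymptotics); the purely linear route cannot close the gap.
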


Theorem~\ref{thm:normdelay} and Proposition~\ref{pro:lag} are shown in Section~\ref{sec62}. Property~\eqref{ineqlag} means that the lag $c^*t-X_\lambda(t,x')$ is at least~$((3-\sigma)/c^*)\log t+o(\log t)$ as $t\to+\infty$. We point out that this holds even for positive $\sigma$. We conjecture that the estimate~\eqref{ineqlag} is actually sharp, in the sense that if the $\limsup$ is replaced by a limit in~\eqref{hyplag} and the inequality by an equality, then the lag should precisely be
$$c^*t-X_\lambda(t,x')=\frac{3-\sigma}{c^*}\log t+o(\log t)\quad\text{as $t\to+\infty$},$$
for every $\lambda\in(0,1)$ and~$x'\in\R^{N-1}$. When $\sigma=0$, this formula gives the $1$-dimensional lag, which is indeed the correct one for planar initial data. The above formula would also mean that the constant $-2(N-1)/c^*$ in~\eqref{gamma<} is optimal for the lag to be equivalent to that of solutions with compactly supported initial conditions. Lastly, it would provide a continuum of logarithmic lags with factors ranging in the whole half-line $(-\infty,(N+2)/c^*]$. In particular, solutions with initial conditions of the type~\eqref{defu0bis} with~$\gamma(x')\sim(6/c^*)\log|x'|$ as~$|x'|\to+\infty$ would have no logarithmic lag, i.e., the same position~$c^*t$ along the $x_N$-axis as that of the planar front moving in the direction $\mathrm{e}_N$, up to a $o(\log t)$ term as $t\to+\infty$. On the other hand, a subgraph $U$ satisfying~$\gamma(x')\sim \kappa\log|x'|$ as~$|x'|\to+\infty$ for some $\kappa>6/c^*$ would lead to a negative logarithmic lag, i.e., the position of the solution would be ahead of that of the front by a logarithmic-in-time term (observe that the term is linear in time when $\gamma(x')\sim \alpha |x'|$ as~$|x'|\to+\infty$ with $\alpha>0$, according to formulas~\eqref{FGgeneral} and~\eqref{Wshift}).

Using Theorem~\ref{thm:normdelay} we are able to prove Conjecture~\ref{conj1} about the flattening of the level sets under the hypotheses of that theorem.

\begin{corollary}\label{cor:DGlag}
Assume that $f$ is of the strong Fisher-KPP type~\eqref{fkpp} and let $u$ be the solution of~\eqref{homo} with an initial condition $u_0$ satisfying~\eqref{defu0bis} and~\eqref{gamma<}. Then the following hold:
\begin{itemize}
\item[{\rm{(i)}}] the conclusion~\eqref{limit2} of Conjecture~$\ref{conj1}$ holds, and even locally uniformly with respect to $\lambda\in(0,1)$, that is,
$$\nabla_{\!x'}X_\lambda(t,x')\to0\ \hbox{ as }t\to+\infty,\hbox{ locally uniformly in $x'\in\R^{N-1}$ and in $\lambda\in(0,1)$};$$
\item[{\rm{(ii)}}] for any $\lambda\in(0,1)$ and~$x'_0\in\R^{N-1}$, the limit function
$$\t u(t,x_N):=\lim_{s\to+\infty}u(s+t,x',X_\lambda(s,x'_0)+x_N),$$
which exists $($up to subsequences$)$ locally uniformly with respect to $(t,x',x_N)\in\R\times\R^N$, is independent of $x'$ and satisfies
$$\lim_{x_N\to-\infty}\t u(t,x_N+c^*t)=1\ \hbox{ and }\ \lim_{x_N\to+\infty}\t u(t,x_N+c^*t)=0,$$ 
uniformly with respect to $t\in\R$.
\end{itemize}
\end{corollary}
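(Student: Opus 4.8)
The plan is to combine the sharp level-set location of Theorem~\ref{thm:normdelay} with the asymptotic local one-dimensional symmetry of $u$ (which holds in the present setting by the results of~\cite{HR2}; see also Proposition~\ref{cor:flatteningKPP}, and note that~\eqref{gamma<} implies~\eqref{gamma<0}), via a parabolic compactness argument; both~(i) and~(ii) will drop out of it. Fix $\lambda\in(0,1)$, $x'_0\in\R^{N-1}$, a sequence $s_n\to+\infty$, and translate: set $u_n(t,x',x_N):=u(s_n+t,x'_0+x',X_\lambda(s_n,x'_0)+x_N)$, so $u_n(0,0,0)=\lambda$. By interior parabolic estimates, up to a subsequence $u_n\to v$ in $C^{1;2}_{loc}(\R\times\R^N)$, where $v$ solves~\eqref{homo} on $\R\times\R^N$, $0\le v\le1$, $\partial_{x_N}v\le0$, and $0<v<1$ everywhere by the strong maximum principle (since $v(0,0,0)=\lambda$). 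The function $\tilde u$ of~(ii) is such a $v$, once we know it is $x'$-independent.

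I would then localize the level sets of $v$ using Theorem~\ref{thm:normdelay}. For every compact $K\subset(0,1)$ and $R>0$, uniformly in $\mu\in K$, $|x'|\le R$ and bounded $t$, $X_\mu(s_n+t,x'_0+x')-X_\lambda(s_n,x'_0)=c^*t-\frac{N+2}{c^*}\log(1+t/s_n)+O_{K,R}(1)\to c^*t+O_{K,R}(1)$ as $n\to+\infty$; since, by monotonicity in $x_N$, the $\mu$-level set of $u_n(t,\cdot)$ at abscissa $x'$ is the graph $x_N=X_\mu(s_n+t,x'_0+x')-X_\lambda(s_n,x'_0)$, passing to the limit yields, for each $K\Subset(0,1)$ and $R>0$, a constant $C_{K,R}$ with $v(t,x',x_N)\ge\max K$ if $x_N\le c^*t-C_{K,R}$ and $v(t,x',x_N)\le\min K$ if $x_N\ge c^*t+C_{K,R}$ (for $|x'|\le R$ and all $t$). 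Sending $\min K\to0$, $\max K\to1$: $v(t,x',-\infty)=1$, $v(t,x',+\infty)=0$, hence $\partial_{x_N}v\not\equiv0$, so $\partial_{x_N}v<0$ everywhere. Moreover the ``$\le$'' half of Theorem~\ref{thm:normdelay} is \emph{uniform in} $x'\in\R^{N-1}$, so in the limit: for each $K\Subset(0,1)$ there is $C_K$ such that the $\mu$-level set of $v(t,\cdot)$ is contained in $\{x_N\le c^*t+C_K\}$ for all $\mu\in K$, $x'\in\R^{N-1}$ and $t$ (in fact in the slab $\{|x_N-c^*t|\le C_K\}$).

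Now for each $t_0$, $v(t_0,\cdot)=\lim_n u(s_n+t_0,(0,X_\lambda(s_n,x'_0))+\cdot)\in\Omega(u)$, so by the asymptotic local one-dimensional symmetry there are a strictly decreasing $\Psi_{t_0}$ and $e_{t_0}\in\Sph$ with $v(t_0,x)=\Psi_{t_0}(x\cdot e_{t_0})$ ($v(t_0,\cdot)$ is not constant, its $x_N$-limits differing); since $\partial_{x_N}v<0$, the $\mathrm{e}_N$-component of $e_{t_0}$ is positive. If the $\R^{N-1}$-component $e'$ of $e_{t_0}$ were nonzero, the hyperplane $\{x:x\cdot e_{t_0}=\Psi_{t_0}^{-1}(\mu)\}$ would contain points with $x_N=(\Psi_{t_0}^{-1}(\mu)-e'\cdot x')/(e_{t_0}\cdot\mathrm{e}_N)$ arbitrarily large (take $x'=-s\,e'/|e'|$, $s\to+\infty$), contradicting the barrier $\{x_N\le c^*t_0+C_K\}$; hence $e_{t_0}=\mathrm{e}_N$ and $v$ depends only on $(t,x_N)$, which is the $x'$-independence of $\tilde u$. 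For the boundary behaviour, let $Y_\mu(t)$ be the unique $x_N$ with $\tilde u(t,x_N)=\mu$; the two-sided localization (at $x'=x'_0$) gives $|Y_\mu(t)-c^*t|\le C_K$ for $\mu\in K\Subset(0,1)$, so by monotonicity $\tilde u(t,x_N+c^*t)\ge\max K$ for $x_N\le-C_K$ and $\le\min K$ for $x_N\ge C_K$, uniformly in $t$; letting $\max K\to1$, $\min K\to0$ gives $\tilde u(t,x_N+c^*t)\to1$ (resp.\ $\to0$) as $x_N\to-\infty$ (resp.\ $+\infty$), uniformly in $t\in\R$. This proves~(ii).

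For~(i): if it failed, then, negating the locally-uniform-in-$(\lambda,x')$ convergence, there would be a compact $K_0\subset(0,1)$, $R_0>0$, $\epsilon_0>0$, and sequences $s_n\to+\infty$, $|x'_n|\le R_0$, $\lambda_n\in K_0$ with $|\nabla_{x'}X_{\lambda_n}(s_n,x'_n)|\ge\epsilon_0$. Rerunning the above with base point $(x'_n,X_{\lambda_n}(s_n,x'_n))$ --- the uniformity over $K_0$ and $\overline{B'_{R_0}}$ in Theorem~\ref{thm:normdelay} is exactly what makes the localization step still work --- yields an $x'$-independent limit $v$, hence $\nabla_{x'}v(0,0,0)=0$; but from~\eqref{DchiDu}, $\nabla_{x'}X_{\lambda_n}(s_n,x'_n)=-\nabla_{x'}u_n(0,0,0)/\partial_{x_N}u_n(0,0,0)\to-\nabla_{x'}v(0,0,0)/\partial_{x_N}v(0,0,0)=0$ (the denominator being nonzero since $\partial_{x_N}v<0$), a contradiction, with the local uniformity in $\lambda$ built in. The main obstacle is the invocation, in the third paragraph, of the asymptotic local one-dimensional symmetry for the subgraph $U$ under~\eqref{gamma<}: this is where the strength of~\cite{HR2} is really needed. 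Granting it, Theorem~\ref{thm:normdelay} supplies everything else --- its two-sided level-set bound forces the nondegeneracy and the $\pm\infty$ limits of $v$, and its uniform-in-$x'$ upper bound selects the planar direction $\mathrm{e}_N$ rather than a tilted one (precisely what rules out the $V$-shaped limits occurring without~\eqref{gamma<0}); the rest is soft parabolic compactness.
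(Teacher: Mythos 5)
Your proof is correct, and it rests on the same two pillars as the paper's argument: the two-sided level-set localization of Theorem~\ref{thm:normdelay} (in particular the uniformity of the estimates in \eqref{Xl>}--\eqref{Xl<}) and the one-dimensional symmetry result of \cite{HR2}. The differences are organizational rather than substantive. For (i), the paper quotes \cite[Theorem~7.2]{HR2} directly in the form $\nabla_{\!x'}u\to0$ locally in $x'$ and uniformly in $x_N$ (so the planar direction $\mathrm{e}_N$ comes for free), and then obtains the needed lower bound on $|\partial_{x_N}u|$ along level sets by a mean-value argument: if $\partial_{x_N}u(t_n,\cdot)$ degenerated at level-set points it would degenerate locally uniformly by the strong maximum principle, forcing
$$\ul\lambda-\ol\lambda=\int_{X_{\ol\lambda}(t_n,x'_n)}^{X_{\ul\lambda}(t_n,x'_n)}\partial_{x_N}u(t_n,x'_n,x_N)\,dx_N$$
to vanish unless $X_{\ul\lambda}(t_n,x'_n)-X_{\ol\lambda}(t_n,x'_n)\to+\infty$, which contradicts Theorem~\ref{thm:normdelay}. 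You reach the same non-degeneracy by passing to the $\Omega$-limit and showing the limit connects $1$ to $0$ in $x_N$, which is an equivalent use of the two-sided bound. Your additional step of pinning down the planar direction as $\mathrm{e}_N$ via the uniform-in-$x'$ half of \eqref{lag1} (a tilted level hyperplane would escape the slab $\{|x_N-c^*t|\le C_K\}$) is not present in the paper, which gets the direction directly from the stronger statement of \cite[Theorem~7.2]{HR2}; but it is a correct, self-contained way to exclude tilted limits if one only grants the weaker ``planar in some direction'' form of the symmetry result, and it makes explicit why \eqref{gamma<} rules out the $V$-shaped scenarios. For (ii), your computation of the limiting position of the level sets and the derivation of the uniform-in-$t$ limits coincide with the paper's.
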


Corollary~\ref{cor:DGlag}, which is proved in Section~\ref{sec62}, shows that, in the large-time limit, the solution approaches a one-dimensional entire solution whose level sets move in the direction~$\mathrm{e}_N$ with an average velo\-city equal to the minimal speed~$c^*$. It is then natural to expect that $\t u(t,x_N)=\varphi^*(x_N-c^*t+(\varphi^*)^{-1}(\lambda))$ for all $(t,x_N)\in\R^2$, where $\varphi^*$ is the front profile connecting~$1$ and~$0$ with minimal speed $c^*$. That would correspond to property~\eqref{convphi} in Conjecture~\ref{conj3}.

\begin{remark}
Condition~\eqref{hyp:U}, used in~{\rm{\cite{HR1}}} to estimate the spreading speeds of solutions with more general reactions $f$, is not assumed in Theorem~$\ref{thlag}$. Regarding condition~\eqref{dUrho} used in~{\rm{\cite{HR1}}} too, it is assumed here only for property~\eqref{E>U} and the $O(\log t)$ estimate~\eqref{dH2} holding under the KPP assumption~\eqref{kpp0}. Notice that the estimate~\eqref{dH2} is more precise than the $o(t)$ estimate derived in~{\rm{\cite{HR1}}}, for more general reactions $f$. Furthermore, as in Remark~$\ref{rem37}$, conditions~\eqref{hyp:U} and~\eqref{dUrho} used in~{\rm{\cite{HR1}}} for the spreading speeds in all directions are in general not related to the conditions~\eqref{gamma<} and~\eqref{hyplag} used in Theorem~$\ref{thm:normdelay}$, Proposition~$\ref{pro:lag}$ and Corollary~$\ref{cor:DGlag}$, to estimate the position of the level sets in the direction $\mathrm{e}_N$ locally with respect to the orthogonal variables. For instance, if $\alpha<\beta<0$, if $(\varepsilon_k)_{k\in\N}$ is any sequence of positive real numbers converging to $0$, and if
$$\gamma(x')=\alpha\,|x'|+|\beta|\sum_{k=1}^\infty k\,\1_{(k,k+\varepsilon_k)}(|x'|),$$
then conditions~\eqref{gamma<} and~\eqref{hyplag} are satisfied, but the set $U$ defined by~\eqref{defu0bis} does not satisfy~\eqref{hyp:U} or~\eqref{dUrho} $($here, $\mc{B}(U)=\{e=(e',e_n)\in\Sph:e_n>(\alpha+|\beta|)|e'|\}$ and $\mc{U}(U_\rho)=\{e=(e',e_n)\in\Sph:e_n\le\alpha|e'|\}$ for any $\rho>0$$)$. Conversely, if $\gamma(x')=\alpha|x'|$ with $\alpha>0$, then~\eqref{hyp:U} and~\eqref{dUrho} are satisfied, but~\eqref{gamma<} or~\eqref{hyplag} are not.
\end{remark}

	
\section{The subgraph case: proofs of Theorems~\ref{thm:subgraph} and~\ref{thm:conical}, and Proposition~\ref{pro:asymp}}\label{sec5}

Section~\ref{sec51} is devoted to the proof of Theorem~\ref{thm:subgraph} about the flatness property of the level sets of solutions at large time if the initial support is below a graph which is not coercive at infinity. Section~\ref{sec52} contains the proofs of other flatness results and weaker versions of Conjecture~\ref{conj1}. In Sections~\ref{sec53} and~\ref{sec54}, we respectively prove Theorem~\ref{thm:conical} on the case of asymptotically conical or more general initial support, and Proposition~\ref{pro:asymp} on the counterexample to the global flatness of the level sets even if the initial support is asymptotically flat.


\subsection{Proof of Theorem~\ref{thm:subgraph}}\label{sec51}

We start with two auxiliary lemmas that will be used in the proof of Theorem~\ref{thm:subgraph} as well as in Section~\ref{sec52}.

\begin{lemma}\label{lem:X/t}
Assume that Hypothesis~$\ref{hyp:minimalspeed}$ holds $($hence Hypothesis~$\ref{hyp:invasion}$ as well$)$. Let $c^*>0$ be the minimal speed given in Section~$\ref{sec22}$, and let $u$ be a solution of~\eqref{homo} with an initial datum~$u_0$ given by~\eqref{defu0bis} with $\gamma$ satisfying~\eqref{gamma<0}. Then, for every $\lambda\in(0,1)$ and every~$x'_0\in\R^{N-1}$, there holds that	
\be\label{X=ct+o}
X_\lambda(t,x'_0)=c^*t+o(t)\ \hbox{ as }t\to+\infty,
\ee
and moreover
\be\label{X<ct}
\forall\,\alpha>0,\quad\max_{x'\in\overline{B'_{\alpha t}(x'_0)}}X_\lambda(t,x')\leq c^*t+o(t)\ \hbox{ as }t\to+\infty.
\ee	
\end{lemma}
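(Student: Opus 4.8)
The plan is to sandwich the level-set height $X_\lambda(t,x')$ between a lower bound coming from the invasion property (Hypothesis~\ref{hyp:invasion}, which holds because Hypothesis~\ref{hyp:minimalspeed} is assumed) and an upper bound coming from a comparison with planar fronts steered by the non-coercivity condition~\eqref{gamma<0}. For the lower bound on $X_\lambda(t,x'_0)$: since $U$ contains the ball $B_\rho(x_0)$ for $x_0=(x'_0,\gamma(x'_0)-\rho)$ (up to adjusting for the local essential bound on $\gamma$ near $x'_0$), property~\eqref{c<c*} gives $\min_{|x-x_0|\le ct}u(t,x)\to1$ for any $c<c^*$; in particular $u(t,x'_0,X)\ge\lambda$ whenever $|X-(\gamma(x'_0)-\rho)|\le ct$ for $t$ large, which forces $X_\lambda(t,x'_0)\ge ct + O(1)$ for every $c<c^*$, hence $\liminf_{t\to\infty}X_\lambda(t,x'_0)/t\ge c^*$. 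This also handles the lower-bound part implicitly needed to make~\eqref{X=ct+o} an equality once the upper bound is in place.

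For the upper bound, and this is where~\eqref{gamma<0} enters: fix $\varepsilon>0$. By~\eqref{gamma<0} there is $M_\varepsilon\in\R$ with $\gamma(x')\le M_\varepsilon+\varepsilon|x'|$ for all $x'$, so $U\subset U^\varepsilon:=\{(x',x_N):x_N\le M_\varepsilon+\varepsilon|x'|\}$, and by the comparison principle $u\le u_\varepsilon$ where $u_\varepsilon$ solves~\eqref{homo} with initial datum $\1_{U^\varepsilon}$. The set $U^\varepsilon$ satisfies~\eqref{hyp:U} (its directions are, up to the apex, those where the cone opens), so the spreading results of Section~\ref{sec23} apply to $u_\varepsilon$: by~\eqref{ass-cpt}--\eqref{FGgeneral}, or more concretely by~\eqref{Wshift} with $\alpha$ replaced by $\varepsilon$, one has $u_\varepsilon(t,x',x_N)\to0$ whenever $x_N\ge \varepsilon|x'| + (c^*\sqrt{1+\varepsilon^2}+\delta)t$ for any $\delta>0$, uniformly on the relevant cone region. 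Applied at $x'=x'_0$ fixed this gives $u(t,x'_0,x_N)\le u_\varepsilon(t,x'_0,x_N)<\lambda$ once $x_N\ge (c^*\sqrt{1+\varepsilon^2}+\delta)t + O(1)$, hence $X_\lambda(t,x'_0)\le (c^*\sqrt{1+\varepsilon^2}+\delta)t+o(t)$; letting $\delta\to0$ and $\varepsilon\to0$ yields $\limsup_{t\to\infty}X_\lambda(t,x'_0)/t\le c^*$, which combined with the lower bound proves~\eqref{X=ct+o}.

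For~\eqref{X<ct} the same comparison works uniformly over $x'\in\overline{B'_{\alpha t}(x'_0)}$: since $|x'|\le |x'_0|+\alpha t$, we have along such $x'$ that any point $(x',x_N)$ with $x_N\ge \varepsilon(|x'_0|+\alpha t) + (c^*\sqrt{1+\varepsilon^2}+\delta)t$ lies in the region where $u_\varepsilon(t,\cdot)<\lambda$; thus $\max_{x'\in\overline{B'_{\alpha t}(x'_0)}}X_\lambda(t,x')\le (c^*\sqrt{1+\varepsilon^2}+\varepsilon\alpha+\delta)t + O(1)$, and sending $\delta\to0$ then $\varepsilon\to0$ gives the claimed $c^*t+o(t)$. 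The one point requiring a little care is that the convergence $u_\varepsilon(t,tx)\to0$ in~\eqref{ass-cpt} is stated for $x$ ranging in a fixed compact set $C\subset\R^N\setminus\overline{\mathcal W}$, whereas here the rescaled points $t^{-1}(x',x_N)$ with $x'\in\overline{B'_{\alpha t}(x'_0)}$ range over a compact set depending only on $\alpha$ and $c^*\sqrt{1+\varepsilon^2}$ but not on $t$ — so one should fix $\varepsilon$ first, choose $C$ to be a suitable compact subset of $\R^N\setminus\overline{\mathcal W^\varepsilon}$ large enough to contain all these rescaled points for $t$ large, apply~\eqref{ass-cpt}, and only then let $\varepsilon\to0$. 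The main obstacle is thus not conceptual but organizational: getting the order of quantifiers ($\alpha$ fixed, then $\varepsilon$, then the compact set, then $t\to\infty$, then $\varepsilon\to0$) right so that the $o(t)$ is genuinely uniform over the growing balls, and verifying that $U^\varepsilon$ indeed satisfies~\eqref{hyp:U} so that the machinery of Section~\ref{sec23} is applicable.
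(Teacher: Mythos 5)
Your proposal is correct and follows essentially the same route as the paper: the lower bound via \eqref{c<c*} applied to a ball contained in $U$ near $x'_0$, and the upper bound via comparison with the solutions $u_\varepsilon$ emanating from the cones $U^\varepsilon=\{x_N\le M_\varepsilon+\varepsilon|x'|\}$, followed by the same computation showing that points at height $c t$ with $c>c^*$ over $\overline{B'_{\alpha t}}$ lie (after rescaling by $t$) at distance greater than $c^*$ from $\t U^\varepsilon$ once $\varepsilon$ is small. The only cosmetic difference is that you invoke the local spreading-set convergence \eqref{ass-cpt} and \eqref{Wshift} (hence \eqref{hyp:U} for $U^\varepsilon$), whereas the paper invokes the global Hausdorff estimate \eqref{dH} via \eqref{dUrho}; both hypotheses hold for $U^\varepsilon$ and the resulting inequality is the same.
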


\begin{proof}
Since hypothesis~\eqref{gamma<0} on the initial datum $u_0$ is invariant by translation of the coordinate system of $\R^{N-1}$, we can restrict without loss of generality to the case
$$x'_0=0.$$
Fix $\lambda\in(0,1)$. Because~$u_0$ is given by~\eqref{defu0bis} with $\gamma\in L^\infty_{loc}(\R^{N-1})$, there is $x_0\in\R^N$ such that $1\ge u_0\ge\1_{B_\rho(x_0)}$ in~$\R^N$, with $\rho>0$ given by Hypothesis~\ref{hyp:invasion}. Property~\eqref{c<c*} and the monotonicity of $u(t,x)$ with respect to $x_N$ then imply that
$$\liminf_{t\to+\infty}\frac{X_\lambda(t,0)}{t}\ge c^*.$$

It remains to show~\eqref{X<ct}. Together with the previous formula,~\eqref{X<ct} will then yield~\eqref{X=ct+o}. To show~\eqref{X<ct}, we compare $u$ with the same functions $(u_\e)_{\e>0}$ as in the paragraph following Proposition~\ref{pro:asymp}, i.e., with the solutions $u_\e$ of~\eqref{homo} with initial conditions $\1_{U^\varepsilon}$, where $U^\varepsilon:=\{(x',x_N)\in\R^N:x_N\le M_\varepsilon+\varepsilon|x'|\}$ and $M_\varepsilon\in\R$ is chosen so that $\gamma(x')\leq M_\varepsilon+\varepsilon|x'|$ for $x'\in\R^{N-1}$, which is possible thanks to assumption~\eqref{gamma<0}. It follows that $u\leq u_\e$ for any $\e>0$. The set~$U^\varepsilon$ satisfies~\eqref{dUrho}, for any given $\e>0$, therefore \cite[Theorem~2.4]{HR1} applies and yields the asymptotic formula~\eqref{dH} for the upper level sets $F_\lambda^\e(t)$ of $u_\e(t,\.)$ in terms of $t^{-1}U^\e+B_{c^*}$. Since the sets $t^{-1}U^\e$ approach $\t U^\varepsilon:=\{(x',x_N)\in\R^N:x_N\le \varepsilon|x'|\}$  in Hausdorff distance as $t\to+\infty$, the formula rewrites~as
\Fi{dHepsilon}
d_{\mc{H}}\big(t^{-1}F_\lambda^\e(t) \,,\, \t U^\e+B_{c^*}\big)\to0\ \text{ as }t\to+\infty.
\Ff
Take now $\alpha>0$ and $c>c_*$. Consider $x'\in \ol{B'_{\alpha}}$. For $0<\e<c/\alpha$ one has that $(x',c)\notin \t U^\e$, hence $\dist((x',c),\t U^\e)$ is attained at some point $z\in\partial \t U^\e$. It is then straightforward to check that 
$$\dist((x',c),\t U^\e)=\frac{c-\e|x'|}{\sqrt{1+\e^2}}\geq\frac{c-\e\alpha}{\sqrt{1+\e^2}}.$$
We can then choose $\e>0$ small enough in such a way that the latter term is larger than~$c^*$, and therefore $\min_{x'\in\ol{B'_{\alpha}}}\dist((x',c),\t U^\e)>c^*$. By virtue of~\eqref{dHepsilon}, we deduce that $\max_{x'\in\ol{B'_{\alpha}}} u_\e(t,tx',ct)< \lambda$ for $t$ sufficiently large, and thus the same is true for $u$. This means that $\max_{x'\in\overline{B'_{\alpha t}(0)}}X_\lambda(t,x')<ct$ for $t$ large. Property~\eqref{X<ct} then follows by the arbitrariness of $c>c^*$.
\end{proof}

\begin{lemma}\label{lem:tech}
Assume that Hypothesis~$\ref{hyp:minimalspeed}$ holds, hence Hypothesis~$\ref{hyp:invasion}$ as well, for some~$\theta\in(0,1)$ and $\rho>0$. Let $u$ be a solution of~\eqref{homo} with an initial datum~$u_0$ given by~\eqref{defu0bis} with $\gamma$ satisfying~\eqref{gamma<0}, and let $F_\theta(t)$ be the upper level set~$\{x\in\R^N:u(t,x)>\theta\}$ and~$(X_\lambda)_{\lambda\in(0,1)}$ be the functions given by~\eqref{defX}. Then, for every $\lambda\in(0,1)$ and $\omega>0$, there exists $\bar R>0$ such that 
\be\label{tech}
\forall\,x'_0\in\R^{N-1},\quad\liminf_{t\to+\infty}\left(\sup_{x'\in\partial B'_{\bar R}(x'_0)}\dist\big((x',X_\lambda(t,x'_0)+\omega\bar R)\,,\,\R^N\setminus F_\theta(t)\big)\right)\leq\rho.
\ee
\end{lemma}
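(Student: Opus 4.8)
The plan is to argue by contradiction, playing the spreading estimate~\eqref{X=ct+o} of Lemma~\ref{lem:X/t} against the invasion property~\eqref{c<c*}. Assume~\eqref{tech} fails; negating the quantifiers, there are $\lambda\in(0,1)$ and $\omega>0$ such that for \emph{every} $\bar R>0$ one can find $x'_0\in\R^{N-1}$ (depending on $\bar R$) with $\liminf_{t\to+\infty}\big(\sup_{x'\in\partial B'_{\bar R}(x'_0)}\dist((x',X_\lambda(t,x'_0)+\omega\bar R),\R^N\setminus F_\theta(t))\big)>\rho$. For such $x'_0$ there is $T_0>0$ so that for every $t\ge T_0$ some point $x'(t)\in\partial B'_{\bar R}(x'_0)$ satisfies $\dist(y(t),\R^N\setminus F_\theta(t))>\rho$, where $y(t):=(x'(t),X_\lambda(t,x'_0)+\omega\bar R)$; hence $u(t,\cdot)\ge\theta\,\1_{B_\rho(y(t))}$ in $\R^N$.

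Next I would run a one-step invasion comparison. Let $v$ be the solution of~\eqref{homo} with initial datum $\theta\,\1_{B_\rho(0)}$; by Hypothesis~\ref{hyp:invasion} it is invading, so by~\eqref{c<c*} (valid under Hypothesis~\ref{hyp:minimalspeed}), for any fixed $c''\in(0,c^*)$ and any $\eta>0$ there is $S_0=S_0(\eta,c'')>0$, \emph{independent of $t$ and $x'_0$}, with $v(s,z)\ge1-\eta$ whenever $s\ge S_0$ and $|z|\le c''s$. Fix $\eta:=(1-\lambda)/2$; translating $v$ to be centered at $y(t)$ and applying the parabolic comparison principle gives $u(t+s,x)\ge v(s,x-y(t))\ge1-\eta>\lambda$ for $s\ge S_0$ and $|x-y(t)|\le c''s$. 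I would then choose $c''\in(c^*/\sqrt{1+\omega^2},c^*)$, take $\bar R$ large enough that $s:=\bar R\sqrt{1+\omega^2}/(c''\omega)\ge S_0$, and set $h:=\bar R/\omega$; then the point $x:=(x'_0,X_\lambda(t,x'_0)+\omega\bar R+h)$ satisfies $|x-y(t)|=\sqrt{\bar R^2+h^2}=c''s$, whence $u(t+s,x)>\lambda$ and, since $\partial_{x_N}u<0$ in $(0,+\infty)\times\R^N$,
\[
X_\lambda(t+s,x'_0)\ \ge\ X_\lambda(t,x'_0)+\omega\bar R+h\ =\ X_\lambda(t,x'_0)+s\,c''\sqrt{1+\omega^2}\qquad\text{for all }t\ge T_0.
\]
Iterating this along $t=T_0+ks$, $k\in\N$, gives $X_\lambda(T_0+ks,x'_0)\ge X_\lambda(T_0,x'_0)+ks\,c''\sqrt{1+\omega^2}$, so $\liminf_{t\to+\infty}X_\lambda(t,x'_0)/t\ge c''\sqrt{1+\omega^2}$; but~\eqref{X=ct+o} forces $X_\lambda(t,x'_0)/t\to c^*$, so $c^*\ge c''\sqrt{1+\omega^2}$, contradicting $c''>c^*/\sqrt{1+\omega^2}$. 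This yields~\eqref{tech}.

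The step I expect to be the real obstacle is the calibration of the horizontal scale $\bar R$ against the time scale $s$. The gap $\omega\bar R$ is a bounded constant, so comparing a single application of invasion directly against the linear front position $c^*t$ is hopeless — the sublinear error $o(t)$ in~\eqref{X=ct+o} overwhelms $\omega\bar R$. The point is geometric: the invaded ball handed to us by the negation lies only a bounded height above the level set over $x'_0$ but at horizontal distance $\bar R$ from it, so letting invasion run for a time $s$ proportional to $\bar R$ lets the invaded region sweep back over $x'_0$ and climb there to height $\approx X_\lambda(t,x'_0)+s\,c''\sqrt{1+\omega^2}$; only at the optimal ratio $\bar R/s=c''\omega/\sqrt{1+\omega^2}$ does the resulting average vertical speed $c''\sqrt{1+\omega^2}$ beat $c^*$, and it does so for \emph{every} $\omega>0$. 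One must also make sure the warm-up time $S_0$ depends only on $\eta,c'',\theta,\rho$ — hence the comparison with the single fixed profile $v$, merely translated — so that the iteration runs with a constant step $s$.
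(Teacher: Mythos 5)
Your proposal is correct and follows essentially the same route as the paper's proof: negate~\eqref{tech}, use the invaded ball $B_\rho(y(t))$ handed over by the negation as a sub-initial datum for the fixed invading solution $v$ with datum $\theta\,\1_{B_\rho}$, exploit the spreading property~\eqref{c<c*} at speed $c''\in(c^*/\sqrt{1+\omega^2},c^*)$ with the same geometric calibration $\bar R/s=c''\omega/\sqrt{1+\omega^2}$, and iterate to force $X_\lambda(t,x'_0)/t$ above $c^*$, contradicting Lemma~\ref{lem:X/t}. The only cosmetic difference is that the paper fixes $\bar R$ through a single time $T$ with $\min_{|x|\le cT}v(T,x)\ge\lambda$ rather than through the uniform-in-$s\ge S_0$ formulation you use; the argument is otherwise identical.
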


\begin{proof}
Fix a real number $c$ such that
\be\label{defc0}
\frac{c^*}{\sqrt{1+\omega^2}}<c<c^*.
\ee
Let $v$ be the solution of~\eqref{homo} with initial condition $v_0:=\theta\,\1_{B_\rho}$. By~\eqref{c<c*}-\eqref{c>c*}, the function~$v$ spreads with the speed $c^*$. In particular, we can find $T>0$ such that
\be\label{vlambda0}
\min_{|x|\le cT}v(T,x)\ge\lambda.
\ee
Call
$$\bar R:=\frac\omega{\sqrt{1+\omega^2}}\,cT.$$
For all $y'\in\R^{N-1}$ such that $|y'|=\bar R$, we compute
\begin{align*}
\big|(0,cT\sqrt{1+\omega^2})-(y',\omega\bar R)\big|&=cT\sqrt{\frac{\omega^2}{1+\omega^2}+\Big(\sqrt{1+\omega^2}-\frac{\omega^2}{\sqrt{1+\omega^2}}\Big)^2}=cT.
\end{align*}
It follows that
\Fi{v>l}
v\big(T,(0,cT\sqrt{1+\omega^2})-(y',\omega\bar R)\big)\geq\lambda.
\Ff

We now assume by way of contradiction that~\eqref{tech} does not hold. Namely, there exist  $x'_0\in\R^{N-1}$ and $\tau>0$ such that
$$\forall\,t\geq \tau,\quad \sup_{x'\in\partial B'_{\bar R}(x'_0)}\dist\big((x',X_\lambda(t,x'_0)+\omega\bar R)\,,\,\R^N\setminus F_\theta(t)\big)>\rho.$$
Because condition~\eqref{gamma<0} is invariant by translation of the coordinate system of~$\R^{N-1}$, we can assume without loss of generality that $x'_0=0$. Namely, for any $t\geq\tau$, there exists a point $y_t'\in\R^{N-1}$ with $|y_t'|=\bar R$ such that
$$u(t,x)>\theta\quad \text{for all }x\in B_\rho(y_t',X_\lambda(t,0)+\omega\bar R).$$
This means that, for all $t\geq\tau$, 
$$u\big(t,x+(0,X_\lambda(t,0))\big)\geq \theta\1_{B_\rho(y_t',\omega\bar R)}(x)= v_0(x-(y',\omega\bar R)),$$
hence, by comparison, thanks to \eqref{v>l} one infers
$$u\big(t+T,(0,cT\sqrt{1+\omega^2})+(0,X_\lambda(t,0))\big)\geq\lambda.$$
We have thereby shown that
$$\forall\, t\geq\tau,\quad X_\lambda(t+T,0)\geq X_\lambda(t,0)+cT\sqrt{1+\omega^2},$$
hence, by iteration,
$$\forall\, n\in\N,\quad X_\lambda(\tau+nT,0)\geq X_\lambda(\tau,0)+cnT\sqrt{1+\omega^2}.$$
Therefore,
$$\limsup_{t\to+\infty}\frac{X_\lambda(t,0)}{t}\ge\limsup_{n\to+\infty}\frac{X_\lambda(\tau+nT,0)}{\tau+nT}\geq\sqrt{1+\omega^2}\,c,$$
which is larger than $c^*$ by the choice of $c$. This is in contradiction with Lemma~\ref{lem:X/t}.
\end{proof}

\begin{proof}[Proof of Theorem~$\ref{thm:subgraph}$]
Throughout the proof, Hypothesis~\ref{hyp:minimalspeed} is assumed, thus Hypo\-thesis~\ref{hyp:invasion} holds too, by Section~\ref{sec22}. Let $\theta\in(0,1)$ and $\rho>0$ be given by Hypo\-thesis~\ref{hyp:invasion}, and let $c^*>0$ be the minimal speed of traveling fronts connecting $1$ to $0$. Let~$u$ be a solution to~\eqref{homo}, with an initial condition $u_0$ given by~\eqref{defu0bis}, where $\gamma:\R^{N-1}\to\R$ satisfies~\eqref{gamma<0}. The functions $X_\lambda:(0,+\infty)\times\R^{N-1}\to\R$ are given by~\eqref{defX}, for all $\lambda\in(0,1)$.

We will show~\eqref{liminf2}, which yields~\eqref{liminf}. To show~\eqref{liminf2}, we argue by way of contradiction. Namely, by assuming that~\eqref{liminf2} does not hold for some $\lambda\in[\theta,1)$ and some basis $(\mathrm{e}'_1,\cdots,\mathrm{e}'_{N-1})$ of $\R^{N-1}$, one will show that $u(T_n,x_n)=\lambda$ and $u(T_n+\tau_n,\xi_n)\ge\lambda$ for some sequences of large times $(T_n)$ and $(\tau_n)$ and points $(x_n)$ and $(\xi_n)$ of $\R^N$ having the same projections on $\R^{N-1}$ and such that the difference $(\xi_n-x_n)\cdot\mathrm{e}_N$ is large compared to $c^*\tau_n$. That will eventually lead to a spreading speed larger than $c^*$ in the direction $\mathrm{e}_N$, and then to a contradiction, thanks to Lemma~\ref{lem:X/t}.

Notice that the conclusion~\eqref{liminf2} could also be easily viewed as a consequence of Lemma~\ref{lem:tech} in dimension $N=2$. The arguments used below in the general case $N\ge2$ are actually more involved, and first require some notations.

\medskip
\noindent{\it Step 1: some notations}. In the sequel, we fix a basis $(\mathrm{e}'_1,\cdots,\mathrm{e}'_{N-1})$ of $\R^{N-1}$. The desired property~\eqref{liminf2} is invariant by multiplying any vector $\mathrm{e}'_i$ by any factor $\alpha_i\in\R^*$. Therefore, without loss of generality, one can assume in the sequel that each vector $\mathrm{e}'_i$ has unit norm in $\R^{N-1}$, that is,
$$|\mathrm{e}'_i|=1\quad \text{for each $1\le i\le N-1$}.$$
Observe that, for any $\epsilon=(\epsilon_i)_{1\le i\le N-1}\in\{-1,1\}^{N-1}$, one can choose a point $y'_{\epsilon}\in\R^{N-1}$ such that
$$\overline{B'_\rho(y'_\epsilon)}\subset\Big\{x'=\sum_{i=1}^{N-1}t_{i,\epsilon,x'}\epsilon_i\mathrm{e}'_i:t_{i,\epsilon,x'}\in\R^+\Big\},$$
where one recalls that the notation $B'_r(y')$ stands for the open Euclidean ball in $\R^{N-1}$ of center $y'\in\R^{N-1}$ and radius $r>0$. In the above formula, for any $x'\in\R^{N-1}$ and any $\epsilon=(\epsilon_1,\cdots,\epsilon_{N-1})\in\{-1,1\}^{N-1}$, the real numbers $t_{1,\epsilon,x'},\ldots,t_{N-1,\epsilon,x'}$ denote the (unique) coordinates of $x'$ in the basis $(\epsilon_1\mathrm{e}'_1,\cdots,\epsilon_{N-1}\mathrm{e}'_{N-1})$. One then defines a positive real number~$\rho'$ by
\be\label{defrho'}
\rho'=\max_{\epsilon\in\{-1,1\}^{N-1},\,x'\in\overline{B'_\rho(y'_\epsilon)},\,1\le i\le N-1}t_{i,\epsilon,x'}.
\ee

\medskip
\noindent{\it Step 2: the proof of~\eqref{liminf2}}. In addition to the basis $(\mathrm{e}'_1,\cdots,\mathrm{e}'_{N-1})$ of $\R^{N-1}$, we now fix any $\lambda\in[\theta,1)$. Assume by way of contradiction that~\eqref{liminf2} does not hold. Since the quantities involved in~\eqref{liminf2} are nonnegative and nonincreasing with respect to $R>0$, there exists then $\omega>0$ such that
\be\label{liminf3}
\forall\,R>0,\quad\sup_{x'_0\in\R^{N-1}}\Big[\liminf_{t\to+\infty}\Big(\min_{x'\in\overline{B'_R(x'_0)},\,1\le i\le N-1}|\nabla_{\!x'}X_\lambda(t,x')\cdot\mathrm{e}'_i|\Big)\Big]\ge3\,\omega.
\ee
We now fix a real number $c$ such that
\be\label{defc}
\frac{c^*}{\sqrt{1+\omega^2}}<c<c^*.
\ee
Let $v$ be the solution of~\eqref{homo} with initial condition $v_0:=\theta\,\1_{B_\rho}$. By~\eqref{c<c*}-\eqref{c>c*}, the function~$v$ spreads with the speed $c^*$. In particular, there is $T>0$ such that
\be\label{vlambda}
\min_{|x|\le ct}v(t,x)\ge\lambda\quad \text{for all }t\ge T.
\ee

Let us now consider any $n\in\N$ and apply~\eqref{liminf3} with $R=n+(N-1)\rho'>0$, with $\rho'>0$ given in~\eqref{defrho'}. There is then a point $x'_n\in\R^{N-1}$ such that
$$\liminf_{t\to+\infty}\Big(\min_{x'\in\overline{B'_{n+(N-1)\rho'}(x'_n)},\,1\le i\le N-1}|\nabla_{\!x'}X_\lambda(t,x')\cdot\mathrm{e}'_i|\Big)\ge2\omega.$$
Since the function $X_\lambda$ is at least of class $C^1$ in $(0,+\infty)\times\R^{N-1}$ from the implicit function theorem and the negativity of $\partial_{x_N}u$ in $(0,+\infty)\times\R^N$, it follows by continuity that there exist $T_n>0$ and $\epsilon_n=(\epsilon_{n,i})_{1\le i\le N-1}\in\{-1,1\}^{N-1}$ such that
\be\label{defTn}
\nabla_{\!x'}X_\lambda(t,x')\cdot(\epsilon_{n,i}\mathrm{e}'_i)\ge\omega\quad \text{for all $t\ge T_n$, $x'\in\overline{B'_{n+(N-1)\rho'}(x'_n)}$ and $1\le i\le N-1$}.
\ee
One then infers from the fundamental theorem of calculus and from the definitions of $y'_{\epsilon_n}$ and $\rho'$ in Step~2, that
$$X_\lambda(T_n,x'_n+n\,\epsilon_{n,1}\,\mathrm{e}'_1)\ge X_\lambda(T_n,x'_n)+\omega\,n$$
and then, for any $x'\in\overline{B'_\rho(y'_{\epsilon_n})}$,
\be\label{ineqs}\baa{rcl}
X_\lambda(T_n,x'_n+n\,\epsilon_{n,1}\,\mathrm{e}'_1+x') & \ge & \displaystyle X_\lambda(T_n,x'_n+n\,\epsilon_{n,1}\,\mathrm{e}'_1)+\sum_{i=1}^{N-1}\omega\,\underbrace{t_{i,\epsilon_n,x'}}_{\ge0}\vspace{3pt}\\
& \ge & X_\lambda(T_n,x'_n)+\omega\,n.\eaa
\ee
Call
\be\label{defzn}
z'_n=x'_n+n\,\epsilon_{n,1}\,\mathrm{e}'_1+y'_{\epsilon_n}\in\R^{N-1}\ \hbox{ and }z_n=\big(z'_n,X_\lambda(T_n,x'_n)+\omega\,n-\rho\big)\in\R^N.
\ee
For any $x=(x',x_N)\in\overline{B_\rho(z_n)}$, there holds $x'\in\overline{B'_\rho(z'_n)}=x'_n+n\,\epsilon_{n,1}\,\mathrm{e}'_1+\overline{B'_\rho(y'_{\epsilon_n})}$ and $x_N\le X_\lambda(T_n,x'_n)+\omega\,n$, hence
$$X_\lambda(T_n,x')\ge X_\lambda(T_n,x'_n)+\omega\,n\ge x_N$$
by~\eqref{ineqs}. From the definition~\eqref{defX} of $X_\lambda$ and the fact that $u$ is decreasing with respect to $x_N$ in~$(0,+\infty)\times\R^N$, one then infers that
$$u(T_n,\cdot)\ge\lambda\ge\theta\ \hbox{ in $\overline{B_\rho(z_n)}$}.$$
Hence, $u(T_n,\cdot)\ge v_0(\cdot-z_n)$ in $\R^N$, and
\be\label{uvTn}
u(T_n+t,\cdot)\ge v(t,\cdot-z_n)\ \hbox{ in $\R^N$ for all $t>0$}
\ee
from the maximum principle.

In addition to~\eqref{defzn}, let us now introduce a few other notations, for each $n\in\N$. Call
\be\label{defxnxin}
x_n=(x'_n,X_\lambda(T_n,x'_n))\in\R^N,\ \xi_n=\Big(x'_n,X_\lambda(T_n,x'_n)+|x_n-z_n|\,\frac{\sqrt{1+\omega^2}}{\omega}\Big)\in\R^N,
\ee
and
$$\tau_n=\frac{|\xi_n-z_n|}{c}.$$
Remember that the sequence $(|y'_{\epsilon_n}|)_{n\in\N}$ takes only a finite number of values, and is therefore bounded. It is then easy to check from~\eqref{defzn} and~\eqref{defxnxin} that
$$|x_n-z_n|\sim n\,\sqrt{1+\omega^2},\ \ |x_n-\xi_n|\sim n\,\frac{1+\omega^2}{\omega},\ \ |\xi_n-z_n|\sim n\,\frac{\sqrt{1+\omega^2}}{\omega},\ \ \tau_n\sim n\,\frac{\sqrt{1+\omega^2}}{c\,\omega},$$
as $n\to+\infty$. In other words, the angle between the segments $[z_n,x_n]$ and $[z_n,\xi_n]$ is almost right, and then the angle between the segments $[x_n,z_n]$ and $[x_n,\xi_n]$ is almost $\arccos(\omega/\sqrt{1+\omega^2})=\pi/2-\arctan\omega$. As a consequence, $\tau_n\to+\infty$ as $n\to+\infty$, and
$$\frac{|x_n-z_n|}{\tau_n}\,\frac{\sqrt{1+\omega^2}}{\omega}\to c\,\sqrt{1+\omega^2}>c^*\ \hbox{ as $n\to+\infty$},$$
by~\eqref{defc}. We can then fix $n_0\in\N$ such that
\be\label{defn0}
\tau_{n_0}\ge T\ \hbox{ and }\ \frac{|x_{n_0}-z_{n_0}|}{\tau_{n_0}}\,\frac{\sqrt{1+\omega^2}}{\omega}>c^*,
\ee
with $T>0$ defined in~\eqref{vlambda}.

Lastly,~\eqref{vlambda} and~\eqref{uvTn} yield
$$u\Big(T_{n_0}+\tau_{n_0},x'_{n_0},X_\lambda(T_{n_0},x'_{n_0})+|x_{n_0}-z_{n_0}|\,\frac{\sqrt{1+\omega^2}}{\omega}\Big)\!=\!u(T_{n_0}+\tau_{n_0},\xi_{n_0})\!\ge\!v(\tau_{n_0},\xi_{n_0}-z_{n_0})\!\ge\!\lambda,$$
hence $X_\lambda(T_{n_0}+\tau_{n_0},x'_{n_0})\ge X_\lambda(T_{n_0},x'_{n_0})+|x_{n_0}-z_{n_0}|\,\sqrt{1+\omega^2}/\omega$. Starting again from~\eqref{defTn} (applied with $n=n_0$) and repeating the above arguments, one infers that
$$u\Big(T_{n_0}+2\tau_{n_0},x'_{n_0},X_\lambda(T_{n_0},x'_{n_0})+2\,|x_{n_0}-z_{n_0}|\,\frac{\sqrt{1+\omega^2}}{\omega}\Big)\ge\lambda$$
and $X_\lambda(T_{n_0}+2\tau_{n_0},x'_{n_0})\ge X_\lambda(T_{n_0},x'_{n_0})+2\,|x_{n_0}-z_{n_0}|\,\sqrt{1+\omega^2}/\omega$. By an immediate induction, there holds
$$X_\lambda(T_{n_0}+k\tau_{n_0},x'_{n_0})\ge X_\lambda(T_{n_0},x'_{n_0})+k\,|x_{n_0}-z_{n_0}|\,\frac{\sqrt{1+\omega^2}}{\omega}$$
for all $k\in\N$. Therefore,
$$\limsup_{t\to+\infty}\frac{X_\lambda(t,x'_{n_0})}{t}\ge\frac{|x_{n_0}-z_{n_0}|}{\tau_{n_0}}\,\frac{\sqrt{1+\omega^2}}{\omega}>c^*$$
by~\eqref{defn0}. One has finally reached a contradiction with Lemma~\ref{lem:X/t}, and the proof of Theorem~\ref{thm:subgraph} is thereby complete.
\end{proof}


\subsection{Weaker versions of Conjecture~\ref{conj1} and counterexamples}\label{sec52}

We here prove three weaker versions of Conjecture~\ref{conj1}. Next, we exhibit some counter\-examples to the conclusions~\eqref{liminf}-\eqref{liminf2} of Theorem~\ref{thm:subgraph} when the non-coercivity assumption~\eqref{gamma<0} is not fulfilled.

The first result provides a refined upper bound for $\nabla_{\!x'}X_\lambda(x',t)$ for every sequence of times $t\to+\infty$, compared to the conclusion~\eqref{liminf} of Theorem~\ref{thm:subgraph}, at the price of taking the minimum on sets of $x'$ growing linearly in time.

\begin{proposition}\label{pro:trapped}
Under the same assumptions and with $u$ as in Theorem~$\ref{thm:subgraph}$, for any~$\lambda\in(0,1)$ and $\alpha>0$, there holds that
\Fi{trapped}
\min_{|x'|\leq\alpha  t}|\nabla_{\!x'}X_\lambda(t,x')|\ \rightarrow0\ \hbox{ as }t\to+\infty.
\Ff
\end{proposition}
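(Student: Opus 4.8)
The plan is to argue by contradiction, very much in the spirit of the proof of Theorem~\ref{thm:subgraph}, but using the global upper bound~\eqref{X<ct} of Lemma~\ref{lem:X/t} to convert a uniform lower bound on the $x'$-slope of $X_\lambda$ over a ball of radius $\alpha t$ into a contradiction. First I would fix $\lambda\in(0,1)$ and $\alpha>0$, and suppose that~\eqref{trapped} fails. Then there are a constant $\omega>0$ and a sequence of times $t_n\to+\infty$ such that
$$\min_{|x'|\le\alpha t_n}|\nabla_{\!x'}X_\lambda(t_n,x')|\ge\omega\quad\text{for all }n.$$
By continuity of $\nabla_{\!x'}X_\lambda$ (which follows, as in the proof of Theorem~\ref{thm:subgraph}, from the implicit function theorem and $\partial_{x_N}u<0$), for each $n$ there is a unit vector $e'_n\in\R^{N-1}$ — one may take $e'_n=\mathrm{e}_i$ for a suitable index $i$ depending on $n$, after replacing the Euclidean norm by the sup-norm at the cost of a dimensional factor — along which the directional derivative $\nabla_{\!x'}X_\lambda(t_n,x')\cdot e'_n$ has constant sign and absolute value $\ge\omega/\sqrt{N-1}$ throughout a large ball. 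Passing to a subsequence we may assume $e'_n$ is constant, equal to some $e'$, and the sign is constant, say $+$.

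Next I would integrate this slope bound along the segment from $x'=0$ to $x'=(\alpha t_n/2)e'$, which stays inside $\{|x'|\le\alpha t_n\}$: this gives
$$X_\lambda\!\left(t_n,\tfrac{\alpha t_n}{2}e'\right)\ \ge\ X_\lambda(t_n,0)+\omega'\,\tfrac{\alpha t_n}{2}$$
with $\omega'=\omega/\sqrt{N-1}$. Now, by~\eqref{X=ct+o} applied at $x'_0=0$, we have $X_\lambda(t_n,0)=c^*t_n+o(t_n)$, so the right-hand side is at least $(c^*+\omega'\alpha/2)t_n+o(t_n)$. On the other hand, the point $(\alpha t_n/2)e'$ lies in $\overline{B'_{\beta t_n}(0)}$ for any fixed $\beta>\alpha/2$, so the upper bound~\eqref{X<ct} of Lemma~\ref{lem:X/t} (applied with that $\beta$ in place of $\alpha$) forces $X_\lambda(t_n,(\alpha t_n/2)e')\le c^*t_n+o(t_n)$. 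Combining the two gives $(c^*+\omega'\alpha/2)t_n+o(t_n)\le c^*t_n+o(t_n)$, i.e. $\omega'\alpha/2\le o(1)$, which is absurd since $\omega,\alpha>0$. This contradiction proves~\eqref{trapped}.

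The only genuinely delicate point — and the place I would expect to spend most care — is the passage from ``$|\nabla_{\!x'}X_\lambda|\ge\omega$ everywhere on the ball'' to a \emph{single fixed} direction $e'$ along which the directional derivative keeps a constant sign over the whole ball, uniformly in the large subsequence. A priori the gradient could rotate as $x'$ varies within the ball of radius $\alpha t_n$, so one cannot simply integrate along a straight segment. The clean way around this is to note that one does not need a constant sign over the whole ball: by a pigeonhole/covering argument it suffices, for each $n$, to find one coordinate direction $\mathrm{e}_i$ and one half-line inside $\{|x'|\le\alpha t_n\}$ of length comparable to $t_n$ along which $\partial_{\mathrm{e}_i}X_\lambda$ keeps a fixed sign with magnitude $\gtrsim\omega$; since $|\nabla_{\!x'}X_\lambda|\ge\omega$ forces $|\partial_{\mathrm{e}_i}X_\lambda|\ge\omega/\sqrt{N-1}$ for at least one $i$ at each point, a continuity/connectedness argument on $\{|x'|\le\alpha t_n\}$ produces such a segment of length $\gtrsim\alpha t_n/(N-1)$ — possibly after shrinking $\alpha$ by a dimensional constant, which is harmless because $\alpha>0$ is arbitrary. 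With that segment in hand the integration and the comparison with~\eqref{X=ct+o}–\eqref{X<ct} go through exactly as above. (Alternatively, one can invoke the mean-value / Rolle-type reasoning already used in the footnote to~\eqref{jones3} together with Lemma~\ref{lem:tech} to rule out oscillations, but the direct covering argument is cleaner here.)
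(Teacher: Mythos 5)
Your overall contradiction scheme (lower bound $X_\lambda(t_n,0)=c^*t_n+o(t_n)$ at the origin from \eqref{X=ct+o}, upper bound $c^*t_n+o(t_n)$ at a far point from \eqref{X<ct}, and a linear-in-$t_n$ growth of $X_\lambda(t_n,\cdot)$ in between) is viable, but the step you yourself flag as delicate is a genuine gap, and the fix you sketch does not work. From ``$|\nabla_{\!x'}X_\lambda(t_n,\cdot)|\ge\omega$ on $\overline{B'_{\alpha t_n}}$'' you need a segment of length comparable to $t_n$ along which one \emph{fixed signed} directional derivative stays bounded below by a positive constant. The pigeonhole over the $2(N-1)$ closed sets $\{\pm\partial_{\mathrm{e}_i}X_\lambda\ge\omega/\sqrt{N-1}\}$ only tells you that one of them has large measure (or, via Baire, contains some small ball); a closed set of large measure covering part of a ball need not contain any axis-parallel segment of length $\gtrsim t_n$. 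A Fubini variant (a line meeting the good set in large one-dimensional measure) also fails, because on the complementary portion of that line $\partial_{\mathrm{e}_i}X_\lambda$ is merely known to be dominated by $|\nabla_{\!x'}X_\lambda|=|\nabla_{\!x'}u|/|\partial_{x_N}u|$, which has no a priori upper bound ($\partial_{x_N}u$ can degenerate), so the integral along the line cannot be controlled. The correct way to extract linear growth from a uniform gradient lower bound is not a straight segment but the unit-speed ascending gradient flow $\gamma'=\nabla_{\!x'}X_\lambda/|\nabla_{\!x'}X_\lambda|$ started at $0$: along it $\frac{d}{ds}X_\lambda(t_n,\gamma(s))=|\nabla_{\!x'}X_\lambda(t_n,\gamma(s))|\ge\omega$, and its first exit from $\overline{B'_{r}}$ occurs at arc length at least $r$, so $\max_{\overline{B'_{r}}}X_\lambda(t_n,\cdot)\ge X_\lambda(t_n,0)+\omega r$; with $r=\alpha t_n$ this, \eqref{X=ct+o} and \eqref{X<ct} give the contradiction. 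With this replacement your argument becomes correct.

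For comparison, the paper's proof reaches \eqref{trapped} without any path argument: for fixed $\e>0$ it considers $Y_t(x'):=X_\lambda(t,x')-\frac{\e}{t}|x'|^2$, notes from \eqref{X=ct+o} that $Y_t(0)=c^*t+o(t)$ while \eqref{X<ct} forces $Y_t\le(c^*-\e\alpha^2)t+o(t)$ on $\{|x'|=\alpha t\}$, so $Y_t$ has an interior maximum at some $\xi'_t\in B'_{\alpha t}$ for large $t$, where $|\nabla_{\!x'}X_\lambda(t,\xi'_t)|=\frac{2\e}{t}|\xi'_t|<2\alpha\e$. This perturbed-maximum (touching paraboloid) trick is strictly simpler and sidesteps the issue entirely; you may want to adopt it.
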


\begin{proof}
Take $\lambda\in(0,1)$ and $\alpha>0$. Fix $\e>0$ and, for $t>0$, define the function $Y_t:\R^{N-1}\to\R$ by 
$$Y_t(x'):=X_\lambda(t,x')-\frac{\e}{t}|x'|^2.$$
It follows, on the one hand, that $Y_t(0)=c^*t+o(t)$ as $t\to+\infty$, thanks to  Lemma~\ref{lem:X/t}. On~the other hand,~\eqref{X<ct} yields, for $|x'|=\alpha t$, $Y_t(x')\leq (c^*-\e\alpha^2)t+o(t)$  as $t\to+\infty$. This shows that, for $t$ large enough, depending on~$\alpha$ and~$\e$, $Y_t$ has a local maximum at some $\xi'_t$ with $|\xi'_t|<\alpha t$, and thus there holds that
$$|\nabla_{\!x'}X_\lambda(t,\xi'_t)|=2\frac{\e}{t}|\xi'_t|<2\alpha\e.$$
This concludes the proof by the arbitrariness of $\e$.
\end{proof}

The second weaker version of Conjecture~\ref{conj1}, which nevertheless gives a more precise conclusion than the properties~\eqref{liminf}-\eqref{liminf2} of Theorem~\ref{thm:subgraph}, is concerned with positive functions $f$ of the type~\eqref{HTconditions}.

\begin{proposition}\label{flatteningHT}
Assume that $f$ satisfies~\eqref{HTconditions} and let $u$ be a solution of~\eqref{homo} with an initial datum~$u_0$ given by~\eqref{defu0bis}, where $\gamma$ satisfies~\eqref{gamma<0}. Then, for every $\lambda\in(0,1)$, there holds that
$$\liminf_{t\to+\infty}\Big(\min_{|x'|\leq R}|\nabla_{\!x'}X_\lambda(t,x')|\Big)\ \longrightarrow\ 0\ \hbox{ as }R\to+\infty,$$
and even
\be\label{liminf2e'}
\sup_{x'_0\in\R^{N-1}}\Big[\liminf_{t\to+\infty}\Big(\min_{x'\in\overline{B'_R(x'_0)}}|\nabla_{\!x'}X_\lambda(t,x')|\Big)\Big]\ \longrightarrow\ 0\ \hbox{ as }R\to+\infty.
\ee
\end{proposition}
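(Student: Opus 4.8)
The plan is to argue by contradiction, following the scheme of the proof of Theorem~\ref{thm:subgraph} (contradiction hypothesis $\Rightarrow$ the level set $X_\lambda(\cdot,x'_0)$ would move vertically faster than $c^*$, contradicting Lemma~\ref{lem:X/t}), but to replace the ``coordinate staircase'' used there — which only controls the directional derivatives $\nabla_{x'}X_\lambda\.\mathrm{e}'_i$ along a fixed basis — by a gradient–ascent argument controlling the full gradient $\nabla_{x'}X_\lambda$. First I would record preliminaries: since \eqref{HTconditions} forces $f>0$ in $(0,1)$, both Hypotheses~\ref{hyp:invasion} and~\ref{hyp:minimalspeed} hold, and, crucially, the hair trigger effect makes Hypothesis~\ref{hyp:invasion} valid for \emph{every} $\theta\in(0,1)$ and \emph{every} $\rho>0$; thus Lemma~\ref{lem:X/t} applies and $X_\lambda(t,x')=c^*t+o(t)$ for each $x'$. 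In dimension $N=2$ one has $\min_i|\nabla_{x'}X_\lambda\.\mathrm{e}'_i|=|\nabla_{x'}X_\lambda|$ for a unit basis, so \eqref{liminf2e'} is already contained in \eqref{liminf2} of Theorem~\ref{thm:subgraph} (using any $\theta\le\lambda$); hence the new content is $N\ge3$, although the argument below works uniformly for $N\ge2$.

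Fix $\lambda\in(0,1)$ and choose $\theta\in(0,\lambda)$. By interior parabolic estimates there is $C>0$ with $\|\nabla_x u(t,\cdot)\|_{L^\infty(\R^N)}\le C$ for all $t\ge1$; set $\rho:=(\lambda-\theta)/(2C)>0$, an admissible pair for Hypothesis~\ref{hyp:invasion}. The key elementary consequence of this choice is: for every $t_0\ge1$ and every point $Q\in E_\lambda(t_0)$, since $u(t_0,Q)=\lambda$ and $\|\nabla_x u(t_0,\cdot)\|_\infty\le C$, one has $u(t_0,\cdot)\ge\theta$ on $\overline{B_\rho(Q)}$, hence $u(t_0,\cdot)\ge\theta\,\1_{B_\rho(Q)}$ in $\R^N$. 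This is the ingredient replacing the ``ball below the level set'' produced by the monotone staircase in the proof of Theorem~\ref{thm:subgraph}, and it is precisely here that the arbitrariness of $\theta,\rho$ in \eqref{HTconditions} is essential: when the gradient of $X_\lambda$ is uncontrolled in direction one can no longer fit a \emph{fixed–size} ball strictly below the (possibly very steep) graph of $X_\lambda$, but one can always place such a ball around a level–set point thanks to the uniform $C^1$ bound on $u$. I expect this step to be the real crux.

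Next, suppose \eqref{liminf2e'} fails for this $\lambda$. The left-hand quantity is nonnegative and nonincreasing in $R$, so it has a positive limit $4\omega$ as $R\to+\infty$; hence for every $R>0$ there exist $x'_R\in\R^{N-1}$ and $T_R>0$ with $|\nabla_{x'}X_\lambda(t,x')|\ge\omega$ for all $t\ge T_R$ and $x'\in\overline{B'_R(x'_R)}$. Pick $c\in\big(c^*/\sqrt{1+\omega^2},\,c^*\big)$; let $v$ solve~\eqref{homo} with datum $\theta\,\1_{B_\rho}$ and let $T_c>0$ be such that $v(\tau,\cdot)\ge\lambda$ on $\overline{B_{c\tau}}$ for $\tau\ge T_c$ (possible by~\eqref{c<c*}). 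Choose $R=\bar R$ large enough that $\tau:=\bar R\sqrt{1+\omega^2}/(\omega c)\ge T_c$, and set $x'_0:=x'_{\bar R}$, $T_0:=T_{\bar R}$. For any fixed $t_0\ge T_0$, following a steepest–ascent curve $\dot\zeta'(s)=\nabla_{x'}X_\lambda(t_0,\zeta'(s))/|\nabla_{x'}X_\lambda(t_0,\zeta'(s))|$ with $\zeta'(0)=x'_0$ (which exists by Peano's theorem, with $X_\lambda(t_0,\cdot)\in C^1$ and $\nabla_{x'}X_\lambda(t_0,\cdot)\ne0$ on the ball), the chain rule gives $\tfrac{d}{ds}X_\lambda(t_0,\zeta'(s))=|\nabla_{x'}X_\lambda(t_0,\zeta'(s))|\ge\omega$ as long as $\zeta'(s)\in\overline{B'_{\bar R}(x'_0)}$; since $X_\lambda(t_0,\cdot)$ is bounded on that compact ball, the curve must exit it at some arclength $s^*\ge\bar R$, reaching $\zeta^*\in\partial B'_{\bar R}(x'_0)$ with $X_\lambda(t_0,\zeta^*)\ge X_\lambda(t_0,x'_0)+\omega\bar R$.

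Finally I would close the loop as in Theorem~\ref{thm:subgraph}: with $Q:=(\zeta^*,X_\lambda(t_0,\zeta^*))\in E_\lambda(t_0)$ and the fact isolated above, comparison yields $u(t_0+\tau,\cdot)\ge v(\tau,\cdot-Q)\ge\lambda$ on $\overline{B_{c\tau}(Q)}$; since $\zeta^*$ lies at Euclidean distance $\bar R$ from $x'_0$ in $\{x_N=0\}$ and $c\tau>\bar R$, the point $(x'_0,\,X_\lambda(t_0,\zeta^*)+\sqrt{(c\tau)^2-\bar R^2})$ lies on $\partial B_{c\tau}(Q)$, whence $X_\lambda(t_0+\tau,x'_0)\ge X_\lambda(t_0,x'_0)+\omega\bar R+\sqrt{(c\tau)^2-\bar R^2}$. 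Because $|\nabla_{x'}X_\lambda(t,\cdot)|\ge\omega$ on $\overline{B'_{\bar R}(x'_0)}$ for \emph{all} $t\ge T_0$, this step iterates at the fixed point $x'_0$, giving $X_\lambda(T_0+k\tau,x'_0)\ge X_\lambda(T_0,x'_0)+k\big(\omega\bar R+\sqrt{(c\tau)^2-\bar R^2}\big)$ for all $k\in\N$. The value $\tau=\bar R\sqrt{1+\omega^2}/(\omega c)$ is exactly the one for which $\big(\omega\bar R+\sqrt{(c\tau)^2-\bar R^2}\big)/\tau=c\sqrt{1+\omega^2}>c^*$, so $\limsup_{t\to+\infty}X_\lambda(t,x'_0)/t>c^*$, contradicting Lemma~\ref{lem:X/t} (formula~\eqref{X=ct+o}). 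This proves \eqref{liminf2e'}, which in particular implies the first (unnumbered) assertion of the proposition. The only computations involved are the elementary optimization of $(\omega\bar R+\sqrt{(c\tau)^2-\bar R^2})/\tau$ over $\tau$ and the routine verification that $X_\lambda(t_0,\cdot)$ is $C^1$ with nonvanishing gradient on the ball.
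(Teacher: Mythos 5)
Your proof is correct, and it reaches \eqref{liminf2e'} by a route that differs in a meaningful way from the paper's. The paper does not argue by contradiction at the level of the proposition: it first invokes Lemma~\ref{lem:tech} (with $\theta=\lambda/2$ and a small $\rho$ chosen from an oscillation bound on $u(t,\cdot)$ over $2\rho$-balls, which is the same use of the hair-trigger flexibility as your choice $\rho=(\lambda-\theta)/(2C)$) to obtain, along a sequence $t_n\to+\infty$, the one-sided bound $X_\lambda(t_n,x')\le X_\lambda(t_n,x'_0)+\omega\bar R$ on $\partial B'_{\bar R}(x'_0)$, and then converts this into a small-gradient point by touching $X_\lambda(t_n,\cdot)$ from above with the paraboloid $\frac{\omega}{\bar R}|x'-x'_0|^2$: the interior maximum of $X_\lambda(t_n,\cdot)-\frac{\omega}{\bar R}|x'-x'_0|^2$ yields $|\nabla_{\!x'}X_\lambda(t_n,\xi'_n)|<2\omega$. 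You instead negate the conclusion, run a steepest-ascent curve to produce a boundary point $\zeta^*$ with $X_\lambda(t_0,\zeta^*)\ge X_\lambda(t_0,x'_0)+\omega\bar R$, and then redo inline the comparison-and-iteration mechanism that the paper has packaged into Lemma~\ref{lem:tech} (your computation $\big(\omega\bar R+\sqrt{(c\tau)^2-\bar R^2}\big)/\tau=c\sqrt{1+\omega^2}>c^*$ is exactly the geometry in that lemma's proof), contradicting Lemma~\ref{lem:X/t}. So the two proofs share the same engine — hair trigger to seat a $\rho$-ball of $\{u\ge\theta\}$ on the level set, spreading of the invading solution $v$ at speed $c^*$, and the $\sqrt{1+\omega^2}$ diagonal speed-up against \eqref{X=ct+o} — but the conversion between ``gradient bounded below on a ball'' and ``vertical displacement $\omega\bar R$ across the ball'' is done in opposite directions: your gradient-ascent step handles the contrapositive directly and makes the argument self-contained (closer in spirit to the staircase in the proof of Theorem~\ref{thm:subgraph}, with the full-gradient control removing the need for a fixed basis), whereas the paper's touching-function step extracts the small-gradient point constructively from the output of Lemma~\ref{lem:tech}, which it can then reuse elsewhere. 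Your side remarks are also accurate: the $N=2$ case is indeed already contained in Theorem~\ref{thm:subgraph} once $\theta$ can be taken below $\lambda$, and the real role of \eqref{HTconditions} is precisely the freedom to shrink $\rho$ with $\lambda-\theta$.
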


\begin{proof}
Take $\lambda\in(0,1)$ and $\omega>0$. Recall that condition~\eqref{HTconditions} ensures the validity of Hypothesis~$\ref{hyp:invasion}$ for any $\theta\in(0,1)$ and $\rho>0$, see~\cite{AW}. We take in particular $\theta=\lambda/2$ and $\rho>0$ such~that 
\Fi{oscu}
\forall\, t\geq1,\ \forall\,|x-y|\leq 2\rho,\quad|u(t,x)-u(t,y)|\le\frac\lambda2,
\Ff
which is a possible by interior parabolic estimates. Consider then the positive number~$\bar R$ given by Lemma~\ref{lem:tech}, associated with such $\theta=\lambda/2$ and $\rho>0$, and also $\lambda,\omega$. Take $x'_0\in\R^{N-1}$. Then by Lemma~\ref{lem:tech} there exists a sequence of positive numbers $(t_n)_{n\in\N}$ diverging to $+\infty$ such that, for every $n\in\N$ and every $x'\in \partial B'_{\bar R}(x'_0)$, we can find $y_n\in\R^N$ with the properties
$$|y_n-(x',X_\lambda(t_n,x'_0)+\omega\bar R)|\leq 2\rho\quad\text{and}\quad u(t_n,y_n)\leq\theta=\frac\lambda2.$$
It is not restrictive to assume that the $(t_n)_{n\in\N}$ are larger than $1$, hence we derive from~\eqref{oscu}
$$\forall\,n\in\N,\ \forall\,x'\in\partial B'_{\bar R}(x'_0),\quad u(t_n,x',X_\lambda(t_n,x'_0)+\omega\bar R)\leq\lambda,$$
that is,
\Fi{X<X+o}
\forall\,n\in\N,\ \forall\,x'\in\partial B'_{\bar R}(x'_0),\quad X_\lambda(t_n,x')\leq X_\lambda(t_n,x'_0)+\omega\bar R.
\Ff
We now deduce from this a bound on $\nabla_{\!x'}X_\lambda(t_n,\.)$ at some point. Namely, for $n\in\N$, we consider the function $Y_n:\R^{N-1}\to\R$ defined by 
$$Y_n(x'):=X_\lambda(t_n,x')-\frac{\omega}{\bar R}|x'-x'_0|^2.$$
It follows from~\eqref{X<X+o} that $Y_n(x'_0)=X_\lambda(t_n,x'_0)\geq\max_{\partial B'_{\bar R}(x'_0)} Y_n$, hence the maximum of $Y_n$ in $\ol{B'_{\bar R}(x'_0)}$ is attained at some $\xi'_n\in B'_{\bar R}(x'_0)$. We infer that
$$|\nabla_{\!x'}X_\lambda(t_n,\xi'_n)|=2\frac{\omega}{\bar R}|\xi'_n-x'_0|<2\omega.$$
In the end, we have shown that, for any~$x'_0\in\R^{N-1}$,
$$\liminf_{t\to+\infty}\Big(	\min_{x'\in\overline{B'_{\bar R}(x'_0)}}|\nabla_{\!x'}X_\lambda(t,x')|\Big)\leq2\omega,$$
hence
$$\liminf_{t\to+\infty}\Big(	\min_{x'\in\overline{B'_{R}(x'_0)}}|\nabla_{\!x'}X_\lambda(t,x')|\Big)\leq2\omega$$
for all $R\ge\bar R$. By the arbitrariness of $\omega>0$, and recalling that $\bar R$ depends on $\lambda$ and $\omega$ but not on $x'_0$, we conclude that~\eqref{liminf2e'} holds.
\end{proof}

The third weaker version of Conjecture~\ref{conj1} deals with equations~\eqref{homo} with Fisher-KPP nonlinearities $f$ of the type~\eqref{fkpp}. It asserts that, under conditions~\eqref{defu0bis},~\eqref{gamma<0}, the level curves of $u$ become locally uniformly flat along sequences of times diverging to $+\infty$.

\begin{proposition}\label{cor:flatteningKPP}
Assume that $f$ is of the strong Fisher-KPP type~\eqref{fkpp}. Let $u$ be a solution of~\eqref{homo} with an initial condition of the type~\eqref{defu0bis} with $\gamma$ satisfying~\eqref{gamma<0}. Then
\be\label{abA}
\liminf_{t\to+\infty}\,\Big(\sup_{a\le\lambda\le b,\,|x'|\le A}\big|\nabla_{\!x'}X_\lambda(t,x')\big|\Big)=0
\ee
for every $\lambda\in(0,1)$, $0<a\le b<1$ and $A>0$.
\end{proposition}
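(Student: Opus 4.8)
The plan is to argue by contradiction: from a putative failure of~\eqref{abA} I would extract a limiting entire solution $v$ of~\eqref{homo} whose initial slice genuinely depends on the variables $x'$, and then reach a contradiction by squeezing $u$ between two auxiliary solutions to which the asymptotic local one-dimensional symmetry results of~\cite{HR2} apply, which forces $v$ to be a function of $(t,x_N)$ only.

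If~\eqref{abA} failed for some $0<a\le b<1$ and $A>0$, there would be $\delta>0$, $t_n\to+\infty$, $\lambda_n\in[a,b]$ and $x'_n\in\overline{B'_A}$ with $|\nabla_{x'}X_{\lambda_n}(t_n,x'_n)|\ge\delta$; up to subsequences $\lambda_n\to\lambda_\infty\in(0,1)$ and $x'_n\to x'_\infty$. Writing $y_n:=(x'_n,X_{\lambda_n}(t_n,x'_n))$ and using interior parabolic estimates, $u(t_n+\cdot,y_n+\cdot)\to v$ in $C^{1;2}_{loc}(\R\times\R^N)$ where $v$ is an entire solution of~\eqref{homo}, $v(0,0)=\lambda_\infty$, hence $0<v<1$ and $\partial_{x_N}v\le0$ everywhere. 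Since $\partial_{x_N}v$ solves the linearized equation, either $\partial_{x_N}v<0$ everywhere — and then $\nabla_{x'}v(0,0)=-\partial_{x_N}v(0,0)\lim_n\nabla_{x'}X_{\lambda_n}(t_n,x'_n)\ne0$ by~\eqref{DchiDu} and the local boundedness of $\partial_{x_N}u$ — or $\partial_{x_N}v\equiv0$, so that $v$ is independent of $x_N$. In either case $v$ is \emph{not} a function of $(t,x_N)$ only.

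Next, since $f$ is of strong Fisher--KPP type the hair trigger effect holds, so $u$ lies between solutions $\check u\le u\le\hat u$ of~\eqref{homo} with initial supports $\check U\subset U\subset\hat U$ chosen so that~\cite{HR2} applies. For $\hat U$ I would take the subgraph of $\hat\gamma(x'):=\overline G(|x'|)$ with $\overline G$ the concave envelope of $r\mapsto\sup_{|x'|\le r}\gamma(x')$: by~\eqref{gamma<0} this supremum is bounded below and is $o(r)$, so $\overline G$ is globally Lipschitz, sublinear and $\overline G'(r)\to0$, whence $\hat\gamma$ has vanishing global mean and $\hat U$ satisfies~\eqref{ballcone} and~\eqref{dUrho}. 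As the statement is local in $x'$, for $\check U$ I would take the convex slab $\overline{B'_{A+1}}\times(-\infty,\inf_{|x'|\le A+1}\gamma]\subset U$, which also satisfies~\eqref{dUrho}. By Theorems~2.2 and~2.4 of~\cite{HR2}, $\check u$ and $\hat u$ are asymptotically locally planar, and along base points with bounded $x'$-component the elements of their $\Omega$-limit sets are either constant or planar and strictly monotone in the direction $\mathrm{e}_N$. Passing to the limit along $(t_n,y_n)$ in $\check u\le u\le\hat u$ (the $x'$-components being bounded) yields entire solutions $\check v\le v\le\hat v$, with $\check v$ and $\hat v$ depending on $x_N$ only; since $v(0,0)=\lambda_\infty\in(0,1)$, we have $\check v\not\equiv1$ and $\hat v\not\equiv0$.

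Finally, I would derive a contradiction. If $\partial_{x_N}v\equiv0$, then $v$ is independent of $x_N$ while $\check v\le v\le\hat v$ with $\check v,\hat v$ depending on $x_N$; letting $x_N\to+\infty$ in $v\le\hat v$ gives $v\le0$ (once one knows $\hat v\not\equiv1$), contradicting $v(0,0)>0$. If $\partial_{x_N}v<0$, then $\nabla_{x'}v(0,0)\ne0$, so $v$ is not a function of $(t,x_N)$; on the other hand $v$ is an entire solution trapped between $\check v$ and $\hat v$, and if these are two translates $\varphi^*(x_N-c^*t+b_1)$ and $\varphi^*(x_N-c^*t+b_2)$ of the critical planar front, then by the rigidity of that front (a sliding argument in the directions orthogonal to $\mathrm{e}_N$) $v$ is itself such a translate — again a contradiction. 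The hard part, and where I expect the real work to lie, is exactly ensuring that $\check v$ and $\hat v$ are nontrivial $\mathrm{e}_N$-fronts (rather than the constants $0$ or $1$) with $y_n$ sitting in their transition zone: this requires the logarithmic-lag bounds $\frac{3}{c^*}\log t+O(1)\le c^*t-X_\lambda(t,x')\le\frac{N+2}{c^*}\log t+O(1)$ recalled in Section~\ref{seclag1}, applied to $u$ and to the barriers, possibly a choice of $n$-dependent shifted half-space barriers, and, in the regime where $\gamma$ diverges to $-\infty$ rapidly, a direct appeal to Theorem~\ref{thm:normdelay} and Corollary~\ref{cor:DGlag}.
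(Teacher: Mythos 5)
Your overall strategy --- extract a limiting entire solution $v$ along $(t_n,y_n)$ and squeeze it between two barrier solutions whose $\Omega$-limits are planar fronts in the direction $\mathrm{e}_N$ --- has a genuine gap precisely at the point you flag as ``the hard part'', and the tools you propose there do not close it under the hypotheses of the proposition. The squeeze is only informative if $\check v$ and $\hat v$ are nontrivial fronts with $y_n$ sitting in their transition zones; but the proposition assumes only~\eqref{gamma<0}, under which the lag $c^*t-X_\lambda(t,x')$ of $u$ and the corresponding lags of your barriers are each pinned down only up to the window between $(3/c^*)\log t$ and $((N+2)/c^*)\log t$, so they may drift apart by $((N-1)/c^*)\log t\to+\infty$. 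In that case $\hat u(t_n,y_n)\to1$ and $\check u(t_n,y_n)\to0$, i.e.\ $\hat v\equiv1$ and $\check v\equiv0$, and both branches of your final contradiction collapse: the step ``letting $x_N\to+\infty$ in $v\le\hat v$ gives $v\le0$'' needs $\hat v$ to decay, and the rigidity step needs two genuine front translates. Theorem~\ref{thm:normdelay} and Corollary~\ref{cor:DGlag} require the much stronger decay~\eqref{gamma<} and cannot be invoked here, and the ``$n$-dependent shifted half-space barriers'' would have to be launched at times $t_n-s_n$ with $s_n\to+\infty$ chosen so that they both relax to fronts and remain ordered with $u$ --- none of which is set up. A second, smaller gap is the Liouville step: that an entire solution trapped between two translates of the critical KPP front $\varphi^*(x_N-c^*t)$ must itself be such a translate is a nontrivial rigidity theorem, not a routine sliding argument.

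The paper's proof avoids the lag issue entirely. It also argues by contradiction, but instead of trapping $v$ between fronts it observes that, by~\eqref{DchiDu} combined with the asymptotic one-dimensional symmetry $\nabla_{\!x'}u\to0$ of~\cite[Theorem~7.2]{HR2}, a persistent slope $|\nabla_{\!x'}X_\lambda(t,x')|\ge\delta$ at bounded $x'$ forces $\partial_{x_N}u\to0$ at some point of the level set; a compactness argument (essentially your first paragraph) then shows that $u(t,\cdot)$ is nearly equal to $\lambda\ge a'$ on a ball of arbitrarily large prescribed radius $\rho$ centered at that point. The hair-trigger effect then lifts the value above $b'>\lambda$ at a distance $\rho>c^*\tau$ further up after a fixed time $\tau$, and iterating yields a spreading speed strictly larger than $c^*$ in the direction $\mathrm{e}_N$, contradicting Lemma~\ref{lem:X/t}. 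This mechanism --- flat $x_N$-gradient on the level set implies a large plateau, which implies too-fast invasion --- is what replaces your front-trapping step, and it only uses quantities ($c^*$ and the hair trigger) that are available under~\eqref{gamma<0} alone.
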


\begin{proof}
Fix $A>0$, $0<a\le b<1$ and then any $a'$, $b'$ and $b''$ such that
$$0<a'<a\le b<b'<b''<1.$$
Let $\zeta$ be the solution of the ordinary differential equation $\dot\zeta(t)=f(\zeta(t))$ for $t\in\R$, with $\zeta(0)=a'$. Because of~\eqref{fkpp}, there is $\tau>0$ such that $\zeta(\tau)=b''$. Now, for~$\rho>0$, let $v_\rho$ denote the solution of~\eqref{homo} with initial condition
$$v_\rho(0,\cdot)=a'\1_{B_\rho}.$$
Since $f$ is Lipschitz continuous in $[0,1]$, it follows from parabolic estimates that $v_\rho(\tau,\cdot)\to b''$ as $\rho\to+\infty$, locally uniformly in $\R^N$. In particular, let us fix in the sequel a large enough real number~$\rho$ such that
\be\label{defrho2}
\rho>c^*\tau\ \hbox{ and }\ v_\rho(\tau,0)>b',
\ee
where we recall that $c^*=2\sqrt{f'(0)}$ is the minimal speed of traveling fronts connecting~$1$ to~$0$ in the Fisher-KPP case~\eqref{fkpp}.

We now claim that there exist $\e>0$ and $T>0$ such that
\be\label{claim4}\baa{l}
\forall\,t\ge T,\ \forall\,|x'|\le A,\ \forall\,\lambda\in[a,b],\vspace{3pt}\\
\displaystyle|\partial_{x_N}\!u(t,x',X_\lambda(t,x'))|\le\e\Longrightarrow a'\!<\!\min_{\overline{B_{\rho+A}(x',X_\lambda(t,x'))}}\!u(t,\cdot)\!\le\!\max_{\overline{B_{\rho+A}(x',X_\lambda(t,x'))}}\!u(t,\cdot)\!<\!b'.\eaa
\ee
Indeed, otherwise, there would exist a sequence of positive numbers $(t_n)_{n\in\N}$ diverging to~$+\infty$, a sequence $(x_n)_{n\in\N}$ in $\overline{B'_A}\times\R$ such that $\partial_{x_N}u(t_n,x_n)\to0$ as $n\to+\infty$, together with
\be\label{ab}\left\{\baa{l}
a\le u(t_n,x_n)\le b,\vspace{3pt}\\
\displaystyle\hbox{and either }\min_{\overline{B_{\rho+A}(x_n)}}u(t_n,\cdot)\!\le\!a'\!<\!a\ \hbox{ or }\ \max_{\overline{B_{\rho+A}(x_n)}}u(t_n,\cdot)\!\ge\!b'\!>\!b,\hbox{ for all }n\in\N.\eaa\right.
\ee
Up to extraction of a subsequence, the functions $(t,x)\mapsto u(t_n+t,x_n+x)$ converge in~$C^{1;2}_{loc}(\R\times\R^N)$ to a solution $u_\infty$ of~\eqref{homo} such that $0\le u_\infty\le1$ and $\partial_{x_N}u_\infty\le0$ in~$\R\times\R^N$ (remember that $\partial_{x_N}u<0$ in $(0,+\infty)\times\R^N$), while $\partial_{x_N}u_\infty(0,0)=0$. The strong parabolic maximum principle applied to the function~$\partial_{x_N}u_\infty$ then yields $\partial_{x_N}u_\infty\equiv 0$ in $(-\infty,0]\times\R^N$ and then in $\R\times\R^N$. Moreover, since the sequence $(x'_n)_{n\in\N}$ is bounded (in $\R^{N-1}$), it follows from~\cite[Theorem~7.2]{HR2}\footnote{\ The strong Fisher-KPP condition~\eqref{fkpp} is used in all results of~\cite{HR2}.} that $\nabla_{\!x'}u_\infty\equiv 0$ in $\R\times\R^N$. Finally, $\nabla u_\infty\equiv 0$ in $\R\times\R^N$ and there holds in particular $\max_{\overline{B_{\rho+A}(x_n)}}|u(t_n,\cdot)-u(t_n,x_n)|\to0$ as $n\to+\infty$, a contradiction with~\eqref{ab}. Therefore, the claim~\eqref{claim4} has been proved.

To complete the proof of Proposition~\ref{cor:flatteningKPP}, assume by way of contradiction that the conclusion~\eqref{abA} does not hold. Then, using~\eqref{DchiDu} and~\cite[Theorem~7.2]{HR2}, one gets that
$$\min_{a\le\lambda\le b,\,|x'|\le A}|\partial_{x_N}u(t,x',X_\lambda(t,x'))|\to0\ \hbox{ as }t\to+\infty.$$
Together with~\eqref{claim4}, there is then $T'>0$ such that, for every $t\ge T'$, there are $x'_t\in\overline{B'_A}$ and $\lambda_t\in[a,b]$ such that
$$a'<\min_{\overline{B_{\rho+A}(x'_t,X_{\lambda_t}(t,x'_t))}}u(t,\cdot)\le\max_{\overline{B_{\rho+A}(x'_t,X_{\lambda_t}(t,x'_t))}}u(t,\cdot)<b'.$$
Since $\overline{B_{\rho}(0,X_{\lambda_t}(t,x'_t))}\subset\overline{B_{\rho+A}(x'_t,X_{\lambda_t}(t,x'_t))}$, it then follows that
$$a'<\min_{\overline{B_\rho(0,X_{\lambda_t}(t,x'_t))}}u(t,\cdot)\le\max_{\overline{B_\rho(0,X_{\lambda_t}(t,x'_t))}}u(t,\cdot)<b'.$$
In particular, for every $t\ge T'$, one has on the one hand $X_{b'}(t,0)<X_{\lambda_t}(t,x'_t)-\rho$, and on the other hand $u(t,\cdot+(0,X_{\lambda_t}(t,x'_t)))\ge a'\1_{B_\rho}=v_\rho(0,\cdot)$ in $\R^N$. The maximum principle then yields in particular $u(t+\tau,0,X_{\lambda_t}(t,x'_t))\ge v_\rho(\tau,0)>b'$ from~\eqref{defrho2}, hence~$X_{b'}(t+\tau,0)>X_{\lambda_t}(t,x'_t)$. As a consequence,~$X_{b'}(t+\tau,0)>X_{b'}(t,0)+\rho$ for every $t\ge T'$, and thus
$$\limsup_{s\to+\infty}\frac{X_{b'}(s,0)}{s}\ge\frac{\rho}{\tau}>c^*$$
owing to~\eqref{defrho2}. This last formula is in contradiction with Lemma~\ref{lem:X/t}. As a conclusion,~\eqref{abA} has been proved.
\end{proof}

To complete this section, we present some counterexamples to the flatness properties~\eqref{liminf}-\eqref{liminf2},~\eqref{limit2} and~\eqref{limit3chi} of Theorems~\ref{thm:subgraph},~\ref{thm:conical} and Conjecture~\ref{conj1} when the assumptions~\eqref{gamma<0} or~\eqref{hypconical} are modified, and we show that~\eqref{limit2} and~\eqref{limit3chi} do not hold uniformly in general.

\begin{proposition}\label{pro46}
The following properties hold:
\begin{itemize}
\item[{\rm{(i)}}] if one assumes that $\liminf_{|x'|\to+\infty}\gamma(x')/|x'|\ge0$ instead of~\eqref{gamma<0}, the conclusions~\eqref{liminf}-\eqref{liminf2} of Theorem~$\ref{thm:subgraph}$ do not hold in general;
\item[{\rm{(ii)}}] even for $x'$-symmetric solutions $u$, the conclusion~\eqref{limit2} of Conjecture~$\ref{conj1}$ does not hold in general without the assumption~\eqref{gamma<0};
\item[{\rm{(iii)}}] even with the assumption~\eqref{gamma<0}, the conclusion~\eqref{limit2} of Conjecture~$\ref{conj1}$ does not hold in general uniformly with respect to $x'\in\R^{N-1}$;
\item[{\rm{(iv)}}] if $\ell>0$ in condition~\eqref{hypconical}, then the conclusion~\eqref{limit3chi} of Theorem~$\ref{thm:conical}$ cannot be uniform with respect to $x'\in\R^{N-1}$.
\end{itemize}
\end{proposition}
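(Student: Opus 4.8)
The plan is to produce, for each of the four assertions, an explicit subgraph $U=\{x_N\le\gamma(x')\}$ for which the large‑time shape of the level‑set graphs $x'\mapsto X_\lambda(t,x')$ is already understood. Throughout I would take $f$ bistable of type~\eqref{bistable} with $\int_0^1f>0$, so that Hypotheses~\ref{hyp:invasion} and~\ref{hyp:minimalspeed} both hold and the minimal‑speed planar front $\varphi^*$ exists (the Fisher--KPP case~\eqref{fkpp} is entirely analogous). For (i)--(ii) the input is the known convergence to conical (``$V$‑shaped'') travelling fronts; for (iii)--(iv) it is a comparison with planar fronts together with the spreading estimates of Section~\ref{sec23}.

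For (i) and (ii) I would set $\gamma(x')=m\,|x'|$ with a fixed $m>0$ — composed, in case (i), with a rotation of $\R^{N-1}$ chosen so that the limiting gradient of $X_\lambda$ avoids the coordinate hyperplanes of the given basis $(\mathrm{e}'_1,\dots,\mathrm{e}'_{N-1})$. Then $\limsup_{|x'|\to+\infty}\gamma(x')/|x'|=m>0$, so~\eqref{gamma<0} fails (as needed in (ii)), whereas $\liminf_{|x'|\to+\infty}\gamma(x')/|x'|=m\ge0$ (as needed in (i)); moreover $U=\{x_N\le m|x'|\}$ is invariant under rotations of the $x'$ variables, so $u$ is $x'$‑symmetric. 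By the stability theory of conical/$V$‑shaped fronts (\cite{HMR1,HMR2,HN,NT,RR1}), $u$ converges as $t\to+\infty$, in $C^{1;2}_{loc}$ in the frame moving with speed $c^*\sqrt{1+m^2}$ in direction $\mathrm{e}_N$, to a front whose level sets are smooth graphs $x_N=c^*\sqrt{1+m^2}\,t+h(|x'|)$, with $h$ smooth, $h'(0)=0$, $h'>0$ on $(0,+\infty)$, and $h'(r)\to m$ as $r\to+\infty$; hence $\nabla_{\!x'}X_\lambda(t,x')\to h'(|x'|)\,\hat{x'}$ uniformly on every compact subset of $\R^{N-1}$. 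For (ii) this already contradicts~\eqref{limit2}: on $\{|x'|\le A\}$ one has $\sup|\nabla_{\!x'}X_\lambda(t,\cdot)|\to\sup_{|x'|\le A}h'(|x'|)>0$ for $A$ large. For (i), given $R>0$ I would take $x'_0=A\hat v$ with $A\gg R$ and $\hat v$ a unit vector with $\hat v\cdot\mathrm{e}'_i\ne0$ for all $i$; then on $\overline{B'_R(x'_0)}$ the gradient $\nabla_{\!x'}X_\lambda(t,x')$ converges to a vector of norm $\simeq m$ in a direction $\simeq\hat v$, so $\min_{x'\in\overline{B'_R(x'_0)},\,i}|\nabla_{\!x'}X_\lambda(t,x')\cdot\mathrm{e}'_i|$ stays above a positive constant independent of $R$ for $t$ large, which contradicts~\eqref{liminf2}. (Note that~\eqref{liminf} is \emph{not} violated here, since the vertex $x'=0$, where $\nabla_{\!x'}X_\lambda\to0$, belongs to every ball $\overline{B'_R(0)}$; this is unavoidable under $\liminf\gamma/|x'|\ge0$.)

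For (iii) and (iv) I would take $\gamma(x')=-\ell\,|x'|$ with $\ell>0$, which satisfies~\eqref{gamma<0} and also~\eqref{hypconical} with that $\ell$, so \thm{conical} and the relevant instance of Conjecture~\ref{conj1} apply (for (iii) one may equally use the one‑armed $\gamma(x')=-\ell\,(x_1)_+\le0$, of slope $-\ell$ as $x_1\to+\infty$). In either case $U$ is convex, $U_\rho\ne\emptyset$ and $d_{\mc{H}}(U,U_\rho)<+\infty$, and along the ray $\R^+\mathrm{e}_1$ the boundary $\partial U$ is exactly the line $\{x_N=-\ell x_1\}$, the boundary of the half‑space $H:=\{x_N\le-\ell x_1\}\supset U$. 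Comparing $u$ from above with the planar front issued from $\1_H$ (i.e.\ $\varphi^*$ of the signed distance to $\partial H$ minus $c^*t$, up to an $O(\log t)$ shift) and from below, away from the vertex, with disks $\1_{B_\rho(\xi)}$, $\xi\in U_\rho$, that spread with speed $c^*$ (Hypothesis~\ref{hyp:invasion} and~\eqref{c<c*}; alternatively, in the KPP case, quoting~\eqref{E<U}--\eqref{E>U}), I would get, for a large fixed $K$ (large enough that the nearest point of $U$ to the point below lies on the planar piece of $\partial U$ and not near the vertex),
\[
X_\lambda\big(t,Kc^*t\,\mathrm{e}_1\big)=-\ell K c^*t+c^*\sqrt{1+\ell^2}\,t+O(\log t)\quad\text{as }t\to+\infty,
\]
and likewise with $K$ replaced by $K+1$. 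A mean‑value argument between these two points then yields $x'_t$ with $|x'_t|\to+\infty$ and $\partial_{x_1}X_\lambda(t,x'_t)=-\ell+o(1)$, so $|\nabla_{\!x'}X_\lambda(t,x'_t)|\ge\ell/2$ for $t$ large; hence the (locally uniform) convergences in~\eqref{limit2} and~\eqref{limit3chi} cannot be made uniform in $x'\in\R^{N-1}$.

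The main difficulty, common to all four items, is the lower bound on $X_\lambda$ along an ``arm'' of $\partial U$ far from the vertex: since $U$ is unbounded, no half‑space parallel to that arm is contained in $U$, so one cannot compare $u$ from below with a single planar front, and one must argue through the spreading of interior‑ball subsolutions combined with the position estimates of Section~\ref{sec23} (or, in (i)--(ii), simply quote the convergence to $V$‑shaped fronts, and in the KPP case the estimate~\eqref{E>U}), controlling the error well enough that the difference of $X_\lambda$ at two points at mutual distance $\sim t$ still records the arm slope. The matching upper bounds are, by contrast, routine: a global comparison $\1_U\le\1_H$ in (iii)--(iv), and~\eqref{E<U} or the front‑convergence statement in (i)--(ii).
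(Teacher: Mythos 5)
The genuine gap is in part (i). Your radial cone $\gamma(x')=m|x'|$ refutes only~\eqref{liminf2}, not~\eqref{liminf}: as you yourself note, the vertex stays at $x'=0$, where $\nabla_{\!x'}X_\lambda\to0$, so the minimum over $\overline{B'_R(0)}$ in~\eqref{liminf} tends to $0$. Since~\eqref{liminf2} implies~\eqref{liminf} (take $x'_0=0$), disproving only the stronger statement leaves the weaker one standing, whereas the proposition asserts that \emph{both} conclusions fail. Worse, your parenthetical claim that the failure to violate~\eqref{liminf} ``is unavoidable under $\liminf\gamma/|x'|\ge0$'' is false. The paper's counterexample (in $N=2$, bistable $f$) is a \emph{tilted} $V$: take the level curve $\gamma_{1/2}$ of a $V$-shaped front of half-angle $\beta$ and rotate it in the full $(x_1,x_2)$-plane by an angle $\vartheta\in(0,\beta)$. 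The resulting $U$ is still a subgraph, with $\liminf_{|x_1|\to+\infty}\gamma(x_1)/|x_1|>0$, and by stability of $V$-fronts the solution converges to the rotated front, whose vertex drifts to $x_1=-\infty$ with speed of order $t$. Hence on any fixed ball one eventually sees a single arm, and $\partial_{x_1}X_\lambda(t,x_1)\to1/\tan(\beta-\vartheta)>0$ locally uniformly, which kills~\eqref{liminf} and a fortiori~\eqref{liminf2}. Your ``rotation of $\R^{N-1}$'' cannot produce this effect, since it leaves the radial cone (and its vertex at the origin) invariant: the tilt must be taken in a plane containing $\mathrm{e}_N$, so that the vertex of the limiting front escapes from every compact set.

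Parts (ii), (iii), (iv) are essentially correct. For (ii) your cone-plus-conical-front argument is the paper's. For (iii)--(iv) you take a more laborious route than the paper: you locate $X_\lambda(t,\cdot)$ at two points at mutual distance $\sim t$ on an arm and invoke the mean value theorem, whereas the paper simply translates along the arm ($x_1\mapsto x_1+r$, $x_N\mapsto x_N-ar$, $r\to+\infty$), passes to the limit by parabolic estimates and uniqueness of the Cauchy problem, and obtains a planar limit solution depending only on $x_N+ax_1$; this gives $\partial_{x_1}X_\lambda(t,x_1)\to-a$ as $x_1\to+\infty$ for \emph{every fixed} $t>0$, with no quantitative spreading input at all. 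Two caveats on your version: the $O(\log t)$ precision you claim for the lower bound along the arm is not available for general bistable $f$ (only the $o(t)$ estimates of Section~2.3, or the logarithmic bounds of~\cite{U2} in the radial case, are quotable), but since your two reference points are $c^*t$ apart the $o(t)$ accuracy suffices for the difference quotient; and one must check, as you indicate, that for $K$ large the nearest point of $U$ to the reference points lies on the flat part of $\partial U$, away from the vertex, which is a routine computation.
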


\begin{proof}
(i) To see it, consider for instance a bistable function $f$ satisfying~\eqref{bistable} with
\be\label{bistable2}
f'(0)<0,\ \ f'(1)<0\ \hbox{ and }\ \int_0^1f(s)ds>0.
\ee
In that case, there is a unique up to shift decreasing function~$\varphi:\R\to(0,1)$ and a unique speed $c^*>0$ such that $\varphi(x-c^*t)$ is a traveling front connecting $1$ to $0$ for~\eqref{homo}. Hence, Hypothesis~\ref{hyp:minimalspeed} is fulfilled. Consider now~\eqref{homo} in dimension $N=2$. For any angle $\beta\in(0,\pi/2)$, it is known that there is a $V$-shaped function~$\phi:\R^2\to(0,1)$ such that
$$\phi\Big(x_1,x_2-\frac{c^*}{\sin\beta}\,t\Big)$$
is a traveling front solving~\eqref{homo}, and in addition $\phi$ is even in~$x_1$ and, for every $\lambda\in(0,1)$, there exists an even function $\gamma_\lambda\in C^1(\R)$ for which there holds
\be\label{defphi}\left\{\baa{l}
\{(x_1,x_2)\in\R^2:\phi(x_1,x_2)=\lambda\big\}=\{(x_1,x_2)\in\R^2:x_2=\gamma_\lambda(x_1)\big\},\\
\displaystyle\gamma_\lambda'(x_1)\to\pm\frac{1}{\tan\beta}\ \hbox{ as $x_1\to\pm\infty$},\vspace{3pt}\\
\phi(x_1,x_2)\to0\ \hbox{(resp. $\to1$) as $x_2\!-\!\gamma_\lambda(x_1)\!\to\!+\infty$ (resp. as $x_2\!-\!\gamma_\lambda(x_1)\!\to\!-\infty$)},\vspace{3pt}\\
\qquad\qquad\qquad\qquad\qquad\qquad\qquad\qquad\qquad\qquad\qquad\qquad\ \hbox{uniformly in $x_1\in\R$},\eaa\right.
\ee
see~\cite{HMR1,HMR2,NT}. Moreover,
$$\sup_{a\le\lambda\le b,\,x_1\in\R}\partial_{x_2}\phi(x_1,\gamma_\lambda(x_1))<0$$
for every $0<a\le b<1$, and the function $\phi$ is decreasing in every direction $(\cos\omega,\sin\omega)$ with $|\omega-\pi/2|\le\beta$. Consider now any angle $\vartheta\in(0,\beta)$, let $\mathcal{R}$ be the rotation of angle $\vartheta$, and let $u$ be the solution of~\eqref{homo} with initial condition~\eqref{defu0bis} and $\gamma$ defined by
$$\{(x_1,x_2)\in\R^2:x_2=\gamma(x_1)\}=\mathcal{R}\big(\{(x_1,x_2)\in\R^2:x_2=\gamma_{1/2}(x_1)\}\big).\footnote{\ The function $\gamma$ is well defined in $\R$ since $\phi$ is decreasing in the direction $(\cos(\pi/2-\vartheta),\sin(\pi/2-\vartheta))$.}$$
Notice in particular that~\eqref{gamma<0} is not fulfilled. Instead, one has
$$\liminf_{|x'|\to+\infty}\frac{\gamma(x')}{|x'|}>0.$$
It follows from applications of some results of~\cite{RR1} that the solution $v$ of~\eqref{homo} with initial condition $\1_{\{x_2\le\gamma_{1/2}(x_1)\}}$ satisfies
$$v(t,x_1,x_2)-\phi\Big(x_1,x_2-\frac{c^*}{\sin\beta}\,t+a\Big)\to0\ \hbox{ as~$t\to+\infty$ in $C^2(\R^2)$},$$
for some $a\in\R$. Since $u(t,x_1,x_2)=v(t,\mathcal{R}^{-1}(x_1,x_2))$ for all~$t\ge0$ and~$(x_1,x_2)\in\R^2$ and since $\gamma_\lambda'(x_1)\to\pm1/\tan\beta>0$ as $x_1\to\pm\infty$, one then infers that, for every $\lambda\in(0,1)$,
$$\partial_{x_1}X_\lambda(t,x_1)\to\frac{1}{\tan(\beta-\vartheta)}>0\ \hbox{ as $t\to+\infty$, locally uniformly in~$x_1\in\R$}.$$
In particular, properties~\eqref{liminf}-\eqref{liminf2} of Theorem~\ref{thm:subgraph} do not hold.

(ii) Consider again a function $f$ of the bistable type~\eqref{bistable} and~\eqref{bistable2} (hence, Hypo\-thesis~\ref{hyp:minimalspeed} is fulfilled), assume that $N=2$, fix $\beta\in(0,\pi/2)$ and let $\phi$ and $\gamma_\lambda$ be as in~\eqref{defphi}. Then the solution $u$ of~\eqref{homo} with initial condition~\eqref{defu0bis} defined with, say, $\gamma=\gamma_{1/2}$ (hence,~\eqref{gamma<0} is not fulfilled) is such that $u(t,x_1,x_2)-\phi(x_1,x_2-(c^*/\sin\beta)t+a)\to0$ as~$t\to+\infty$ in $C^2(\R^2)$, for some $a\in\R$. As a consequence, $\partial_{x_1}X_\lambda(t,x_1)\to\gamma'_\lambda(x_1)$ as~$t\to+\infty$, locally uniformly in $x_1\in\R$, for every $\lambda\in(0,1)$. Since $\gamma'_\lambda(x_1)\to\pm1/\tan\beta\neq0$ as $x_1\to\pm\infty$, property~\eqref{limit2} of Conjecture~\ref{conj1} does not hold for all $x'=x_1\in\R$ (although of course it holds at $x_1=0$, and even $\partial_{x_1}X_\lambda(t,0)=0$ for all $t>0$, by even symmetry in~$x_1$).

(iii)-(iv) Assuming Hypothesis~\ref{hyp:minimalspeed}, consider first equation~\eqref{homo} in dimension $N=2$, and let $\gamma:\R\to\R$ be a~$C^1(\R)$ nonpositive function (hence,~\eqref{gamma<0} is satisfied) such that~$\gamma(x_1)=-ax_1<0$ for all $x_1\ge1$, for some~$a>0$. Let $u$ be the solution of~\eqref{homo} with initial condition $u_0$ given by~\eqref{defu0bis}. From standard parabolic estimates, the functions
$$(t,x)\mapsto u(t,x_1+r,x_2-ar)$$
converge, as $r\to+\infty$, in $C^{1;2}_{loc}((0,+\infty)\times\R^2)$ to the unique solution $u_\infty$ of~\eqref{homo} such that~$u_\infty(0,x_1,x_2)=0$ if $x_2>-ax_1-b$ and $u_\infty(0,x_1,x_2)=1$ otherwise. By uniqueness,~$u_\infty$ is then a function of the variables $t$ and $x_2+ax_1$ only, that is,
$$(1,-a)\cdot\nabla u_\infty(t,x_1,x_2)=0\quad \text{for all }(t,x_1,x_2)\in(0,+\infty)\times\R^2.$$
Furthermore, $u_\infty$ is decreasing with respect to the variable $x_2+ax_1$ in $(0,+\infty)\times\R^2$ (more precisely, $(a,1)\cdot\nabla u_\infty(t,x_1,x_2)<0$ in $(0,+\infty)\times\R^2$), and, for each $t\ge0$, $u_\infty(t,x)\to1$ as~$x_2+ax_1\to-\infty$ and $u_\infty(t,x_1,x_2)\to0$ as~$x_2+ax_1\to+\infty$. Therefore, for every $t>0$ and every $\lambda\in(0,1)$, the function $X_\lambda(t,\cdot)$ defined by~\eqref{defX} is such that $X_\lambda(t,x_1)+ax_1$ has a finite limit as $x_1\to+\infty$, and also
$$\partial_{x_1}X_\lambda(t,x_1)\to-a\ \hbox{ as $x_1\to+\infty$}.$$
Finally, for any~$\lambda\in(0,1)$, $\partial_{x_1}X_\lambda(t,x')$ cannot converge to $0$ as~$t\to+\infty$ uniformly with res\-pect to~$x'\in\R$. The conclusion is the same if one just assumes that $\gamma'(x_1)\to-a<0$ as~$x_1\to+\infty$, and it also holds in higher dimensions $N\ge2$ under similar assumptions on~$\gamma$. In particular, if~$\ell>0$ in condition~\eqref{hypconical}, then the conclusion~\eqref{limit3chi} of Theorem~\ref{thm:conical} is not uniform with respect to $x'\in\R^{N-1}$.
\end{proof}


\subsection{Proof of Theorem~\ref{thm:conical}}\label{sec53}

We start with the proof of~\eqref{limit3chi} firstly under condition~\eqref{hypconical} if $N=2$, secondly under condition~\eqref{hypconical} if $N\ge3$, thirdly under the condition $\gamma(x')/|x'|\to-\infty$ as $|x'|\to+\infty$ in any dimension $N\ge2$, fourthly if $\gamma$ is nonincreasing with respect to $|x'-x'_0|$ for large $|x'|$ and for some $x'_0\in\R^{N-1}$ in any dimension $N\ge2$, and  fifthly if $\gamma$ has small derivatives with respect to $|x'-x'_0|$ as $|x'-x'_0|\to+\infty$. The main idea is to argue by way of contradiction and to compare the solution with its reflection with respect to a suitable hyperplane at time $0$ and then at all positive times from the maximum principle. We will eventually get a contradiction using the Hopf lemma at a suitable point of this hyperplane. We finally derive~\eqref{limit3u} in any dimension $N\ge2$, from~\eqref{limit3chi}. Throughout the proof, one assumes Hypothesis~\ref{hyp:invasion}.

\medskip
\noindent\underline{Step 1: property~\eqref{limit3chi} in dimension $N=2$ under condition~\eqref{hypconical}.} Assume by way of contradiction that~\eqref{limit3chi} does not hold. Then there exist a sequence $(\lambda_n)_{n\in\N}$ in~$(0,1)$, a sequence~$(t_n)_{n\in\N}$ of positive real numbers diverging to $+\infty$, and a bounded sequence~$(x'_n)_{n\in\N}$ in $\R$, such that $\sup_{n\in\N}\lambda_n<1$ and~$\inf_{n\in\N}|\partial_{x'}X_{\lambda_n}(t_n,x'_n)|>0$. Up to extraction of a subsequence and changing the variable $x'$ into $-x'$ if need be, it is not restrictive to assume that
$$\sup_{n\in\N}\,\partial_{x'}X_{\lambda_n}(t_n,x'_n)\le-2\epsilon$$
for some $\epsilon>0$. In the sequel, we denote $y$ the variable $x_2$ and set $y_n:=X_{\lambda_n}(t_n,x'_n)$ and~$\sigma_n:=\partial_{x'}X_{\lambda_n}(t_n,x'_n)<0$. Since $u(t_n,x'_n,y_n)=\lambda_n$ is away from $1$ and since~$t_n\to+\infty$ as $n\to+\infty$, it follows from Hypothesis~\ref{hyp:invasion} and from the boundedness of~$(x'_n)_{n\in\N}$ that~$y_n\to+\infty$ as $n\to+\infty$. Notice that
$$(1,\sigma_n)\cdot\nabla u(t_n,x'_n,y_n)=(1,\partial_{x'}X_{\lambda_n}(t_n,x'_n))\cdot\nabla u(t_n,x'_n,y_n)=0$$
by \eqref{DchiDu}, and denote
$$(\alpha_n,\beta_n):=\frac{\nabla u(t_n,x'_n,y_n)}{|\nabla u(t_n,x'_n,y_n)|}=\frac{(\sigma_n,-1)}{\sqrt{1+\sigma_n^2}}.$$
($\beta_n$ is negative since $\partial_{y}u<0$ in $(0,+\infty)\times\R^2$, and then $\alpha_n$ is negative too since so is $\sigma_n$). One then has $\sigma_n=-\alpha_n/\beta_n$ and
\Fi{e<}
0<\epsilon\le-\frac{1}{2}\,\sup_{n\in\N}\sigma_n=\frac{1}{2}\,\inf_{n\in\N}\frac{\alpha_n}{\beta_n}.
\Ff
		
We use now a reflection argument inspired by Jones~\cite{J}. For $n\in\N$, consider the line~$L_n$ passing through the point $(x'_n,y_n)$ and directed as $\nabla u(t_n,x'_n,y_n)$. It is the graph of the function
$$x'\mapsto\rho_n(x'):=\frac{\beta_n}{\alpha_n}(x'-x_n')+y_n=-\frac{1}{\sigma_n}(x'-x_n')+y_n.$$
Then, consider the half-plane given by its open subgraph: 
$$\O_n:=\big\{(x',y)\in\R^2:y<\rho_n(x')\big\}.$$
The vector $(1,\sigma_n)$ is then an inward normal to $\O_n$. Finally, let~$\mc{R}_n$ denote the affine orthogonal reflection with respect to $L_n$, that is,
$$\mc{R}_n(x',y)=(x',y)-2\Big[(x'-x'_n,y-y_n)\.(-\beta_n,\alpha_n)\Big](-\beta_n,\alpha_n).$$
We then define the function $v_n$ in $[0,+\infty)\times\overline{\O_n}$ by
$$v_n(t,x',y):=u(t,\mc{R}_n(x',y)).$$
We claim that, for $n$ large enough,
$$v_n(0,\.,\.)\leq u_0\ \hbox{ in $\overline{\O_n}$}.$$
To prove this, we need to check that if $(x',y)\in\overline{\O_n}$ is such that $\mc{R}_n(x',y)\in\supp u_0$, then necessarily $(x',y)\in\supp u_0$, which is equivalent to show that 
\be\label{Rn}
\mc{R}_n(\supp u_0\!\setminus\!\O_n)\subset\supp u_0.
\ee

Since $(x'_n)_{n\in\N}$ is bounded and $(y_n)_{n\in\N}$ diverges to $+\infty$, and since $\gamma$ is locally bounded, we can assume without loss of generality that, for all $n\in\N$, $(x_n',y_n)\notin\supp u_0$. We~set
$$\xi_n:=\sup\big\{x'<x'_n:\gamma(x')\geq\rho_n(x')\big\}\ \ \hbox{ and }\ \ \zeta_n:=\inf\big\{x'>x'_n:\gamma(x')\geq\rho_n(x')\big\}.$$
If the above sets are empty we define $\xi_n=-\infty$, and $\zeta_n=+\infty$, respectively. Observe that the sequence of functions $\seq{\rho}$ tends locally uniformly to $+\infty$, because $y_n\to+\infty$ and the sequences $(x'_n)_{n\in\N}$ and $(\beta_n/\alpha_n)_{n\in\N}=(-1/\sigma_n)_{n\in\N}$ are bounded. Furthermore, $\gamma$ is locally bounded, and at least continuous outside a compact interval. It follows that 
\Fi{xizeta}
\xi_n\to-\infty\ \hbox{ and }\zeta_n\to+\infty\as n\to+\infty.
\Ff
We have that $(\supp u_0\!\setminus\!\O_n)\cap\big((\xi_n,\zeta_n)\times\R\big)=\emptyset$ for all $n$ large enough, hence for all $n$ without loss of generality. By hypothesis~\eqref{hypconical}, there exists $k>\sup_{n\in\N}|x'_n|+1$ such that~$\gamma$ is of class~$C^1$ in $(-\infty,-k]\cup[k,+\infty)$, and
\Fi{gamma'}
\gamma'\geq\ell-\e\hbox{ in }(-\infty,-k]\ \hbox{ and }\ \gamma'\geq-\ell-\e\hbox{ in }[k,+\infty).
\Ff
Without loss of generality, we can assume that
$$\xi_n<-k<k<\zeta_n$$
for all $n$. We finally define 
$$\left\{\baa{l}
K_n^1:=\mc{R}_n(\supp u_0\!\setminus\!\O_n)\cap\big((-\infty,-k)\times\R\big),\vspace{3pt}\\
K_n^2:=\mc{R}_n(\supp u_0\!\setminus\!\O_n)\cap\big([-k,k]\times\R\big),\vspace{3pt}\\
K_n^3:=\mc{R}_n(\supp u_0\!\setminus\!\O_n)\cap\big((k,+\infty)\times\R\big),\eaa\right.$$
These sets are depicted in Figure~\ref{fig:triangular}. We show separately that they are contained in $\supp u_0$, for all $n$ large enough. That will provide the desired property~\eqref{Rn} for $n$ large.	
		
\begin{figure}[H]
\begin{center}
\includegraphics[width=\textwidth]{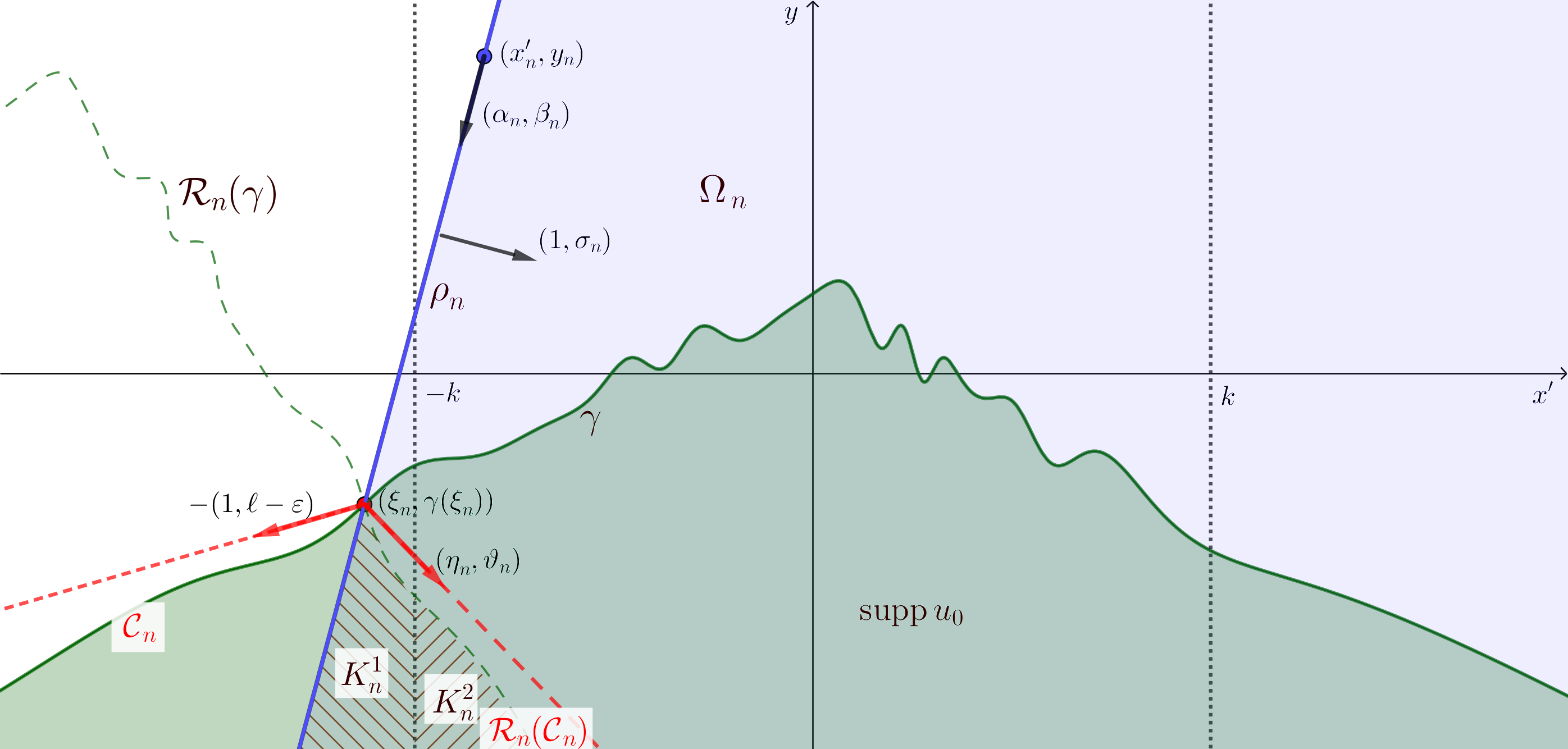}
\caption{The reflection argument, with the sets $K_n^1$ and $K_n^2$.}
\label{fig:triangular}
\end{center}
\vspace{-7pt}
\end{figure}
				
\medskip
\noindent{\em The inclusion $K_n^1\subset\supp u_0$}. Consider a point in $K_n^1$. It can be written as $(x',y)+\tau(1,\sigma_n)$ with $(x',y)\in\supp u_0\!\setminus\!\O_n$ and $0\leq\tau<-x'-k$. Notice that $x'<-\tau-k\le-k$, hence~$y\le\gamma(x')$. We write 
$$\gamma(x'+\tau)=\gamma(x')+\int_0^\tau\gamma'(x'+s)ds.$$
Conditions \eqref{e<} and \eqref{gamma'} yield $\gamma'(x'+s)\geq\ell-\e>\sigma_n$ for $x'+s\leq-k$. We eventually deduce that $\gamma(x'+\tau)\geq\gamma(x')+\sigma_n\tau\geq y+\sigma_n\tau$. Since $x'+\tau<-k$, this implies that $(x',y)+\tau(1,\sigma_n)\in\supp u_0$.
		
\medskip
\noindent{\em The inclusion $K_n^2\subset\supp u_0$ for $n$ sufficiently large}. In this case we consider a point of the type $(x',y)+\tau(1,\sigma_n)$ with $(x',y)\in\supp u_0\!\setminus\!\O_n$ and $\tau\geq0$ such that $-k\leq x'+\tau\leq k$. Since $x'\le k-\tau\le k$ and~$(\supp u_0\setminus\O_n)\cap\big((\xi_n,\zeta_n)\times\R\big)=\emptyset$ with $\xi_n<-k<k<\zeta_n$, we get that $x'\le\xi_n<-k$. Moreover, by hypothesis, there exists $M>0$ (independent of~$n$,~$x'$,~$y$ and~$\tau$) such that $\gamma(s)\leq M+\e|s|$ for all $s\in\R$. As a consequence, using~\eqref{e<}, we infer~that
$$y+\tau\sigma_n\leq M-(\e +\sigma_n)x'-\sigma_n k\leq M-\sigma_n\left(\frac{x'}2+k\right)\leq M-\sigma_n\left(\frac{\xi_n}2+k\right).$$
The latter term tends to $-\infty$ as $n\to+\infty$ by~\eqref{e<} and~\eqref{xizeta}. It follows that for $n$ large enough (independent of $x'$, $y$, $\tau$) there holds that
$$y+\tau\sigma_n<\inf_{[-k,k]}\gamma-1,$$
whence $(x',y)+\tau(1,\sigma_n)\in\supp u_0$. Therefore, $K^2_n\subset\supp u_0$ for all $n$ large enough, and even $(\supp u_0\setminus K^2_n)\cap\big([-k,k]\times\R\big)$ has non-empty interior.

\medskip
\noindent{\em The inclusion $K_n^3\subset\supp u_0$ for $n$ sufficiently large}. We recall that $\xi_n<-k<k<\zeta_n$ and $(\supp u_0\!\setminus\!\O_n)\cap\big((\xi_n,\zeta_n)\times\R\big)=\emptyset$ for all $n$. We can then divide this case in the following two subcases.
		
{\em Subcase 1:} the points in $K^3_n$ of the type $\mc{R}_n(x',y)$ with $(x',y)\in\supp u_0\!\setminus\!\O_n$ and~$x'\geq\zeta_n\ (>k)$. If $\ell>0$ then $\gamma$ is bounded from above and such points do not exist for $n$ sufficiently large, since they would satisfy $\rho_n(x')\leq y\leq\gamma(x')$, whereas the sequence of functions~$\seq{\rho}$ tends to~$+\infty$ uniformly in any half-line $[A,+\infty)$. In the case~$\ell=0$, we write~$\mc{R}_n(x',y)=(x',y)+\tau(1,\sigma_n)$ for some $\tau\geq0$. Then, because $x'\geq\zeta_n>k$, we can argue as in the case of $K_n^1$ and, by virtue of \eqref{e<} and~\eqref{gamma'},~derive
$$\gamma(x'+\tau)\geq\gamma(x')-\e\tau\geq\gamma(x')+\sigma_n\tau\geq y+\sigma_n\tau,$$
that is, $(x',y)+\tau(1,\sigma_n)\subset\supp u_0$.
		
{\em Subcase 2:} the points in $K^3_n$ of the type $\mc{R}_n(x',y)$ with $(x',y)\in\supp u_0\!\setminus\!\O_n$ and~$x'\leq\xi_n\ (<-k)$. Of course, these points exist only if $\xi_n>-\infty$. By definition of~$\xi_n$, we see that $\rho_n'(\xi_n)\geq \gamma'(\xi_n)$. Then, it follows from~\eqref{gamma'}~that
$$\frac{\beta_n}{\alpha_n}=\rho_n'(\xi_n)\geq \gamma'(\xi_n)\geq\inf_{(-\infty,\xi_n]}\gamma'\geq\ell-\e,$$
whence $(x',y)$ is contained in the cone
$$\mc{C}_n:=\big\{(\xi_n,\gamma(\xi_n))+s(-1,-(\ell-\e))+t(\alpha_n,\beta_n):s,t\geq0\big\},$$
see Figure~\ref{fig:triangular}. The point $\mc{R}_n(x',y)$ is contained in the reflected cone
$$\mc{R}_n(\mc{C}_n)=\big\{(\xi_n,\gamma(\xi_n))+s(\eta_n,\vt_n)+t(\alpha_n,\beta_n):s,t\geq0\big\},$$
where 
$$(\eta_n,\vt_n)= \t{\mc{R}}_n(-1,-(\ell-\e))=(-1,-(\ell-\e))-2\Big[(-1,-(\ell-\e))\.(-\beta_n,\alpha_n)\Big](-\beta_n,\alpha_n),$$
and $\t{\mc{R}}_n$ denotes the linear orthogonal reflection with respect to the one-dimensional subspace $\R\,(\alpha_n,\beta_n)$. We see that
$$\eta_n=-1+2\beta_n\Big[(-1,-(\ell-\e))\.(-\beta_n,\alpha_n)\Big]=1-2\alpha_n^2-2(\ell-\e)\alpha_n\beta_n\leq 1-2\alpha_n(\alpha_n-\e\beta_n),$$
which is not larger than $1$ by~\eqref{e<} and the negativity of $\alpha_n$ and $\beta_n$. If $\eta_n\leq0$ then~$\mc{R}_n(\mc{C}_n)\subset(-\infty,\xi_n]\times\R\subset(-\infty,-k)\times\R\subset(-\infty,k]\times\R$, and therefore in this case $K^3_n=\emptyset$ and we are done. Suppose that $\eta_n>0$, i.e., that
$$(-1,-(\ell-\e))\.(-\beta_n,\alpha_n)<\frac1{2\beta_n}.$$
We deduce that
$$\vt_n=-\ell+\e-2\alpha_n\Big[(-1,-(\ell-\e))\.(-\beta_n,\alpha_n)\Big]<-\ell+\e-\frac{\alpha_n}{\beta_n}\le-\ell-\e,$$
always by~\eqref{e<}. This means that $\vt_n/\eta_n\le-\ell-\e$, whence	
$$\mc{R}_n(\mc{C}_n)\subset\big\{(\xi_n,\gamma(\xi_n))+s(1,-\ell-\e)+t(\alpha_n,\beta_n):s,t\geq0\big\}.$$
It eventually follows from~\eqref{gamma'} and from the fact that $\xi_n\to-\infty$ and $\gamma(\xi_n)/\xi_n\to-\ell\le0$ as $n\to+\infty$, that $\mc{R}_n(\mc{C}_n)\cap\big((k,+\infty)\times\R\big)\subset\supp u_0$ for $n$ large enough, that is,~$\mc{R}_n(x',y)\in\supp u_0$ in this last case too.

\medskip
\noindent{\em Conclusion}. We have shown that $\mc{R}_n(\supp u_0\!\setminus\!\O_n)\subset\supp u_0$, hence $v_n(0,\.,\.)\leq u_0$ in $\ol\O_n$ for $n$ sufficiently large, and actually that $u_0-v_n(0,\.,\.)=1$ in a non-trivial ball included in~$\O_n$, because $(\supp u_0\setminus K^2_n)\cap\big([-k,k]\times\R\big)$ has non-empty interior. The function $v_n$ satisfies the same equation \eqref{homo} as $u$, and it coincides with $u$ on $\partial\O_n$. It then follows from the parabolic strong maximum principle that $v_n<u$ in $(0,+\infty)\times\O_n$, and thus, by the Hopf lemma, that
$$\nabla u(t_n,x'_n,y_n)\.(1,\sigma_n)>\nabla v_n(t_n,x'_n,y_n)\.(1,\sigma_n)=\t{\mc{R}}_n(\nabla u(t_n,x'_n,y_n))\.(1,\sigma_n).$$
We have reached a contradiction because $\nabla u(t_n,x'_n,y_n)=\t{\mc{R}}_n(\nabla u(t_n,x'_n,y_n))$ (the vector $\nabla u(t_n,x'_n,y_n)$ is indeed parallel to $(\alpha_n,\beta_n)$). As a consequence,~\eqref{limit3chi} has been proved under condition~\eqref{hypconical} in dimension $N=2$.

\medskip
\noindent{\underline{Step 2: extension to arbitrary dimension $N\ge2$.} Assume  any of the conditions~(i)-(iv) of Theorem~\ref{thm:conical} and assume by way of contradiction that~\eqref{limit3chi} does not hold. Then there exist a sequence~$(\lambda_n)_{n\in\N}$ in $(0,1)$, a sequence $(t_n)_{n\in\N}$ of positive real numbers diverging to~$+\infty$, a bounded sequence $(x'_n)_{n\in\N}$ in~$\R^{N-1}$, and a sequence $(e'_n)_{n\in\N}$ in~$\mathbb{S}^{N-2}$ (if~$N=2$, this means that~$e'_n\in\{-1,1\}$), such that $\sup_{n\in\N}\lambda_n<1$ and
\be\label{supsigman}
\sup_{n\in\N}\,\underbrace{\nabla_{\!x'}X_{\lambda_n}(t_n,x'_n)\cdot e'_n}_{=:\sigma_n}<0.
\ee
Call
$$y_n=X_{\lambda_n}(t_n,x'_n).$$
As in Step~1, one has $y_n\to+\infty$ as $n\to+\infty$. Notice that, for each $n\in\N$,
\be\label{nablaue'n}
(e'_n,\sigma_n)\cdot\nabla u(t_n,x'_n,y_n)=0
\ee
by~\eqref{DchiDu}, and call $H_n$ the affine hyperplane passing through the point $(x'_n,y_n)$ and orthogonal to $(e'_n,\sigma_n)$. This hyperplane is the graph of the function
\be\label{defrhon}
x'\mapsto\rho_n(x'):=-\frac{1}{\sigma_n}(x'-x_n')\cdot e'_n+y_n.
\ee
Then, consider the half-space given by its open subgraph: 
\be\label{defOmegan}
\O_n:=\big\{(x',x_N)\in\R^N:x_N<\rho_n(x')\big\}.
\ee
The vector $(e'_n,\sigma_n)$ is then an inward normal to $\O_n$. Finally, let~$\mc{R}_n$ denote the affine orthogonal reflection with respect to $H_n$, that is,
\be\label{defRn}
\mc{R}_n(x',x_N)=(x',x_N)-2\Big[(x'-x'_n,x_N-y_n)\.(e'_n,\sigma_n)\Big]\,\frac{(e'_n,\sigma_n)}{1+\sigma_n^2}.
\ee
We then define the function $v_n$ in $[0,+\infty)\times\overline{\O_n}$ by
\[
v_n(t,x',x_N):=u(t,\mc{R}_n(x',x_N)),
\]
and we claim that, for $n$ large enough, $v_n(0,\.,\.)\leq u_0$ in $\overline{\O_n}$ and that $u_0-v_n(0,\cdot,\cdot)=1$ in a non-trivial ball. As in Step~1, this then leads to a contradiction and complete the~proof. Namely, using the parabolic strong maximum principle one infers that $v_n<u$ in $(0,+\infty)\times\O_n$, and from the Hopf lemma that, in particular, 
\[
\nabla u(t_n,x'_n,y_n)\.(e'_n,\sigma_n)>\nabla v_n(t_n,x'_n,y_n)\.(e'_n,\sigma_n)=\t{\mc{R}}_n(\nabla u(t_n,x'_n,y_n))\.(e'_n,\sigma_n),
\]
where $\t{\mc{R}}_n$ denotes the linear orthogonal reflection with respect to the linear hyperplane orthogonal to the vector $(e'_n,\sigma_n)$. But this is impossible because the vector $\nabla u(t_n,x'_n,y_n)$ is orthogonal to $(e'_n,\sigma_n)$ by~\eqref{nablaue'n}, hence $\nabla u(t_n,x'_n,y_n)=\t{\mc{R}}_n(\nabla u(t_n,x'_n,y_n))$. 

So, to prove~\eqref{limit3chi} we just need to show that there exists $n\in\N$ such that $v_n(0,\.,\.)\leq u_0$ in $\overline{\O_n}$ and moreover $u_0-v_n(0,\cdot,\cdot)=1$ in a non-trivial ball. These conditions translate into the following ones on $\supp u_0$:
\be\label{Rnbis}
\mc{R}_n(\supp u_0\!\setminus\!\O_n)\subset\supp u_0,
\ee
and moreover $\supp u_0\setminus\mc{R}_n(\supp u_0\!\setminus\!\O_n)$ contains a non-trivial ball. Let us show that the latter property holds for $n$ sufficiently large. Observe firstly that for any non-empty compact set $K\subset\R^N$, one~has
$$\min_{(x',x_N)\in K}\,\Big(x_N-2\big[(x'-x'_n,x_N-y_n)\cdot(e'_n,\sigma_n)\big]\,\frac{\sigma_n}{1+\sigma_n^2}\Big)\to+\infty\ \hbox{as $n\to+\infty$},$$
and
\[\liminf_{n\to+\infty}\ \min_{(x',x_N)\in K}\left(\frac{\displaystyle x_N-2\big[(x'-x'_n,x_N-y_n)\cdot(e'_n,\sigma_n)\big]\,\frac{\sigma_n}{1+\sigma_n^2}}{\displaystyle\Big|x'-2\big[(x'-x'_n,x_N-y_n)\cdot(e'_n,\sigma_n)\big]\,\frac{e'_n}{1+\sigma_n^2}\Big|}\right)\ge\liminf_{n\to+\infty}|\sigma_n|>0,\]
since $y_n\to+\infty$, $\sup_{n\in\N}\sigma_n<0$ and since the sequence $(x'_n)_{n\in\N}$ is bounded and $(e'_n)_{n\in\N}$ is unitary. But $\gamma$ in~\eqref{defu0bis} is always assumed to be locally bounded, and it is easy to see that
\be\label{limsupgamma}
\limsup_{|x'|\to+\infty}\frac{\gamma(x')}{|x'|}\le0
\ee
in all cases~(i)-(iv) of Theorem~\ref{thm:conical}. Therefore, owing to the definition~\eqref{defRn} of $\mc{R}_n$, one gets that $\mc{R}_n(K)\cap\supp u_0=\emptyset$ for all $n$ large enough, that is,
\[
K\cap\mc{R}_n(\supp u_0)=\emptyset,\quad \text{for all $n$ large enough}.
\]
In particular, $\supp u_0\setminus\mc{R}_n(\supp u_0)$ contains a non-trivial ball for any $n$ large enough.

As a consequence, in order to prove~\eqref{limit3chi} we only need to show that~\eqref{Rnbis} holds for $n$ sufficiently large. Assume now by way of contradiction that this is not the case. Then, up to extraction of a subsequence, there is a sequence of points $z_n=(z'_n,\varpi_n)$ in $\R^N$ such that
$$z_n\in\supp u_0\!\setminus\!\O_n\hbox{ and }\mc{R}_n(z_n)\notin\supp u_0,\hbox{ for all~$n\in\N$}.$$
Denote
\be\label{deltan}
\delta_n:=\frac{(z'_n-x'_n,\varpi_n-y_n)\cdot(e'_n,\sigma_n)}{1+\sigma_n^2},
\ee
that is,
\be\label{Rnzn}
\mc{R}_n(z_n)=(z'_n-2\delta_ne'_n,\varpi_n-2\delta_n\sigma_n).
\ee
Since $z_n\not\in\O_n$, one has $\delta_n\le0$, and even
$$\delta_n<0$$
(since otherwise $z_n$ would lie on $H_n$ and~$\mc{R}_n(z_n)$, which does not belong to $\supp u_0$, would be equal to $z_n\in\supp u_0$). { Since $y_n\to+\infty$ as $n\to+\infty$ and~$\sup_{n\in\N}\sigma_n<0$, together with the boundedness of the sequences $(x'_n)_{n\in\N}$ and $(e'_n)_{n\in\N}$, one infers that $\rho_n(x')\to+\infty$ as~$n\to+\infty$ locally uniformly in $x'\in\R^{N-1}$. Since $z_n=(z'_n,\varpi_n)\in\supp u_0\!\setminus\!\O_n$, it then follows from the local boundedness of $\gamma$ and the definition~\eqref{defOmegan} of $\Omega_n$, that
\be\label{|z'n|}
|z'_n|\to+\infty\ \hbox{ as }n\to+\infty,
\ee
and, together with~\eqref{limsupgamma}, that
\be\label{limsupvarpin}
\limsup_{n\to+\infty}\frac{\varpi_n}{|z'_n|}\le0.
\ee

We also claim that
\be\label{claimz'n}
|z'_n-2\delta_ne'_n|\to+\infty\ \hbox{ as }n\to+\infty.
\ee
Indeed, otherwise, up to extraction of a subsequence, the sequence $(|z'_n-2\delta_ne'_n|)_{n\in\N}$ would be bounded, hence $\delta_n\to-\infty$ and $-2\delta_n\sim|z'_n|$ as $n\to+\infty$, since $|z'_n|\to+\infty$, $\delta_n<0$ and~$|e'_n|=1$. Furthermore, since the points $\mc{R}(z_n)$ given in~\eqref{Rnzn} do not belong to $\supp u_0$ and since $\gamma$ is locally bounded, the sequence $(\varpi_n-2\delta_n\sigma_n)_{n\in\N}$ would then be bounded from below, that is, there would exist $A\in\R$ such that $\varpi_n\ge2\delta_n\sigma_n+A$ for all~$n\in\N$. Finally, together with~\eqref{supsigman} and~\eqref{|z'n|}, one would have
$$\liminf_{n\to+\infty}\frac{\varpi_n}{|z'_n|}\ge\liminf_{n\to+\infty}\frac{2\delta_n\sigma_n}{|z'_n|}=-\limsup_{n\to+\infty}\sigma_n>0,$$
a contradiction with~\eqref{limsupvarpin}. As a consequence,~\eqref{claimz'n} has been proved.

Furthermore, since $\mc{R}_n(z_n)=(z'_n-2\delta_ne'_n,\varpi_n-2\delta_n\sigma_n)\not\in\supp u_0$ and $\gamma$ is at least continuous outside a compact set in all cases~(i)-(iv) of Theorem~\ref{thm:conical}, one gets from~\eqref{claimz'n} that
\be\label{varpin0}
\varpi_n-2\delta_n\sigma_n>\gamma(z'_n-2\delta_ne'_n)
\ee
for all $n$ large enough, and then for all $n$ without loss of generality. Moreover, since $z_n=(z'_n,\varpi_n)\in\supp u_0\!\setminus\!\Omega_n$, it follows from~\eqref{defOmegan} and~\eqref{|z'n|} that $\rho_n(z'_n)\le\varpi_n\le\gamma(z'_n)$ for all $n$ large enough, and then for all $n$ without loss of generality. Therefore,
\be\label{gammadeltan}
\gamma(z'_n-2\delta_ne'_n)-\gamma(z'_n)<-2\delta_n\sigma_n.
\ee

On the other hand, since the function~$\gamma$ is always locally bounded, the assumption~\eqref{hypconical} and the nonnegativity of $\ell$ then imply that $\gamma$ is here globally bounded from above. With the above notations, define, for each $n\in\N$,
$$\xi_n:=\sup\big\{x'\cdot e'_n:\gamma(x')\ge\rho_n(x')\big\}$$
(with the value $-\infty$ if the above set is empty). Since $y_n\to+\infty$ as $n\to+\infty$ and~$\sup_{n\in\N}\sigma_n<0$, together with the boundedness of the sequences $(x'_n)_{n\in\N}$ and $(e'_n)_{n\in\N}$, one infers that $\inf_{x'\cdot e'_n\ge A}\rho_n(x')\to+\infty$ for every $A\in\R$. Together with the boundedness from above of $\gamma$ and the fact that it is at least continuous (and even $C^1$) in~$\R^{N-1}\!\setminus\!B'_R$ for some $R>0$, one gets that $\xi_n\to-\infty$ as $n\to+\infty$, and then that
$$\xi_n\le-R\ \hbox{ and }\ \supp u_0\!\setminus\!\O_n\subset\big\{(x',x_N)\in\R^N:x'\cdot e'_n\le\xi_n\big\}$$
for all $n$, without loss of generality. In particular, since $z_n=(z'_n,\varpi_n)\in\supp u_0\!\setminus\!\O_n$, one has $z'_n\cdot e'_n\le\xi_n\le-R$, and
\be\label{z'ne'n}
z'_n\cdot e'_n\to-\infty\ \hbox{ as $n\to+\infty$}.
\ee

Owing to the definition~\eqref{deltan} of $\delta_n$, one also has
$$|z'_n|^2-|z'_n\!-\!2\delta_ne'_n|^2=-4\delta_n(\delta_n\!-\!z'_n\cdot e'_n)=\frac{-4\delta_n}{1\!+\!\sigma_n^2}\,\Big(\!-\sigma_n^2(z'_n\cdot e'_n)\!-\!x'_n\cdot e'_n\!+\!\sigma_n(\varpi_n\!-\!y_n)\Big).$$
Since $\sup_{n\in\N}\sigma_n<0$, since $z'_n\cdot e'_n\to-\infty$, since the sequences $(x'_n)_{n\in\N}$ and $(e'_n)_{n\in\N}$ are bounded, since the sequence $(\varpi_n)_{n\in\N}$ is bounded from above (because $\gamma$ is globally bounded from above and $z_n=(z'_n,\varpi_n)\in\supp u_0$), and since $y_n\to+\infty$, one infers that
$$-\sigma_n^2(z'_n\cdot e'_n)-x'_n\cdot e'_n+\sigma_n(\varpi_n-y_n)\to+\infty\ \hbox{ as $n\to+\infty$}.$$
Together with the negativity of $\delta_n$, one gets that $|z'_n|^2-|z'_n-2\delta_ne'_n|^2>0$ for all $n$ large enough, while $\lim_{n\to+\infty}|z'_n-2\delta_ne'_n|=+\infty$ by~\eqref{claimz'n}, hence
\be\label{z'n}
|z'_n|>|z'_n-2\delta_ne'_n|\ge R
\ee
for all $n$, without loss of generality.

Let us now complete the argument. Since $\gamma$ is here assumed to be of class $C^1$ outside~$B'_R$ and since it satisfies~\eqref{hypconical} (use here the condition on the radial gradients at large $|x'|$ and the positivity of $\eta$), there is $M>0$ such that $\big|\gamma(x')+\ell|x'|\big|\le M$ for all $|x'|\ge R$. Together with~\eqref{gammadeltan}-\eqref{z'n} and the nonnegativity of $\ell$, it follows that
$$-2\delta_n\sigma_n>-\ell|z'_n-2\delta_ne'_n|-M+\ell|z'_n|-M\ge-2M$$
for all $n$. But $\delta_n<0$ and $\sup_{n\in\N}\sigma_n<0$. Thus, the sequence $(\delta_n)_{n\in\N}$ is bounded. Together with~\eqref{|z'n|}, that implies that $|z'_n-2s\delta_ne'_n|\ge R$ for all $s\in[0,1]$ and for all $n\in\N$, without loss of generality. Dividing~\eqref{gammadeltan} by $-2\delta_n>0$ and using the $C^1$ smoothness of $\gamma$ outside~$B'_R$, one then gets the existence of a sequence $(\vartheta_n)_{n\in\N}$ in $(0,1)$ such that
\be\label{varthetan}
\nabla\gamma(z'_n-2\vartheta_n\delta_ne'_n)\cdot e'_n<\sigma_n
\ee
for all $n\in\N$. Since the sequences $(\vartheta_n)_{n\in\N}$, $(\delta_n)_{n\in\N}$ and $(e'_n)_{n\in\N}$ are bounded, one then infers from~\eqref{hypconical} and~\eqref{|z'n|} that
$$\nabla\gamma(z'_n-2\vartheta_n\delta_ne'_n)\cdot e'_n=-\ell\,\frac{z'_n\cdot e'_n}{|z'_n-2\vartheta_n\delta_ne'_n|}+o(1)\ \hbox{ as }n\to+\infty,$$
hence $\liminf_{n\to+\infty}\nabla\gamma(z'_n-2\vartheta_n\delta_ne'_n)\cdot e'_n\ge0$ from~\eqref{z'ne'n} and the nonnegativity of $\ell$. But this last formula contradicts~\eqref{supsigman} and~\eqref{varthetan}.

One has then reached a contradiction, implying that the desired property~\eqref{Rnbis} holds for all $n$ large enough. As explained above, this yields in turn property~\eqref{limit3chi}.

\medskip
\noindent{\underline{Step 3: property~\eqref{limit3chi} for any $N\ge2$ if $\gamma(x')/|x'|\to-\infty$ as $|x'|\to+\infty$.}} In this case, pro\-perty~\eqref{Rnbis} can be directly checked without arguing by contradiction. Indeed, since $\sup_{n\in\N}\sigma_n<0$ and $y_n\to+\infty$, it then easily follows that $\supp u_0\subset\Omega_n$ for all $n$ large enough, hence~\eqref{Rnbis} is automatically satisfied simply because $\mc{R}_n(\supp u_0\setminus\O_n)=\emptyset$.

\medskip
\noindent{\underline{Step 4: property~\eqref{limit3chi} for any $N\!\ge\!2$ if $\!\gamma$ is nonincreasing in $\!|x'\!-\!x'_0|$.}} More precisely, let us assume here that there are $x'_0\in\R^{N-1}$ and a continuous nonincreasing function $\Gamma:\R^+\to\R$ such that $\gamma(x')=\Gamma(|x'-x'_0|)$ for all $x'$ outside a compact set. Since the desired conclusion~\eqref{limit3chi} is invariant by translation with respect to the first $N\!-\!1$ variables of $\R^N$, one can assume without loss of generality that $x'_0=0$ and that $\gamma$ is continuous and nonincreasing with respect to $|x'|$ for $|x'|$ large enough. Since $\gamma$ is locally bounded, it is then globally bounded from above. By using the same notations and repeating the same arguments as in Steps~2 above until~\eqref{z'n} (as far as $\gamma$ is concerned, the arguments until~\eqref{z'n} only use the boundedness of $\gamma$ from above), one gets~\eqref{gammadeltan}-\eqref{z'n}. But both $|z'_n|$ and $|z'_n-2\delta_ne'_n|$ converge to $+\infty$ as $n\to+\infty$ by~\eqref{|z'n|} and~\eqref{claimz'n}, and~$\gamma$ is nonincreasing with respect to~$|x'|$ outside a compact set. Therefore,~\eqref{z'n} implies that~$\gamma(z'_n-2\delta_ne'_n)-\gamma(z'_n)\ge0$ for all $n$ large enough, contradicting~\eqref{gammadeltan} since both $\delta_n$ and $\sigma_n$ are negative. This means that~\eqref{Rnbis} necessarily holds.

\medskip
\noindent{\underline{Step 5: property~\eqref{limit3chi} for any $N\!\ge\!2$ if $\!\gamma=\Gamma(|\cdot-x'_0|)$ with $\Gamma'(+\infty)=0$.}} More precisely, let us assume here that there are $x'_0\in\R^{N-1}$ and a $C^1$ function $\Gamma:\R^+\to\R$ such that~$\Gamma'(r)\to0$ as $r\to+\infty$ and $\gamma(x')=\Gamma(|x'-x'_0|)$ for all $x'$ outside a compact set. As in Step~4, one can assume without loss of generality that $x'_0=0$. With the same notations as in Step~2, both $|z'_n|$ and $|z'_n-2\delta_ne'_n|$ converge to $+\infty$ as $n\to+\infty$ by~\eqref{|z'n|} and~\eqref{claimz'n}. Therefore, for every $\varepsilon>0$, there holds
$$|\gamma(z'_n-2\delta_ne'_n)-\gamma(z'_n)|=\big|\Gamma(|z'_n-2\delta_ne'_n|)-\Gamma(|z'_n|)\big|\le\varepsilon\,\big||z'_n-2\delta_ne'_n|-|z'_n|\big|\le2\varepsilon|\delta_n|$$
for all $n$ large enough (remember that $|e'_n|=1$). Since $\varepsilon>0$ can be arbitrarily small and since $\delta_n<0$ 
for all $n$ and $\sup_{n\in\N}\sigma_n<0$ by~\eqref{supsigman}, the above formula contradicts~\eqref{gammadeltan} and therefore proves~\eqref{Rnbis}. 

\medskip
\noindent{\underline{Step 6: proof of property~\eqref{limit3u}.}} We assume in this last step any of the assumptions~(i)-(iv) of Theorem~\ref{thm:conical}. Consider any bounded sequence $(x'_n)_{n\in\N}$ of $\R^{N-1}$, any sequence $(t_n)_{n\in\N}$ of positive real numbers diverging to $+\infty$, and any sequence $(y_n)_{n\in\N}$ in~$\R$. Two cases may occur, up to extraction of a subsequence.

On the one hand, if $\limsup_{n\to+\infty}u(t_n,x'_n,y_n)<1$, then $|\nabla_{\!x'}X_{u(t_n,x'_n,y_n)}(t_n,x'_n)|\to0$ as~$n\to+\infty$ from~\eqref{limit3chi}, hence
$$|\nabla_{\!x'}u(t_n,x'_n,y_n)|=|\partial_{x_N}u(t_n,x'_n,y_n)|\,|\nabla_{\!x'}X_{u(t_n,x'_n,y_n)}(t_n,x'_n)|\to0\ \hbox{ as $n\to+\infty$}$$
from the boundedness of $\partial_{x_N}u$ in $[1,+\infty)\times\R^N$.

On the other hand, if $u(t_n,x'_n,y_n)\to1$ as $n\to+\infty$, then, up to extraction of a subsequence, the functions $u_n:(t,x',x_N)\mapsto u(t+t_n,x'+x'_n,x_N+y_n)$ converge in $C^{1;2}_{loc}(\R\times\R^N)$ to a classical solution $u_\infty$ of $\partial_tu_\infty=\Delta u_\infty+f(u_\infty)$ in $\R\times\R^N$, with $0\le u_\infty\le1$ in~$\R\times\R^N$, and $u_\infty(0,0,0)=1$. The strong parabolic maximum principle and the uniqueness of the bounded solutions of the Cauchy problem~\eqref{homo} imply that $u_\infty\equiv1$ in $\R\times\R^N$. In particular, $|\nabla_{\!x'}u(t_n,x'_n,y_n)|=|\nabla_{\!x'}u_n(0,0,0)|\to|\nabla_{\!x'}u_\infty(0,0,0)|=0$ as $n\to+\infty$. Since the limit (namely, $0$) does not depend on the subsequence, one concludes that the whole sequence~$(|\nabla_{\!x'}u(t_n,x'_n,y_n)|)_{n\in\N}$ converges to $0$ as $n\to+\infty$.

The previous paragraphs provide property~\eqref{limit3u} under any of the assumptions~(i)-(iv) and the proof of Theorem~\ref{thm:conical} is thereby complete.\hfill$\Box$


\subsection{Proof of Proposition~\ref{pro:asymp}}\label{sec54}

Let $N=2$. Consider a function $f$ such that Hypothesis~\ref{hyp:minimalspeed} is satisfied (hence, Hypothesis~\ref{hyp:invasion} as well), and let $\theta\in(0,1)$ be given by Hypothesis~\ref{hyp:invasion}. Let us call for short~$y$ the variable $x_2$. We consider a function $\gamma$ defined for $|x'|>1$ by
$$\gamma(x')=\sqrt{|x'|}\sin(\sqrt{|x'|}),$$
and extended in a smooth way to the whole $\R$. For $x'>1$, we compute
$$\gamma'(x')=\frac1{2\sqrt{x'}}\sin(\sqrt{x'})+\frac12\cos(\sqrt{x'}).$$
The function $\gamma$ then fulfills condition~\eqref{gamma0} but not~\eqref{gamma'to0}. Moreover, $\gamma'(x'+4\pi^2n^2)\to1/2$ as~$n\to+\infty$, locally uniformly in $x'\in\R$. As a consequence, $u_0(\.+4\pi^2n^2,\.)\to H(2y-x')$ as~$n\to+\infty$ in $L^p_{loc}(\R^2)$, for any $p\geq1$, where~$H$ is the Heaviside function:
$$H(s)=\begin{cases} 1 & \text{if }s\leq0,\\
0 & \text{if }s>0.
\end{cases}$$
Then, by parabolic estimates, $u(t,x'+4\pi^2n^2,y)$ converges as $n\to+\infty$ (up to subsequences) in $C^{1;2}_{loc}((0,+\infty)\times\R^2)$, to the solution $v$ of \eqref{homo} with initial datum $H(2y-x')$. By uniqueness, the function $v$ is of the form $v(t,x',y)=w(t,2y-x')$. Moreover, as for the $x_N$-monotonicity of $u$ with initial conditions satisfying~\eqref{defu0bis}, the comparison principle shows that $w(t,z)$ is nonincreasing with respect to $z$, and the strong maximum principle applied to $\partial_{z} w$ implies that $\partial_{z} w<0$ in $(0,+\infty)\times\R^2$.

Fix now any $\lambda\in(0,1)$, and consider an arbitrary $t>0$. Let $z_t\in\R$ be such that~$w(t,z_t)=\lambda$ (as in~\eqref{defX}, such $z_t$ exists and is unique because the function~$w(t,\cdot)$ is conti\-nuous and decreasing, and $w(t,-\infty)=1$ and~$w(t,+\infty)=0$). We see that~$v(t,0,z_t/2)=\lambda$ and
$$\frac{\partial_{1} v(t,0,z_t/2)}{\partial_{2} v(t,0,z_t/2)}=\frac{-\partial_{z} w(t,z_t)}{2\,\partial_{z} w(t,z_t)}=-\frac12.$$
As a consequence, there holds from one hand that
$$\lim_{n\to+\infty}u(t,4\pi^2n^2,z_t/2)=\lambda,$$
and from the other hand that
$$\lim_{n\to+\infty}\frac{\partial_{1} u(t,4\pi^2n^2,z_t/2)}{\partial_{2} u(t,4\pi^2n^2,z_t/2)}=-\frac12.$$
Hence, owing to~\eqref{DchiDu}, one has $\partial_{x'}X_{u(t,4\pi^2n^2,z_t/2)}(t,4\pi^2n^2)\to1/2$ as~$n\to+\infty$. Therefore, for every $\lambda_0\in(\lambda,1)$ and every $t>0$, one has
$$\sup_{0<\lambda'\le\lambda_0,\,x'\in\R}|\partial_{x'}X_{\lambda'}(t,x')|\ge\frac12.$$
This shows that $u$ violates the conclusion~\eqref{Dchi0} of Conjecture \ref{conj:order1}.

Moreover, since Hypothesis~\ref{hyp:invasion} holds, \cite[Theorem~1.11]{DGM} implies that the functions $w(t,z_t+\cdot)$ converge as $t\to+\infty$ in $C^2_{loc}(\R)$ to the profile of a decreasing or constant solution connecting some values $a$ to $b$ with $1\ge a\ge\lambda\ge b\ge0$, and belonging to the minimal propagating terrace solution to~\eqref{homo} connecting $1$ to $0$. But this minimal propagating terrace reduces here to a single decreasing traveling front owing to Hypothesis~\ref{hyp:minimalspeed}. It follows in particular that  $\lim_{t\to+\infty}-\partial_zw(t,z_t)>0$. Since
$$\lim_{n\to+\infty}\partial_{1} u(t,4\pi^2n^2,z_t/2)=-\partial_z w(t,z_t)$$
for every $t>0$, conclusion~\eqref{Du0} fails too.
\hfill$\Box$

	
\section{Logarithmic lag: proofs of Theorems~\ref{thlag},~\ref{thm:normdelay}, Proposition~\ref{pro:lag}, and Corollary~\ref{cor:DGlag}}\label{sec6}

This section is devoted to the proofs of Theorems~\ref{thlag},~\ref{thm:normdelay}, Proposition~\ref{pro:lag}, and Corollary~\ref{cor:DGlag} on the lag behind the front in the Fisher-KPP case, and on further asymptotic one-dimensional symmetry results in the direction $\mathrm{e}_N$, when the initial conditions $u_0$ are of the type~\eqref{defu0bis}, with $\gamma(x')$ going to $-\infty$ suitably fast as $|x'|\to+\infty$.

	
\subsection{Proof of Theorem~\ref{thlag}}\label{sec61}

We start with deriving property~\eqref{E<U}. Consider the linear equation
\begin{equation}\label{linearised}
\partial_tw=\Delta w+f'(0) w,\quad t>0,\ x\in\R^N,
\end{equation}
with initial datum $w(0,\.)=\1_{\R^N\setminus B_r}$, for given $r>0$. The solution $w$ can be explicitly computed through the heat kernel. One has
$$w(t,0)=\frac{e^{f'(0) t}}{(4\pi t)^{N/2}}\int_{\R^N\setminus B_r}e^{-\frac{|y|^2}{4t}}dy=\frac{e^{f'(0) t}}{(4\pi t)^{N/2}}\times N|B_1|\int_{r}^{+\infty}\rho^{N-1}e^{-\frac{\rho^2}{4t}}d\rho$$
for all $t>0$, where $|B_1|$ denotes the $N$-dimensional Lebesgue measure of the unit ball $B_1$. In order to estimate the latter integral we integrate by parts and write
$$\int_{r}^{+\infty} \rho^{N-1}e^{-\frac{\rho^2}{4t}}d\rho=2te^{-\frac{r^2}{4t}}r^{N-2}+2(N-2)\int_{r}^{+\infty} t\rho^{N-3}e^{-\frac{\rho^2}{4t}}d\rho.$$
In order to estimate the latter term, call
$$R_0:=\frac{4(N-2)}{c^*}=\frac{2(N-2)}{\sqrt{f'(0)}}\ge0,$$
which is a quantity only depending on $N$ and $f'(0)$. Then, for any $t>0$ and~$R\ge R_0$, if $r\ge c^*t+R$, we~get
$$\forall\,\rho\geq r,\quad 2(N-2)t\rho^{N-3}\leq \frac12c^* t\rho^{N-2}\leq \frac12\rho^{N-1}.$$
One then deduces that, for $t>0$, $R\geq R_0$ and $r\geq c^*t+R$, 
$$\int_{r}^{+\infty} \rho^{N-1}e^{-\frac{\rho^2}{4t}}d\rho\leq 4te^{-\frac{r^2}{4t}}r^{N-2},$$
and therefore
$$w(t,0)\leq\frac{4N|B_1|}{(4\pi)^{N/2}}\,t^{(2-N)/2}\,e^{f'(0) t-\frac{r^2}{4t}}\,r^{N-2}.$$
We now take
$$r=c^*t+k\log t+R=2\sqrt{f'(0)}\,t+k\log t+R,$$
with $k\ge0$ and $R\in[R_0,+\infty)$ to be chosen. We then have that, for $t\geq 1$,
\[\begin{split}
w(t,0) &\leq
\frac{4N|B_1|}{(4\pi)^{N/2}}\,t^{(2-N)/2}\,e^{-k\sqrt{f'(0)} \log t-R\sqrt{f'(0)}}\,(2\sqrt{f'(0)}\,t+k\log t+R)^{N-2}\\
&\leq \frac{4N|B_1|(2\sqrt{f'(0)}+k+R)^{N-2}}{(4\pi)^{N/2}}\,t^{(N-2)/2-k\sqrt{f'(0)}}\,e^{-R\sqrt{f'(0)}}.
\end{split}\]
Choosing $k=(N-2)/(2\sqrt{f'(0)})=(N-2)/c^*$, we eventually infer that, for $t\geq 1$,
$$w(t,0)\leq\frac{4N|B_1|}{(4\pi)^{N/2}}\times\Big(2\sqrt{f'(0)}+\frac{N-2}{2\sqrt{f'(0)}}+R\Big)^{N-2}e^{-R\sqrt{f'(0)}}.$$
For any given $\lambda\in(0,1)$, we can then choose a positive real number $R\in[R_0,+\infty)$ large enough (depending only on $R_0$, $f'(0)$, $N$ and $\lambda$, hence on $f'(0)$, $N$ and $\lambda$) such that the above right-hand side (which is independent of $t$) is smaller than $\lambda$. Namely, with this choice of~$R$, the solution of~\eqref{linearised} with initial datum $w(0,\.)=\1_{\R^N\setminus B_r}$, with
$$r=c^*t+\frac{N-2}{c^*}\log t+R$$
and any $t\ge1$ satisfies $w(t,0)\leq\lambda$. 

We now use this function $w$ in order to estimate the solution~$u$ of~\eqref{homo}. We will show~\eqref{E<U} with the above choice of $R$, that is, $\R^N\setminus U+B_{c^*t+\frac{N-2}{c^*}\log t+R}\ \subset\ \R^N\setminus F_\lambda(t)$ for every $t\ge1$. Fix then an arbitrary $t\ge1$. Consider a point $x_t$, if any, in $\R^N\setminus U+B_{c^*t+\frac{N-2}{c^*}\log t+R}$. This means that $u_0=0$ in~$B_{c^*t+\frac{N-2}{c^*}\log t+R}(x_t)$ and thus $u_0(x+x_t)\leq w(0,x)$ for $x\in\R^N$, with the above solution $w(s,x)$ of the linear heat equation $\partial_sw=\Delta w+f'(0)w$ and initial condition $w(0,\cdot)=\1_{\R^N\setminus B_{c^*t+\frac{N-2}{c^*}\log t+R}}$. Since $w$ is a supersolution to~\eqref{homo} due to the Fisher-KPP hypothesis~\eqref{kpp0}, we infer that $u(s,x+x_t)\leq w(s,x)$ for all $s\geq0$ and $x\in\R^N$, whence in particular
$$u(t,x_t)\leq w(t,0)\leq\lambda,$$
that is, $x_t\notin F_\lambda(t)$. This shows~\eqref{E<U}.

Let us turn to property~\eqref{E>U}, under the assumption that $U_\rho\neq\emptyset$ satisfies~\eqref{dUrho}. Let~$v$ be the solution to~\eqref{homo} with initial datum $v(0,\.)=\1_{B_\rho}$. We know from~\cite{D,G,RRR} that, for given $\lambda\in(0,1)$, there exists $R>0$ such that
\begin{equation}\label{v>}
B_{c^*t-\frac{N+2}{c^*}\log t}\subset \big\{x\in\R^N:v(t,x)>\lambda\big\} + B_R \quad\text{for all }t\geq1.
\end{equation} 
By the definition of $U_\rho$ one has that $u_0\geq\1_{B_\rho(x_0)}$ for any $x_0\in U_\rho$. Owing to the parabolic comparison principle we then deduce $u(t,x)\geq v(t,x-x_0)$ for all $t\geq0$, $x\in\R^N$, and thus, by~\eqref{v>},
$$B_{c^*t-\frac{N+2}{c^*}\log t}(x_0)\subset \big\{x\in\R^N:u(t,x)>\lambda\big\} + B_R \quad\text{for all }t\geq1,$$ 
This means that $U_\rho+B_{c^*t-\frac{N+2}{c^*}\log t} \subset F_\lambda(t)+B_R  $ for all $t\geq1$, from which~\eqref{E>U} follows due to~\eqref{dUrho}, even if it means increasing $R$.
\hfill$\Box$

	
\subsection{Proofs of Theorem~\ref{thm:normdelay}, Proposition~\ref{pro:lag}, and Corollary~\ref{cor:DGlag}}\label{sec62}

\begin{proof}[Proof of Theorem~$\ref{thm:normdelay}$] 
We recall that here, being $f$ of the strong Fisher-KPP type~\eqref{fkpp}, Hypotheses~\ref{hyp:invasion} and~\ref{hyp:minimalspeed} are satisfied and $c^*=2\sqrt{f'(0)}$ is the minimal speed of traveling fronts connecting $1$ to $0$. We further have that $u_0$ satisfies~\eqref{defu0bis},~\eqref{gamma<}. By the monotonicity of the functions~$X_\lambda$ with respect to $\lambda$, it is sufficient to show that the limit~\eqref{lag1} holds for any given $\lambda\in(0,1)$, which is fixed throughout the proof.

The fact that the right-hand side of~\eqref{lag1} provides a lower bound for $X_\lambda(t,x')$ is a straightforward consequence of the results about the logarithmic lag for compactly supported initial data, see~\cite{D,G,RRR}. Indeed, we know from these works that the solution~$\ul u$ to~\eqref{homo} emerging from a continuous, compactly supported, radially symmetric and non-trivial initial datum $\ul u_0$ such that $0\leq\ul u_0\leq u_0\le 1$ in $\R^N$, satisfies the following property: there exists $\sigma\in\R$ such that, for any $K>0$, there is $T_K>0$ for which there holds 
$$\forall\,t\geq T_K,\ \forall\,|x'|\leq K, \quad\ul u\Big(t,x',c^*t-\frac{N+2}{c^*}\,\log t+\sigma\Big)>\lambda.$$
Since $1\ge u\geq\ul u\ge 0$ in $[0,+\infty)\times\R^N$ by the parabolic comparison principle, we find that
\Fi{Xl>}
X_\lambda(t,x')\geq c^*t-\frac{N+2}{c^*}\log t+\sigma+o(1)\as t\to+\infty,
\Ff
locally uniformly in $x'\in\R^{N-1}$.
	
In order to show the upper bound, we will construct a supersolution $v$ larger than~$u$ at time $0$, for which we are able to explicitly compute the lag. First of all, owing to~\eqref{gamma<}, we can take $\beta<0$ satisfying
$$\limsup_{|x'|\to+\infty}\,\frac{\gamma(x')}{\log(|x'|)}<\beta<-\frac{2(N-1)}{c^*}.$$
We then take $M>0$ large enough so that
$$\forall\,x'\in\R^{N-1},\quad\gamma(x')\le\beta\log(1+|x'|)+M.$$
Hence, by the parabolic comparison principle, if we show the desired upper bound for $X_\lambda$ when $\gamma(x')$ is replaced by $\beta\log(1\!+\!|x'|)\!+\!M$, we are done. Up to a translation of the coordinate system, we can further assume that $M=0$. We then assume from now on~that
$$\gamma(x')=\beta\log(1+|x'|).$$
In particular, since the above function $\gamma$ is globally Lipschitz continuous, we can find a radius $\delta>0$ large enough, depending on $N$ and the Lipschitz constant of $\gamma$, such that 
\be\label{cupcup}
\big\{(x',x_N)\in\R^{N-1}\times\R : x_N\leq\gamma(x')\big\}\,\subset\,\bigcup_{k\in\Z^{N-1}}\,\bigcup_{h\in\N}B_\delta(k,\gamma(k)-h)
\ee
(we denote $\N$ the set if integers, including $0$). We then consider the solution $0\le w\le 1$ of~\eqref{homo} emerging from a $C^\infty$ compactly supported and radially symmetric initial datum~$w_0$ such that
$$\1_{{B_{\delta}}}\leq w_0\leq 1\ \hbox{ in $\R^N$},$$
and we define a nonnegative function $v$ in $[0,+\infty)\times\R^N$ by
\be\label{eqsum}
v(t,x)=v(t,x',x_N):=\sum_{(k,h)\in\Z^{N-1}\times\N}w(t,x'-k,x_N-\gamma(k)+h).
\ee
From Gaussian estimates, for any $T>0$, there are some positive constants $\alpha_T$ and $C_T$ such that $0\le w(t,x)\le C_Te^{-\alpha_T|x|^2}$ and $|\partial_tw(t,x)|+|\partial_{x_i}w(t,x)|+|\partial_{x_ix_j}w(t,x)|\le C_Te^{-\alpha_T|x|^2}$ for all $(t,x)\in[0,T]\times\R^N$ and $1\le i,j\le N$. Therefore, the function $v$ is well defined in $[0,+\infty)\times\R^N$ and of class~$C^{1;2}_{t;x}([0,+\infty)\times\R^N)$. Furthermore, since the function $f$ is at least Lipschitz continuous in $[0,1]$ and satisfies~\eqref{fkpp}, then, for any series $\sum a_j$ of nonnegative real numbers such that $\sum_{j\in\N}a_j\le1$, the series $\sum f(a_j)$ converges and $f(\sum_{j\in\N}a_j)\le\sum_{j\in\N}f(a_j)$. It then follows that $\min(v,1)$ is a (generalized) supersolution of~\eqref{homo} in $[0,+\infty)\times\R^N$. For any $(x',x_N)\in B_\delta(k,\gamma(k)-h)$, with $k\in\Z^{N-1}$ and $h\in\N\cup\{0\}$, there holds that $v(0,x',x_N)\geq w_0(x'-k,x_N-\gamma(k)+h)=1$, whence $v(0,\.)\geq u_0$ due to~\eqref{cupcup}. The parabolic comparison principle then implies that
\be\label{inequv1}
0\le u\leq\min(v,1)\ \hbox{ in $[0,+\infty)\times\R^N$}.
\ee

Let us estimate the position of the level sets of $v$. Let $\varphi^*$ denote the profile~of~a traveling front connecting $1$ to $0$ with minimal speed~$c^*=2\sqrt{f'(0)}$. We know that $\varphi^*:\R\to\R$ is a decreasing function satisfying 
$$\lim_{r\to+\infty}\,\frac{\varphi^*(r)}{r\,e^{-c^*r/2}}\in(0,+\infty),$$
hence there exists a constant $A>0$ such that 
\be\label{varphi*}
\varphi^*(r)\leq A\,r\,e^{-c^*r/2}\ \hbox{ for all $r\geq1$}.
\ee
It is also known~\cite{D,G,RRR}~that 
$$w(t,x)-\varphi^*\big(|x|-\rho(t)\big)\to0\ \text{ as }t\to+\infty,$$
uniformly with respect to $x\in\R^N$, where $\rho(t)$ satisfies
\be\label{def:rhot}
\rho(t)=c^*t-\frac{N+2}{c^*}\,\log t+O(1)\as t\to+\infty.
\ee
Moreover, by \cite[Lemma 3.5]{D}, there is a positive constant $C>0$ such that
\be\label{x<sqrtt}
w(t,x)\leq C\,\varphi^*(|x|-\rho(t))\quad \text{for all $t\ge1$ and $|x|-\rho(t)\leq\sqrt{t}$},
\ee
and, by the proof of \cite[Lemma 3.2]{D}, there is another constant $b>0$ such that
\be\label{x>sqrtt}
w(t,x)\leq C\,(|x|-\rho(t))^b\,\varphi^*(|x|-\rho(t))\quad \text{for all $t\ge1$ and $|x|-\rho(t)>\sqrt{t}$}.
\ee
	
We now use the above bounds to estimate the value of the function $v$ defined by~\eqref{eqsum} at time~$t$ and position $(0,\rho(t)+y)$. For this, we call for short 
$$D_{t,y}(h,k):=|(-k,\rho(t)+y-\gamma(k)+h)|-\rho(t)=|(0,\rho(t)+y)-(k,\gamma(k)-h)|-\rho(t).$$
We derive from \eqref{x<sqrtt}-\eqref{x>sqrtt} that, for all $t\ge1$ and $y\in\R$,
\be\label{upperv}\baa{rcl}
v(t,0,\rho(t)+y) & \leq & \displaystyle\ C \underbrace{\sum_{(k,h)\in\Z^{N-1}\times \N\,:\, D_{t,y}(h,k)\leq\sqrt{t}}\varphi^*(D_{t,y}(h,k))}_{=:I_1(t,y)}\\
& & \displaystyle+\,C \underbrace{\sum_{(k,h)\in\Z^{N-1}\times \N\,:\, D_{t,y}(h,k)>\sqrt{t}}(D_{t,y}(h,k))^b\,\varphi^*(D_{t,y}(h,k))}_{=:I_2(t,y)}.\eaa
\ee

Let us first evaluate the quantity $I_1(t,y)$. From~\eqref{def:rhot}, let $\t t\ge1$ be such that $\rho(t)\ge0$ for all $t\ge\t t$. Using the fact $\gamma\le 0$ in $\Z^{N-1}$ (since $\beta<-2(N-1)/c^*<0$), we infer that, for every $t\ge\t t$ and~$y\geq 1$, one has $D_{t,y}(h,k)\ge1$ for all $(k,h)\in\Z^{N-1}\times\N$, whence, by~\eqref{varphi*},
$$I_1(t,y)\leq A\sum_{(k,h)\in\Z^{N-1}\times\N}\big(|(-k,\rho(t)+y-\gamma(k)+h)|-\rho(t)\big)\,e^{-c^*(|(-k,\rho(t)+y-\gamma(k)+h)|-\rho(t))/2}.$$
Because $r\mapsto r e^{-c^*r/2}$ is decreasing for $r\geq2/c^*$, we then deduce that, for every $t\ge \t t$  and every $y\geq \max(1,2/c^*)$ (recall that $\gamma\leq0$),
\[\begin{split}
I_1(t,y) &\leq\  A\sum_{(k,h)\in\Z^{N-1}\times\N}(y-\gamma(k)+h)\,e^{-c^*(y-\gamma(k)+h)/2}\\
&=\ A\,\Big(\sum_{k\in\Z^{N-1}}e^{c^*\gamma(k)/2}\Big)\Big(\sum_{h=0}^\infty(y+h)\,e^{-c^*(y+h)/2}\Big)\\
& \ \ \ \ +A\,\Big(\!\sum_{k\in\Z^{N-1}}|\gamma(k)|\,e^{c^*\gamma(k)/2}\Big)\Big(\sum_{h=0}^\infty e^{-c^*(y+h)/2}\Big).
\end{split}\]
As a consequence, calling
\be\label{defC12}
C_1:=\sum_{h=0}^\infty e^{-c^*h/2}=\frac{1}{1-e^{-c^*/2}}\ \hbox{ and }\ C_2:=\sum_{h=0}^\infty h e^{-c^*h/2}=\frac{e^{-c^*/2}}{(1-e^{-c^*/2})^2},
\ee
we find that, for every $t\ge\t t$ and every $y\geq \max(1,2/c^*)$,
\[I_1(t,y) \leq A(C_1y+C_2)e^{-c^*y/2}\sum_{k\in\Z^{N-1}}e^{c^*\gamma(k)/2}+AC_1 e^{-c^*y/2}\sum_{k\in\Z^{N-1}}|\gamma(k)|e^{c^*\gamma(k)/2}.\]
Let us study these series in $k$. Recalling that $\gamma(x')=\beta\log(1+|x'|)$, we compute
$$\sum_{k\in\Z^{N-1}}|\gamma(k)|e^{c^*\gamma(k)/2}=|\beta|\sum_{k\in\Z^{N-1}}(1+|k|)^{c^*\beta/2}\log(1+|k|).$$
We now use the fact that, for any pair of nonnegative functions $p,q:\R\to\R$, with $p$ nonincreasing and~$q$ nondecreasing, there holds that
$$\forall\,k\in\Z^{N-1},\quad p(|k|)\,q(|k|)\leq\int_{k+(0,1)^{N-1}}p(|x'|-\sqrt{N-1})\,q(|x'|+\sqrt{N-1})\,dx',$$
and therefore, for any measurable set $\mathcal{A}\subset\R^{N-1}$, we get
\Fi{sumint}
\sum_{k\in\Z^{N-1}\cap\mathcal{A}}p(|k|)\,q(|k|)\leq\int_{\mathcal{A}+B'_{\sqrt{N-1}}}p(|x'|-\sqrt{N-1})\,q(|x'|+\sqrt{N-1})\,dx'.
\Ff
By using $p(r)=(1+r^+)^{c^*\beta/2}$ and $q(r)=\log(1+r^+)$, this allows us to estimate 
\[\begin{split}
\sum_{k\in\Z^{N-1}\setminus B'_{2\sqrt{N-1}}}\!\!(1+|k|)^{c^*\beta/2}&\log(1+|k|)\\
&\leq\int_{|x'|\geq\sqrt{N-1}}(1+|x'|-\sqrt{N-1})^{c^*\beta/2}\log(1+|x'|+\sqrt{N-1})\,dx',
\end{split}\]
which is finite because $\beta<-2(N-1)/c^*$. This shows that $\sum_{k\in\Z^{N-1}}|\gamma(k)|e^{c^*\gamma(k)/2}$ converges, as well as~$\sum_{k\in\Z^{N-1}}e^{c^*\gamma(k)/2}$ (since $|\gamma(k)|\to+\infty$ as $|k|\to+\infty$). It follows that there exists a constant $C'>0$ such that
\be\label{I1}
\forall\,t\ge\t t,\ \forall\,y\geq \max(1,2/c^*),\ \ I_1(t,y)\leq C'y\,e^{-c^*y/2}.
\ee

We then estimate the term $I_2(t,y)$ in~\eqref{upperv}, for $t\ge1$ and $y\in\R$, using that
\be\label{I21}
\sum_{\substack{(k,h)\in\Z^{N-1}\times\N\,:\\ D_{t,y}(h,k)>\sqrt{t}}}\!\!\!\!\!\!\!(D_{t,y}(h,k))^b\,\varphi^*(D_{t,y}(h,k))\!\leq \!\!\!\sum_{\substack{n\in\N\,:\\ n\geq\sqrt{t}-1}}\,\sum_{\substack{(k,h)\in\Z^{N-1}\times\N\,:\\ n<D_{t,y}(h,k)\leq n+1}}\!\!\!\!\!\!\!\!(D_{t,y}(h,k))^b\,\varphi^*(D_{t,y}(h,k)).
\ee
Observing that the distance between any pair of distinct points of the form $(k,\gamma(k)-h)$ is larger than or equal to $1$, one infers that the ones satisfying $n<D_{t,y}(h,k)\leq n+1$ are at most $\Gamma_N\,n^{N-1}$, for some constant $\Gamma_N>0$ depending only on $N$. Together with the fact that $\varphi^*$ is decreasing, 
this yields
\be\label{I22}
0\le I_2(t,y)=\!\!\sum_{\substack{(k,h)\in\Z^{N-1}\times\N\,:\\ D_{t,y}(h,k)>\sqrt{t}}}\!\!(D_{t,y}(h,k))^b\,\varphi^*(D_{t,y}(h,k))\leq\Gamma_N\!\!\!\!\sum_{\substack{n\in\N\,:\\ n\geq\sqrt{t}-1}}n^{N-1}(n+1)^b\,\varphi^*(n),
\ee
from which, using~\eqref{varphi*}, one derives
\be\label{I2}
I_2(t,y)\to0\hbox{ as $t\to+\infty$ uniformly in $y\in\R$}.
\ee

Because $u\leq v$ by~\eqref{inequv1}, we eventually deduce from~\eqref{def:rhot},~\eqref{upperv},~\eqref{I1} and~\eqref{I2} that there are some positive constants $\t C$ and $\t y$ such that
\be\label{upperu}
\limsup_{t\to+\infty}\Big[\sup_{y\ge\t y}\Big(u\Big(t,0,c^*t-\frac{N+2}{c^*}\log t+y\Big)-\t C y\,e^{-c^*y/2}\Big)\Big]\le0.
\ee
Therefore, by the definition of $X_\lambda(t,0)$, there are some positive constants $C^*$ and $t^*$ such~that
$$X_\lambda(t,0)\leq c^*t-\frac{N+2}{c^*}\log t+C^*\ \text{ for all $t\ge t^*$}.$$
Finally, since $\gamma(x')=\beta\log(1+|x'|)$ is radially symmetric and decreasing (remember that~$\beta$ is here negative), it follows from a standard reflection argument with respect to hyperplanes parallel to $\mathrm{e}_N$ (similarly as in the proof of \thm{conical}) that, for every $t>0$ and~$x_N\in\R$, the function $x'\mapsto u(t,x',x_N)$ is radially symmetric and decreasing with respect to $|x'|$. We then deduce from the above estimate that
\Fi{Xl<}
X_\lambda(t,x')\leq c^*t-\frac{N+2}{c^*}\log t+C^*\ \text{ for all $x'\in\R^{N-1}$ and $t\ge t^*$,}
\Ff
which, together with~\eqref{Xl>}, completes the proof of Theorem~\ref{thm:normdelay}.
\end{proof}

The arguments employed in the above proof of the upper bound for $X_\lambda$ can be adapted to the case of functions $\gamma$ satisfying the logarithmic upper bound~\eqref{hyplag}, as we now show.

\begin{proof}[Proof of Proposition~$\ref{pro:lag}$] 
Throughout the proof, $\lambda$ is any fixed real number in $(0,1)$. Suppose firstly that $\gamma$ is precisely given by
\be\label{defgamma2}
\gamma(x')=\frac{2\sigma}{c^*}\,\log(1+|x'|)\quad \text{for all $x'\in\R^{N-1}$},
\ee
with $\sigma>-(N-1)$. Consider the same function $w$ as in the proof of Theorem~$\ref{thm:normdelay}$, depending on the parameter $\delta>0$. This function fulfills~\eqref{x<sqrtt}-\eqref{x>sqrtt} with $\rho(t)$ satisfying~\eqref{def:rhot}, for some positive constant~$C$. Moreover, since $\gamma$ is globally Lipschitz continuous, it fulfills condition~\eqref{cupcup} for $\delta>0$ sufficiently large, and thus, considering $w$ associated with such a value of $\delta$ and then defining~$v$ as in~\eqref{eqsum}, one has by comparison that~\eqref{inequv1} holds. Take an arbitrary quantity $\beta$ satisfying
\be\label{choicebeta}
\beta>\frac{\sigma+N-1}{c^*}>0.
\ee
Our aim is to show that
\be\label{claimv}
v(t,0,\rho(t)+\beta\log t)\to0\ \hbox{ as $t\to+\infty$}.
\ee

Let us postpone for a moment the proof of~\eqref{claimv} and conclude the argument. Together with~\eqref{inequv1}, this will imply that $u(t,0,\rho(t)+\beta\log t)\to0$ as $t\to+\infty$, hence~$X_\lambda(t,0)\le\rho(t)+\beta\log t$ for all $t$ large enough, and then by~\eqref{def:rhot},
$$\limsup_{t\to+\infty}\frac{X_\lambda(t,0)-c^*t}{\log t}\le\beta-\frac{N+2}{c^*}.$$
Since $\gamma$ is given by~\eqref{defgamma2}, we infer from~\thm{conical} with assumption~(iv) that the above estimate holds true for $X_\lambda(t,x')$, locally uniformly with respect to $x'\in\R^{N-1}$, and then~\eqref{ineqlag} follows from the arbitrariness of $\beta$ in~\eqref{choicebeta}. If we now consider a general~$\gamma$ satisfying~\eqref{hyplag} with $\sigma\ge-(N-1)$, we take an arbitrary $\sigma'>\sigma$ and then, since $\gamma$ satisfies $\gamma(x')<(2\sigma'/c^*)\log(1+|x'|)$ for $x'\in\R^{N-1}$ up  to an additive constant, we deduce from what precedes and the comparison principle, that~\eqref{ineqlag} holds with $\sigma$ replaced by $\sigma'$, locally uniformly with respect to $x'\in\R^{N-1}$. This gives the conclusion of the proposition, owing to the arbitrariness of~$\sigma'\in(\sigma,+\infty)$.

So, we are left to prove that~\eqref{claimv} holds with $\beta$ and $\sigma$ as in~\eqref{choicebeta}, when $\gamma$ is given by~\eqref{defgamma2}. Take $\alpha\in(1/2,1)$ close enough to $1/2$ in such a way that
\be\label{choicealpha}
\frac{2\alpha\sigma}{c^*}\leq\frac{2\alpha(\sigma+N-1)}{c^*}<\beta.
\ee
For every $t>0$, let us compute
\be\label{sum1}
0\le v(t,0,\rho(t)+\beta\log t)=\sum_{(k,h)\in\Z^{N-1}\times\N}w(t,-k,\rho(t)+\beta\log t-\gamma(k)+h).
\ee
We call for short 
$$E_{t}(h,k):=|(-k,\rho(t)+\beta\log t-\gamma(k)+h)|-\rho(t)=|(0,\rho(t)+\beta\log t)-(k,\gamma(k)-h)|-\rho(t).$$
We use~\eqref{x<sqrtt}-\eqref{x>sqrtt} and divide the sum~\eqref{sum1} into three subsums to get, for every $t\ge1$:
\be\label{sums}\baa{rcl}
0\,\le\,v(t,0,\rho(t)+\beta\log t) & \le & C\displaystyle\sum_{(k,h)\in(\Z^{N-1}\cap B'_{t^\alpha})\times\N\,:\,E_t(h,k)\le\sqrt{t}}\varphi^*(E_t(h,k))\\
& & \displaystyle+\,C\sum_{(k,h)\in(\Z^{N-1}\setminus B'_{t^\alpha})\times\N\,:\,E_t(h,k)\le\sqrt{t}}\varphi^*(E_t(h,k))\\
& & \displaystyle+\,C\sum_{(k,h)\in\Z^{N-1}\times\N\,:\,E_t(h,k)>\sqrt{t}}(E_t(h,k))^b\varphi^*(E_t(h,k))\\
& \le & C\displaystyle\underbrace{\sum_{(k,h)\in(\Z^{N-1}\cap B'_{t^\alpha})\times\N}\varphi^*(E_t(h,k))}_{=:J_1(t)}\\
& & \displaystyle+\,C\underbrace{\sum_{(k,h)\in(\Z^{N-1}\setminus B'_{t^\alpha})\times\N}\varphi^*(E_t(h,k))}_{=:J_2(t)}\\
& & \displaystyle+\,C\underbrace{\sum_{(k,h)\in\Z^{N-1}\times\N\,:\,E_t(h,k)>\sqrt{t}}(E_t(h,k))^b\varphi^*(E_t(h,k))}_{=:J_3(t)}.\eaa\ee

Let us first deal with the sum $J_1(t)$. Recalling that $\gamma$ is given in~\eqref{defgamma2}, one has
$$\forall\, t>0,\ \forall\, k\in\Z^{N-1}\cap B'_{t^\alpha},\quad\beta\log t-\gamma(k)\geq
\beta\log t-\frac{2\sigma}{c^*}\log(1+t^\alpha)=\frac{2\sigma}{c^*}\log\frac{t^{\frac{\beta c^*}{2\sigma}}}{1+t^\alpha}.$$
Hence, by~\eqref{choicealpha} and the positivity of $\alpha$, together with~\eqref{def:rhot}, there is~$t_1\geq1$ large enough such that $\rho(t)\ge0$ and $E_t(h,k)\geq\beta\log t-\gamma(k)+h\ge\max(1,2/c^*)$ for all $t\ge t_1$ and $(k,h)\in(\Z^{N-1}\cap B'_{t^\alpha})\times\N$. Therefore, we can use the estimate~\eqref{varphi*} in the expression of $J_1(t)$ for $t\ge t_1$, which, by the monotonicity of the function $r\mapsto r^{-c^*r/2}$ in $[2/c^*,+\infty)$, yields 
\be\label{J1t}\baa{rcl}
0\,\le\,J_1(t) & \!\!\!\le\!\!\! & \displaystyle A\sum_{k\in\Z^{N-1}\cap B'_{t^\alpha}}\sum_{h=0}^\infty(\beta\log t-\gamma(k)+h)\,e^{-c^*(\beta\log t-\gamma(k)+h)/2}\\
& \!\!\!\le\!\!\! & \displaystyle A\,(C_1\beta\log t+C_2)\,t^{-c^*\beta/2}\sum_{k\in\Z^{N-1}\cap B'_{t^\alpha}}e^{c^*\gamma(k)/2}\\
& \!\!\!\!\!\! & \displaystyle+\,A\,C_1\,t^{-c^*\beta/2}\sum_{k\in\Z^{N-1}\cap B'_{t^\alpha}}|\gamma(k)|\,e^{c^*\gamma(k)/2}\\
& \!\!\!\le\!\!\! & \displaystyle A\left(C_1\beta\log t+C_2+C_1\,\frac{2|\sigma|}{c^*}\log(1+t^\alpha)\right)
\,t^{-c^*\beta/2}\!\!\sum_{k\in\Z^{N-1}\cap B'_{t^\alpha}}\!\!\!\!(1+|k|)^\sigma,
\eaa\ee
where we have used the expression~\eqref{defgamma2} of $\gamma$, and $C_1,C_2$ are given in~\eqref{defC12}. We now estimate the above sum using~\eqref{sumint}, which, we recall, holds for any measurable set $\mc{A}$ and any nonnegative, nonincreasing function $p$ and nonnegative, nondecreasing function~$q$. We here use it with $p(s)=1$ and $q(s)=(1+s^+)^\sigma$ if $\sigma\ge0$, and with $p(s)=(1+s^+)^\sigma$ and~$q(s)=1$ if $\sigma<0$. We get, for some $C_N>0$ and with ``$\pm$'' in accordance with the sign of $\sigma$,
\begin{align*}
\sum_{k\in\Z^{N-1}\cap (B'_{t^\alpha}\setminus B'_{2\sqrt{N-1}})}(1+|k|)^\sigma &\le C_N\int_{\sqrt{N-1}}^{t^\alpha+\sqrt{N-1}}r^{N-2}(1+r\pm\sqrt{N-1})^{\sigma}\,dr\\
&\leq C_N\int_{0}^{t^\alpha+2\sqrt{N-1}}(r+\sqrt{N-1})^{N-2}(1+r)^{\sigma}\,dr\\
&\leq C_N(N-1)^{N/2-1}\int_{0}^{t^\alpha+2\sqrt{N-1}}(1+r)^{\sigma+N-2}\,dr.
\end{align*}
As a consequence, calling $C':=\sum_{k\in\Z^{N-1}\cap B'_{2\sqrt{N-1}}}(1+|k|)^\sigma$, since $\sigma+N-1>0$, we find 
$$\sum_{k\in\Z^{N-1}\cap B'_{t^\alpha}}(1+|k|)^\sigma\le\frac{C_N(N-1)^{N/2-1}}{\sigma+N-1}\,
(1+t^\alpha+2\sqrt{N-1})^{\sigma+N-1}+C'.$$
Together with~\eqref{choicealpha} and~\eqref{J1t}, one concludes that $J_1(t)\to0$ as $t\to+\infty$.

Let us then deal with the second sum $J_2(t)$ in~\eqref{sums}. For each $t\ge t_1\ge1$ and each $(k,h)\in(\Z^{N-1}\setminus B'_{t^\alpha})\times\N$, one has $\rho(t)\ge0$ and
\be\label{ineq61}\baa{l}
|(-k,\rho(t)+\beta\log t-\gamma(k)+h)|-\rho(t)\vspace{3pt}\\
\qquad\qquad\qquad\qquad=\displaystyle\frac{|k|^2+(\beta\log t-\gamma(k)+h)^2+2\rho(t)(\beta\log t-\gamma(k)+h)}{|(-k,\rho(t)+\beta\log t-\gamma(k)+h)|+\rho(t)}\vspace{3pt}\\
\qquad\qquad\qquad\qquad\ge\displaystyle\frac{|k|^2+h^2-2h\gamma(k)-2\gamma(k)(\beta\log t+\rho(t))}{|(-k,\rho(t)+\beta\log t-\gamma(k)+h)|+\rho(t)},\eaa
\ee
since $(\beta\log t)^2+\gamma(k)^2+2\beta h\log t+2\beta\rho(t)\log t+2h\rho(t)\ge0$ (remember that $\beta>0$ by~\eqref{choicebeta}). In order to estimate the numerator, we use the facts that $t_1\leq t\le|k|^{1/\alpha}$, with $0<1/\alpha<2$, and that $\rho(t)\sim c^*t$ as $t\to+\infty$ and~$|\gamma(k)|=O(\log|k|)$ as $|k|\to+\infty$. We infer the existence of some $t_2\ge t_1$ such that, for every~$t\ge t_2$ and $(k,h)\in(\Z^{N-1}\setminus B'_{t^\alpha})\times\N$,
\be\label{ineq62}
|k|^2+h^2-2h\gamma(k)-2\gamma(k)(\beta\log t+\rho(t))\ge\frac{|k|^2+h^2}{2}\ge0.
\ee
Using the same estimates and $1<1/\alpha$, one gets for the denominator, 
\be\label{ineq63}\baa{rcl}
0<|(-k,\rho(t)+\beta\log t-\gamma(k)+h)|+\rho(t) & \leq & {|k|+\beta\log t+|\gamma(k)|+h+2\rho(t)}\vspace{3pt}\\
& \le & 3c^*|k|^{1/\alpha}+h\le6c^*|(k,h)|^{1/\alpha},\eaa
\ee
for all $t$ larger than some $t_3\geq t_2$ and for all $(k,h)\in(\Z^{N-1}\setminus B'_{t^\alpha})\times\N$. Gathering the estimates~\eqref{ineq61}-\eqref{ineq63} and recalling that $1<1/\alpha<2$, one has that, for $t\geq t_3$ and $(k,h)\in(\Z^{N-1}\setminus B'_{t^\alpha})\times\N$, 
$$E_t(h,k)=|(-k,\rho(t)+\beta\log t-\gamma(k)+h)|-\rho(t)\ge\frac{|(k,h)|^{2-1/\alpha}}{12 c^*},$$
which is larger than $\max(1,2/c^*)$ for $t$ larger than some $t_4\geq t_3$, since $|k|\ge t^\alpha$. One eventually gets from~\eqref{varphi*} that, for every $t\ge t_4$,
$$\baa{rcl}
J_2(t) & \le & \displaystyle A\sum_{(k,h)\in(\Z^{N-1}\setminus B'_{t^\alpha})\times\N}\Big[\big(|(-k,\rho(t)+\beta\log t-\gamma(k)+h)|-\rho(t)\big)\\
& & \qquad\qquad\qquad\qquad\qquad\qquad\qquad\times\ e^{-c^*(|(-k,\rho(t)+\beta\log t-\gamma(k)+h)|-\rho(t))/2}\Big]\\
& \le & \displaystyle \frac A{12 c^*}\sum_{(k,h)\in(\Z^{N-1}\setminus B'_{t^\alpha})\times\N}|(k,h)|^{2-1/\alpha}\,e^{-(1/24)|(k,h)|^{2-1/\alpha}}\vspace{3pt}\\
& \le & \displaystyle\frac A{12 c^*}\sum_{(k,h)\in(\Z^{N-1}\setminus B'_{t^\alpha})\times\Z}|(k,h)|^{2-1/\alpha}\,e^{-(1/24)|(k,h)|^{2-1/\alpha}}.\eaa$$
We then use an estimate of the type~\eqref{sumint} in dimension $N$, and the inequality $2-1/\alpha>0$, to infer that $J_2(t)\to0$ as~$t\to+\infty$.

Let us finally deal with the last sum $J_3(t)$ in~\eqref{sums}. This sum $J_3(t)$ can actually be estimated similarly as $I_2(t,y)$ in~\eqref{I21}-\eqref{I22}, and one gets that $J_3(t)\to0$ as $t\to+\infty$.

As a conclusion, $v(t,0,\rho(t)+\beta\log t)\to0$ as $t\to+\infty$. The claim~\eqref{claimv} has been shown, and, as already emphasized, this completes the proof of Proposition~\ref{pro:lag}.
\end{proof}

\begin{proof}[Proof of Corollary~$\ref{cor:DGlag}$]
(i) On the one hand, we know from~\eqref{c<c*} that
$$X_\lambda(t,x')\to+\infty\ \hbox{ as }t\to+\infty,$$
locally uniformly in $\lambda\in[0,1)$ and $x'\in \R^{N-1}$. On the other hand, it follows from~\cite[Theorem~7.2]{HR2} that
$$\nabla_{\!x'}u(t,x',x_N)\to0\ \hbox{ as }t\to+\infty,\text{ locally in $x'\in\R^{N-1}$ and uniformly in $x_N\in\R$}.$$
Then, owing to~\eqref{DchiDu}, in order to show that~\eqref{limit2} holds locally uniformly in $\lambda\in(0,1)$, it is sufficient to derive a lower bound on $|\partial_{x_N}u|$ on the level sets. Namely, if we assume that~\eqref{limit2} does not hold locally uniformly in $\lambda\in(0,1)$, then there necessarily exist a sequence $(\lambda_n)_{n\in\N}$ contained in some interval $[\ul\lambda,\ol\lambda]$ with $0<\ul\lambda<\ol\lambda<1$, a sequence $(t_n)_{n\in\N}$ in $(0,+\infty)$ diverging to $+\infty$ and a bounded sequence $(x'_n)_{n\in\N}$ in $\R^{N-1}$ such that
$$\partial_{x_N}u\big(t_n,x'_n,X_{\lambda_n}(t_n,x'_n)\big)\to0\as n\to+\infty.$$
Since the function $\partial_{x_N} u$ is a negative solution of a linear parabolic equation in $(0,+\infty)\times\R^N$, it readily follows from the strong maximum principle and parabolic estimates, as in the proof of Proposition~\ref{cor:flatteningKPP}, that
$$\partial_{x_N}u\big(t_n,x',x_N+X_{\lambda_n}(t_n,x'_n)\big)\to0\ \hbox{ as }n\to+\infty,\text{ locally uniformly in $(x',x_N)\in\R^N$}.$$
Writing 
$$\ul\lambda-\ol\lambda=\int_{X_{\ol\lambda}(t_n,x'_n)}^{X_{\ul\lambda}(t_n,x'_n)}\partial_{x_N}u(t_n,x'_n,x_N)\,dx_N$$
and observing that $X_{\ol\lambda}(t_n,x'_n)\le X_{\lambda_n}(t_n,x'_n)\le X_{\ul\lambda}(t_n,x'_n)$, one deduces from the above convergence that $X_{\ul\lambda}(t_n,x'_n)-X_{\ol\lambda}(t_n,x'_n)\to+\infty$ as $n\to+\infty$. This is impossible because $X_{\ul\lambda}(t,x')-X_{\ol\lambda}(t,x')$ is bounded uniformly in $t$ large enough and locally in $x'$ thanks to Theorem~\ref{thm:normdelay}. We have reached a contradiction, and the desired property follows.
	
(ii) By standard parabolic estimates, for given $\lambda\in(0,1)$ and $x_0'\in\R^{N-1}$ and any sequence $(s_n)_{n\in\N}$ diverging to $+\infty$, the limit
$$\t u(t,x',x_N):=\lim_{n\to+\infty}u(s_n+t,x',X_\lambda(s_n,x'_0)+x_N),$$
exists (up to subsequences) in $C^{1;2}_{loc}(\R\times\R^N)$. We know from the statement (i) above that~$\t u(t,x',x_N)$ is independent of $x'$, i.e., $\t u=\t u(t,x_N)$. We apply the estimates derived in the proof of Theorem~\ref{thm:normdelay}. Namely, by~\eqref{Xl>},~\eqref{Xl<}, for any $\eta\in(0,1)$, there exist $C_\eta>0$ such that, for any $x'\in\R^{N-1}$,
$$\Big|X_\eta(t,x')-\Big(c^*t-\frac{N+2}{c^*}\log t\Big)\Big|\leq C_\eta+o(1) \as t\to+\infty.$$ 
We deduce that, for any $\eta_1,\eta_2\in(0,1)$, any $t\in\R$, and any $x'_1,x'_2\in\R^{N-1}$,
\begin{align*}
|X_{\eta_1}(s,x'_1)+c^*t-X_{\eta_2}(s+t,x'_2)| &\leq\frac{N+2}{c^*}\,|\log s-\log(s+t)|+C_{\eta_1}+C_{\eta_2}+o(1)\\
& \leq C_{\eta_1}+C_{\eta_2}+o(1)\as s\to+\infty.
\end{align*}
It follows, for any $\eta\in(0,1)$ and any $t\in\R$, $x'\in\R^{N-1}$, from the one hand that
\begin{align*}
\t u(t,c^*t+C_{\lambda}+C_{\eta}+1) &=\lim_{n\to+\infty}u(s_n+t,x',X_\lambda(s_n,x'_0)+c^*t+C_{\lambda}+C_{\eta}+1)\\
&\leq \lim_{n\to+\infty}u(s_n+t,x',X_\eta(s_n+t,x'))=\eta,
\end{align*} 
and from the other hand that
$$\t u(t,c^*t-C_{\lambda}-C_{\eta}-1)\geq \lim_{n\to+\infty}u(s_n+t,x',X_\eta(s_n+t,x'))=\eta.$$
Owing to the arbitrariness of $\eta\in(0,1)$, and the fact that $\t u$ is nonincreasing with respect to~$x_N$ (as so is~$u$), the proof of Corollary~\ref{cor:DGlag} is complete.
\end{proof}



\begin{thebibliography}{1}
		
{\small{		

		\bibitem{AHR} M. Alfaro, F. Hamel, and L. Roques.
		\newblock Propagation or extinction in bistable equations: the non-monotone role 
		of initial fragmentation.
		\newblock {\em https://arxiv.org/abs/2304.00828}.
		
		\bibitem{AC}
		L. Ambrosio and X. Cabr\'e.
		\newblock Entire solutions of semilinear elliptic equations in $\R^3$ and a conjecture of De Giorgi.
		\newblock {\em J. Amer. Math. Soc.}, 13:725--739, 2000.

		\bibitem{AW}
		D.~G. Aronson and H.~F. Weinberger.
		\newblock Multidimensional nonlinear diffusion arising in population genetics.
		\newblock {\em Adv. Math.}, 30:33--76, 1978.
		
		\bibitem{BCN}
		H. Berestycki, L. Caffarelli, and L. Nirenberg.
		\newblock Further qualitative properties for elliptic equations in unbounded domains.
		\newblock {\em Ann. Scuola Norm. Sup. Pisa Cl. Sci. (4)}, 5:69--94, 1997.

		\bibitem{BH1}
		H.~Berestycki and F.~Hamel.
		\newblock Generalized travelling waves for reaction-diffusion equations.
		\newblock In {\em Perspectives in nonlinear partial differential equations},
		volume 446 of {\em Contemp. Math.}, pages 101--123, Amer. Math. Soc., Providence, RI, 2007.

		\bibitem{BH2}
		H.~Berestycki and F.~Hamel.
		\newblock Generalized transition waves and their properties.
		\newblock {\em Comm. Pure Appl. Math.}, 65:592--648, 2012.

		\bibitem{BHN} H. Berestycki, F. Hamel, and G. Nadin.
		\newblock Asymptotic spreading in heterogeneous diffusive media.
		\newblock {\em J.~Funct. Anal.}, 255:2146--2189, 2008.

		\bibitem{BHN1}
		H.~Berestycki, F.~Hamel, and N.~Nadirashvili.
		\newblock The speed of propagation for {KPP} type problems. {I}. {P}eriodic framework.
		\newblock {\em J. Europ. Math. Soc.}, 7:173--213, 2005.

		\bibitem{BHN2}
		H.~Berestycki, F.~Hamel, and N.~Nadirashvili.
		\newblock The speed of propagation for {KPP} type problems. {II}. {G}eneral domains.
		\newblock {\em J. Amer. Math. Soc.}, 23:1--34, 2010.
		
		\bibitem{B}
		M. Bramson.
		\newblock {\em Convergence of solutions of the Kolmogorov equation to travelling waves}, 
		volume 44 of {\em Memoirs Amer. Math. Soc.}, 1983.
		
		\bibitem{dGconj}
		E.~De~Giorgi.
		\newblock Convergence problems for functionals and operators.
		\newblock In {\em Proceedings of the {I}nternational {M}eeting on {R}ecent
		{M}ethods in {N}onlinear {A}nalysis ({R}ome, 1978)}, pages 131--188. Pitagora, Bologna, 1979.

		\bibitem{DKW} 
		M. Del Pino, M. Kowalczyk, and J. Wei.
		\newblock On the De Giorgi conjecture in dimension $N\ge 9$.
		\newblock {\em Ann. Math.}, 174:1485--1569, 2011.

		\bibitem{DLL} 
		W. Ding, Z. Liang, and W. Liu.
		\newblock Continuity of pulsating wave speeds for bistable reaction-diffusion equations in 
		spatially periodic media.
		\newblock {\em J.~Math. Anal. Appl.}, 519:126794, 2023.
		
		\bibitem{DM1} 
		Y. Du and H. Matano.
		\newblock Convergence and sharp thresholds for propagation in nonlinear diffusion problems.
		\newblock {\em J.~Europ. Math. Soc.}, 12:279--312, 2010.
		
		\bibitem{DM2}
		Y. Du and H. Matano.
		\newblock Radial terrace solutions and propagation profile of multistable 
		reaction-diffusion equations over $\R^N$.
		\newblock {\em https://arxiv.org/pdf/1711.00952}.
		
		\bibitem{DP}
		Y. Du and P. Pol\'a\v{c}ik.
		\newblock Locally uniform convergence to an equilibrium for nonlinear parabolic 
		equations on $\R^N$.
		\newblock {\em Indiana Univ. Math.~J.}, 64:787--824, 2015.

		\bibitem{DR}
		R.~Ducasse and L.~Rossi.
		\newblock Blocking and invasion for reaction-diffusion equations in periodic media.
		\newblock {\em Calc. Var. Part. Diff. Equations}, 57:142, 2018.
		
		\bibitem{D}
		A. Ducrot.
		\newblock On the large time behaviour of the multi-dimensional Fisher-KPP equation 
		with compactly supported initial data.
		\newblock {\em Nonlinearity}, 28:1043--1076, 2015.
		
		\bibitem{DGM}
		A. Ducrot, T. Giletti, and H. Matano.
		\newblock Existence and convergence to a propagating terrace in one-dimensional 
		reaction-diffusion equations.
		\newblock {\em Trans. Amer. Math. Soc.}, 366:5541--5566, 2014.
				
		\bibitem{FM}
		P. C. Fife and J. B. McLeod.
		\newblock The approach of solutions of nonlinear diffusion equations to traveling front solutions.
		\newblock {\em Arch. Ration. Mech. Anal.}, 65:335--361, 1977.
		
		\bibitem{F} R.A. Fisher.
		\newblock The advance of advantageous genes.
		\newblock {\em Ann. Eugenics}, 7:335--369, 1937.
		
		\bibitem{FG} 
		M. Freidlin and J. G\"artner.
		\newblock On the propagation of concentration waves in periodic and random media.
		\newblock {\em Sov. Math. Dokl.}, 20:1282--1286, 1979.

		\bibitem{GRH}
		J. Garnier, L. Roques, and F. Hamel.
		\newblock Success rate of a biological invasion in terms of the spatial distribution of the 
		founding population.
		\newblock {\em Bull. Math. Biol.}, 74:453--473, 2012.

		\bibitem{G}
		J. G\"artner.
		\newblock Location of wave fronts for the multi-dimensional KPP equation and Brownian 
		first exit densities.
		\newblock {\em Math. Nachr.}, 105:317--351, 1982.

		\bibitem{GG}
		N. Ghoussoub and C. Gui.
		\newblock On a conjecture of De Giorgi and some related problems.
		\newblock {\em Math. Ann.}, 311:481--491, 1998.
		
		\bibitem{GR}
		T.~Giletti and L.~Rossi.
		\newblock Pulsating solutions for multidimensional bistable and multistable equations.
		\newblock {\em Math. Ann.}, 378:1555--1611, 2020.
			
		\bibitem{HMR1} F. Hamel, R.~Monneau, and J.-M.~Roquejoffre.
		\newblock Existence and qualitative properties of multidimensional conical bistable fronts.
		\newblock {\em Disc. Cont. Dyn. Syst. A}, 13:1069--1096, 2005.

		\bibitem{HMR2} F. Hamel, R.~Monneau, and J.-M.~Roquejoffre.
		\newblock Asymptotic properties and classification of bistable fronts with Lipschitz level sets.
		\newblock {\em Disc. Cont. Dyn. Syst. A}, 14:75--92, 2006.
		
		\bibitem{HN} F. Hamel and N. Nadirashvili.
		\newblock Travelling waves and entire solutions of the Fisher-KPP equation in $\R^N$.
		\newblock {\em Arch. Ration. Mech. Anal.}, 157:91--163, 2001.

		\bibitem{HNRR}
		F. Hamel, J. Nolen, J.-M. Roquejoffre, and L. Ryzhik.
		\newblock A short proof of the logarithmic Bramson correction in Fisher-KPP equations.
		\newblock {\em Netw. Heterog. Media}, 8:275--289, 2013.

		\bibitem{HR1} F. Hamel and L. Rossi.
		\newblock Spreading speeds and spreading sets of~reaction-diffusion equations.
		\newblock {\em https://arxiv.org/abs/2105.08344}.

		\bibitem{HR2} F. Hamel and L. Rossi.
		\newblock Asymptotic one-dimensional symmetry for the Fisher-KPP equation.
		\newblock {\em https://arxiv.org/abs/2207.05147}.

		\bibitem{J}
		C. K. R. T. Jones.
		\newblock Spherically symmetric solutions of a reaction-diffusion equation.
		\newblock {\em J. Diff. Equations}, 49:142--169, 1983.
		
		\bibitem{K}
		J. I. Kanel'.
		\newblock Stabilization of the solutions of the equations of combustion theory with finite initial functions.
		\newblock {\em Mat. Sb. (N.S.)}, 65:398--413, 1964.
		
		\bibitem{KPP}
		A.~N. Kolmogorov, I.~G. Petrovsky, and N.~S. Piskunov.
		\newblock \'Etude de l'\'equation de la diffusion avec croissance de la quantit\'e de 
		mati\`ere et son application \`a un probl\`eme biologique.
		\newblock {\em Bull. Univ. \'Etat Moscou, S\'er. Intern. A}, 1:1-26, 1937.
		
		\bibitem{L}
		K.-S. Lau.
		\newblock On the nonlinear diffusion equation of {Kolmogorov, Petrovsky, and Piscounov}.
		\newblock {\em J. Diff. Equations}, 59:44--70, 1985.
		
		\bibitem{LK}
		M. A. Lewis and P. Kareiva.
		\newblock Allee dynamics and the spread of invading organisms.
		\newblock {\em Theo. Pop. Biol.}, 43:141-158, 1993.
		
		\bibitem{MN}
		H. Matano and M. Nara.
		\newblock Large time behavior of disturbed planar fronts in the {Allen-Cahn} equation.
		\newblock {\em J. Diff. Equations}, 251:3522--3557, 2011.
		
		\bibitem{MNT}
		H. Matano, M. Nara, and M. Taniguchi.
		\newblock Stability of planar waves in the {Allen-Cahn} equation.
		\newblock {\em Comm. Part. Diff. Equations}, 34:976--1002, 2009.		
		
		\bibitem{MP1}
		H. Matano and P. Pol\'a\v{c}ik.
		\newblock Dynamics of nonnegative solutions of one-dimensional reaction-diffusion equations with 
		localized initial data. Part~I: A general quasiconvergence theorem and its consequences.
		\newblock {\em Comm. Part. Diff. Equations}, 41:785--811, 2016.
		
		\bibitem{MP2}
		H. Matano and P. Pol\'a\v{c}ik.
		\newblock Dynamics of nonnegative solutions of one-dimensional reaction-diffusion 
		equations with localized initial data. Part~II: Generic nonlinearities.
		\newblock {\em Comm. Part. Diff. Equations}, 45:483--524, 2020.
		
		\bibitem{MZ1} 
		C. B. Muratov and X. Zhong. 
		\newblock Threshold phenomena for symmetric decreasing solutions of reaction-diffusion equations.
		\newblock {\em Nonlin. Diff. Equations Appl.}, 20:1519--1552, 2013.
		
		\bibitem{MZ2} 
		C. B. Muratov and X. Zhong.
		\newblock Threshold phenomena for symmetric-decreasing radial solutions of 
		reaction-diffusion equations.
		\newblock {\em Disc. Cont. Dyn. Syst. A}, 37:915--944, 2017.
		
		\bibitem{N}
		G. Nadin.
		\newblock Critical travelling waves for general heterogeneous one-dimensional 
		reaction-diffusion equations.
		\newblock {\em Ann. Inst. H.~Poincar\'e, Anal. Non Lin\'eaire}, 32:841--873, 2015.

		\bibitem{NT}
		H. Ninomiya and M. Taniguchi,
		\newblock Existence and global stability of traveling curved fronts in the Allen-Cahn 
		equations.
		\newblock {\em J. Diff. Equations}, 213:204--233, 2005.

		\bibitem{NRR}
		J. Nolen, J.-M. Roquejoffre, and L. Ryzhik.
		\newblock Convergence to a single wave in the Fisher-KPP equation.
		\newblock {\em Chinese Ann. Math. Ser. B (special issue in honour of H. Brezis)}, 
		38:629--646, 2017.
		
		\bibitem{P0} P. Pol\'a{\v{c}}ik.
		\newblock Threshold solutions and sharp transitions for nonautonomous 
		parabolic equations on $\R^N$.
		\newblock {\em Arch. Ration. Mech. Anal.}, 199:69--97, 2011.
		
		\bibitem{P1}
		P. Pol\'a{\v{c}}ik.
		\newblock Convergence and quasiconvergence properties of solutions of parabolic 
		equations on the real line: an overview.
		\newblock In: {\em Patterns of Dynamics}, volume 205 of {\em Springer Proc. Math. Stat.}, 
		pages 172--183, Springer, 2017.

		\bibitem{P2}
		P.~Pol{\'a}{\v{c}}ik.
		\newblock Planar propagating terraces and the asymptotic one-dimensional
		symmetry of solutions of semilinear parabolic equations.
		\newblock {\em SIAM J. Math. Anal.}, 49:3716--3740, 2017.
		
		\bibitem{P3}
		P.~Pol\'{a}\v{c}ik.
		\newblock Propagating terraces and the dynamics of front-like solutions of
		reaction-diffusion equations on {$\mathbb{R}$}.
		\newblock {\em Mem. Amer. Math. Soc.}, 264(1278):v+87, 2020.
				
		\bibitem{RRR}
		J.-M. Roquejoffre, L. Rossi, and V. Roussier-Michon.
		\newblock Sharp large time behaviour in $N$-dimensional Fisher-KPP equations.
		\newblock {\em Disc. Cont. Dyn. Syst. A}, 39:7265--7290, 2019.
		
		\bibitem{RR1}
		J.-M. Roquejoffre and V. Roussier-Michon.
		\newblock Nontrivial large-time behaviour in bistable reaction-diffusion equations.
		\newblock {\em Ann. Mat. Pura Appl.}, 188:207--233, 2009.

		\bibitem{RR2}
		J.-M. Roquejoffre and V. Roussier-Michon.
		\newblock Nontrivial dynamics beyond the logarithmic shift in two-dimensional Fisher-KPP equations.
		\newblock {\em Nonlinearity}, 31:3284--3307, 2018.

		\bibitem{R1}
		L.~Rossi.
		\newblock The {F}reidlin-{G}{\"a}rtner formula for general reaction terms.
		\newblock {\em Adv. Math.}, 317:267--298, 2017.
		
		\bibitem{R2}
		L. Rossi.
		\newblock Symmetrization and anti-symmetrization in reaction-diffusion equations.
		\newblock {\em Proc. Amer. Math. Soc.}, 145:2527--2537, 2017.
		
		\bibitem{R3}
		L.~Rossi.
		\newblock Stability analysis for semilinear parabolic problems in general unbounded domains.
		\newblock {\em J. Funct. Anal.}, 279:108657, 2020.

		\bibitem{R} 
		V. Roussier.
		\newblock Stability of radially symmetric travelling waves in reaction-diffusion equations.
		\newblock {\em Ann. Inst. H.~Poincar\'e, Anal. Non Lin\'eaire}, 21:341--379, 2004.
		
		\bibitem{S} 
		O. Savin.
		\newblock Regularity of flat level sets in phase transitions.
		\newblock {\em Ann. Math.}, 169:41--78, 2009.

		\bibitem{U1} 
		K. Uchiyama.
		\newblock The behavior of solutions of some semilinear diffusion equation for large time.
		\newblock {\em J.~Math. Kyoto Univ.}, 18:453--508, 1978.

		\bibitem{U2} 
		K. Uchiyama.
		\newblock Asymptotic behavior of solutions of reaction-diffusion equations with varying 
		drift coefficients.
		\newblock {\em Arch. Ration. Mech. Anal.}, 90:291--311, 1985.
		
		\bibitem{W}
		H. F. Weinberger.
		\newblock On spreading speeds and traveling waves for growth and migration in periodic habitat.
		\newblock {\em J.~Math. Biol.}, 45:511--548, 2002.
		
		\bibitem{Z}
		A. Zlato{\v{s}}.
		\newblock Sharp transition between extinction and propagation of reaction.
		\newblock {\em J.~Amer. Math. Soc.}, 19:251-263, 2006.

}}		
	\end{thebibliography}
\end{document}